\documentclass{article}

\usepackage[twoside,
paperwidth=210mm,
paperheight=297mm,
textheight=622pt,
textwidth=468pt,
centering,
headheight=50pt,
headsep=12pt,
footskip=18pt,
footnotesep=24pt plus 2pt minus 12pt,
columnsep=2pc]{geometry}

\usepackage[auth-sc]{authblk}

\usepackage{mathtools}

\allowdisplaybreaks

\usepackage{amssymb}
\usepackage[mathscr]{eucal}

\usepackage{mismath}

\usepackage{hyperref}

\usepackage{bm} 

\usepackage{graphicx}
\graphicspath{{figs/}}

\usepackage{tikz}
\usetikzlibrary{math}
\usetikzlibrary{calc}

\usepackage{subcaption}

\usepackage{multirow}

\usepackage{makecell}
\setcellgapes{2pt}

\usepackage{booktabs}

\usepackage{siunitx}
\usepackage{algorithm}
\usepackage{algpseudocode}

\usepackage[skins]{tcolorbox}
\newtcolorbox{stepbox}[2][]{%
  enhanced,
  attach boxed title to top center={yshift=-3mm,yshifttext=-1mm},
  colframe=blue!75!black,
  colbacktitle=red!80!black,
  fonttitle=\bfseries,
  title=#2,#1
}

\usepackage{enumitem}   

\usepackage{amsmath, amssymb, amsthm, bm, stmaryrd, mathrsfs, amsfonts}
\SetSymbolFont{stmry}{bold}{U}{stmry}{m}{n} 
\usepackage{geometry}
\usepackage{graphicx}
\usepackage{hyperref}
\hypersetup{hidelinks}
\graphicspath{{Figures/}}
\DeclareMathOperator{\as}{as}

\usepackage[capitalise]{cleveref}

\newtheorem{theorem}{Theorem}[section]
\newtheorem{lemma}[theorem]{Lemma}
\newtheorem{remark}[theorem]{Remark}
\newtheorem{proposition}[theorem]{Proposition}

\newcommand{\jump}[1]{\llbracket #1 \rrbracket}  
\newcommand{\spint}[2]{(\,#1\,,\,#2\,)}         

\providecommand{\keywords}[1]{\textbf{\textit{Keywords---}} #1} 

\crefname{equation}{}{}

\usepackage{lineno}
\usepackage[numbers,sort&compress]{natbib}

\numberwithin{figure}{section}
\numberwithin{table}{section}

\title{A time explicit multiscale framework for heterogeneous elastic wave propagation based on multipoint stress discretization}
\author[1]{Xiang Zhong}
\author[1]{Eric T. Chung}
\author[2]{Shubin Fu\thanks{Corresponding author. (Email address: \href{mailto:sfu@eitech.edu.cn}{sfu@eitech.edu.cn})}}
\affil[1]{Department of Mathematics, The Chinese University of Hong Kong, Shatin, Hong~Kong~SAR, China.}
\affil[2]{Eastern Institute of Technology, Ningbo, Zhejiang, 315200, China.}
\date{}

\begin{document}
\maketitle
\begin{abstract}
  Simulating elastic wave propagation in heterogeneous media presents significant mathematical and computational challenges, since resolving fine-scale material variations requires extremely fine spatial discretizations, while mixed stress--displacement formulations typically lead to large saddle-point systems that are not well suited for efficient time integration. Moreover, conventional multiscale reductions generally produce nontrivial coarse mass matrices, thereby requiring additional linear solves at every time step and diminishing the advantages of explicit schemes. To address these difficulties, we develop a time explicit multiscale method based on a multipoint stress control volume discretization. The central innovation is a unified spatial--temporal reduction strategy that simultaneously removes two dominant algebraic bottlenecks: the complex global fine-scale mixed solve and the repeated coarse-scale mass solve arising in conventional multiscale time stepping. More specifically, the stress and auxiliary rotation variables are eliminated locally, leading to a symmetric positive definite reduced stiffness operator, upon which a multiscale space is constructed using a density-weighted local spectral problem. The associated projected mass bilinear form preserves the physical mass inner product and gives an identity mass matrix under a density-orthonormal auxiliary basis, leading naturally to an explicit central-difference scheme. We establish discrete energy stability under a coarse-scale CFL condition and derive convergence estimates for both the density-weighted displacement and the locally recovered stress. Numerical experiments in heterogeneous media validate the theoretical results and demonstrate the effectiveness of the proposed method for elastic wave propagation.

\end{abstract}

\keywords{elastic wave propagation, heterogeneous media, multiscale method, explicit time integration, multipoint stress discretization} 

\section{Introduction}
Elastic wave propagation in heterogeneous media is central to seismic imaging,
nondestructive testing, composite-material design, and structural health
monitoring. Fine-scale variations of the density and elastic moduli generate
scattering, mode conversion, and localized wave patterns that must be represented
by the spatial discretization. A conventional finite element or finite difference method must therefore resolve the multiple spatial scales introduced by material heterogeneity and the range of wavelengths present in the wave field, while the associated temporal frequencies impose corresponding restrictions on the time-step size. In three dimensions, meeting these simultaneous
resolution requirements produces very large
algebraic systems; for explicit time integration it also imposes a restrictive
fine-grid Courant--Friedrichs--Lewy (CFL) condition. These difficulties become
especially severe for heterogeneous media and long-time simulations.

Mixed stress--displacement formulations constitute a classical framework for
linear elasticity, providing direct stress approximations and robustness in the
nearly incompressible regime
\cite{BrezziFortin1991,BoffiBrezziFortin2013,Falk2008,AdamsCockburn2005,
ArnoldWinther2002,ArnoldAwanouWinther2008,ArnoldFalkWinther2007,
CockburnGopalakrishnanGuzman2010,GopalakrishnanGuzman2012,ZhongQiu2023}. Strongly symmetric
formulations constrain the discrete stress pointwise to an $H(\divg)$-conforming
space of symmetric tensors, thereby preserving stress symmetry exactly;
representative constructions include the Arnold--Winther elements and their
three-dimensional counterparts
\cite{ArnoldWinther2002,AdamsCockburn2005,ArnoldAwanouWinther2008,ZhongQiu2023}. Weakly
symmetric formulations instead approximate the stress in full matrix-valued
$H(\divg)$ spaces and enforce symmetry variationally through an auxiliary
rotation acting as a Lagrange multiplier, allowing simpler stable elements
\cite{ArnoldFalkWinther2007,CockburnGopalakrishnanGuzman2010,
GopalakrishnanGuzman2012}. Although mixed formulations provide accurate stresses,
standard discretizations generally lead to large saddle-point systems.
Multipoint stress mixed finite element methods alleviate this cost by localizing
the stress mass matrix, permitting the local elimination of stress and rotation
and yielding a cell-centered displacement system
\cite{AmbartsumyanEtAl2020,AmbartsumyanEtAl2021}. More recently, Fu and Zhao
introduced a multipoint stress control volume method using piecewise constant
approximations and no special quadrature rule \cite{Fu2025}. With a suitable
rotation space, both stress and rotation can be eliminated locally, yielding a
symmetric positive definite displacement system while retaining local
conservation and an inexpensive stress recovery. This structure is well suited
to time-dependent elasticity, but a fine-grid solve remains expensive when the
medium contains many unresolved scales.

Multiscale methods address the spatial complexity by incorporating fine-scale
coefficient information into coarse basis functions. 
Existing multiscale model reduction methodologies include multiscale finite element methods (MsFEM) \cite{YTY2009,HouWu1997}, generalized multiscale finite element method
(GMsFEM) \cite{EfendievGalvisHou2013}, localized orthogonal decomposition method (LOD) \cite{AMDP2014}, variational multiscale finite element methods \cite{HFJL1998}, numerical upscaling \cite{PDR2016}, heterogeneous multiscale methods \cite{EW2003,EWBE2003,BEYH2005} and so on. Building on GMsFEMs and LOD, the constraint energy-minimizing GMsFEM (CEM-GMsFEM) constructs localized multiscale basis functions over oversampled regions and provides robust approximations for high-contrast media \cite{Eric2018,EricYETY2023}. These ideas have been developed for elasticity
and elastic waves in several forms, including GMsFEM for heterogeneous anisotropic
elastic-wave propagation \cite{GaoFuGibsonChungEfendiev2015}, high-order
multiscale bases for time-domain elastic waves \cite{FuGaoChung2019}, and
locking-free multiscale mixed formulations for high-contrast elasticity
\cite{chung2025locking,ChungKimZhong2026}.

To fully exploit the efficiency of the fine-scale multipoint discretization and the subsequent multiscale reduction proposed above, the computational cost of time integration must also be addressed. Spectral multiscale spaces coupled by a
discontinuous Galerkin formulation can produce block-diagonal mass matrices and
support explicit time stepping \cite{ChungEfendievLeung2014}. Localized
orthogonal decomposition has likewise been used to reduce spatial complexity and
relax the fine-scale CFL restriction \cite{MaierPeterseim2019}. Within the CEM
framework, an explicit and energy-conserving Petrov--Galerkin method was developed
for the scalar wave equation \cite{Cheung2021}. Nevertheless, a standard Galerkin reduction generally
produces a non-diagonal coarse mass matrix, whose repeated inversion diminishes
the benefit of an explicit scheme. Moreover, existing explicit multiscale wave
methods do not simultaneously exploit a locally condensed multipoint stress
formulation, preserve the physical density inner product at the reduced level,
and provide local recovery of the stress. Bridging these features is the main
objective of the present work.

We develop a time explicit multiscale method for heterogeneous elastic
waves. The starting point is a multipoint stress control volume discretization of
the mixed stress--displacement--rotation system. Local elimination of stress and
rotation gives a displacement-only fine-scale equation with a symmetric positive
definite stiffness bilinear form. We then define a $\rho$-weighted ($\rho$ is the density) auxiliary
spectral problem on each coarse block and construct localized multiscale trial functions
on oversampled regions. The density-orthogonal projection and the associated
mass bilinear form
make the coarse mass matrix the identity when the auxiliary basis is
$\rho$-orthonormal. Consequently, the reduced coefficients are advanced by a
central-difference update without solving a linear system at each time step.

The main contributions are summarized as follows.
\begin{enumerate}
  \item The main novelty is a unified spatial--temporal reduction framework that
simultaneously simplifies the mixed spatial discretization and the time
evolution. Spatially, local multipoint stress condensation and multiscale model
reduction reduce the globally evolved unknowns to the cell-centered displacement
while retaining inexpensive local stress recovery. Temporally, a
density-adapted Petrov--Galerkin projection renders the physical coarse-scale
mass matrix exactly the identity, enabling genuinely explicit time stepping
without repeated mass solves. This co-design therefore avoids both a global
mixed solve and repeated coarse-scale linear solves.
  \item We establish a discrete energy estimate under a coarse-scale CFL
  condition and derive convergence estimates for both the density-weighted
  displacement and the locally recovered stress. 
  \item Numerical experiments for binary scattering and strongly correlated
  random media demonstrate the accuracy and effectiveness of the method.
\end{enumerate}

The remainder of the paper is organized as follows. Section~\ref{sec: Preliminaries} introduces the
elastic-wave model, the multipoint stress control volume discretization, and the
reduced displacement formulation. Section~\ref{sec:multiscale} presents the density-weighted
spectral problem, the localized multiscale basis, and the explicit scheme.
Section~\ref{sec:analysis} establishes stability and convergence. Section~\ref{Numerical experiments} reports the numerical
experiments, and Section~\ref{conclusions} concludes the paper.

\section{Preliminaries}\label{sec: Preliminaries}
In this section, we first present the linear elastic problem in heterogeneous media. Then we introduce the key definitions and notation that will be employed throughout our subsequent analysis.
\subsection{Model problem}
We consider the elastic wave equation in a polyhedral domain
$\Omega\subset\mathbb{R}^d$ ($d=2,3$).
Let $\underline{\sigma}(\cdot,t):\Omega\to\mathbb{R}^{d\times d}$ be the symmetric stress tensor,
$\bm{u}(\cdot,t)$ the displacement field, and $\rho$ the mass density.
The equations read
\begin{align}
\mathcal{A}\underline{\sigma} &= \underline{\varepsilon}(\bm{u}) && \text{in } \Omega\times(0,T], \label{eq:bulk} \\
\rho\,\frac{\partial^2\bm{u}}{\partial t^2} - \divg\underline{\sigma} &= \bm{f} && \text{in } \Omega\times(0,T], \label{eq:momentum}
\end{align}
where $\bm{f}(\bm{x},t)$ is a given source term. The problem is supplemented with the homogeneous
Dirichlet boundary condition $\bm{u}=\bm{0}$ on $[0,T]\times\partial \Omega$ and initial conditions $\bm{u}(\bm{x},0)=\bm{u}_0(\bm{x})$ and $\bm{u}_t(\bm{x},0)=\bm{v}_0(\bm{x})$ in $\Omega$.
Here $\underline{\varepsilon}(\bm{u}) = \tfrac12\bigl(\nabla\bm{u}+(\nabla\bm{u})^T\bigr)$ is the linearized strain tensor,
and $\mathcal{A}$ is the inverse of the elasticity operator, defined by
\begin{equation}
\mathcal{A}\underline{\tau} := \frac{1}{2\mu}\Bigl(\underline{\tau} - \frac{\lambda}{d\lambda+2\mu}(\operatorname{tr}\underline{\tau})\,\underline{I}\Bigr),
\end{equation}
where $\lambda,\mu$ are the Lam\'e coefficients and $\underline{I}$ is the identity matrix in $\mathbb{R}^{d\times d}$. Let $E$ be Young's modulus and $\nu$ be Poisson's ratio, then the Lam$\rm \acute{e}$ coefficients $\lambda$ and $\mu$ are denoted as
\begin{equation}
	\label{Lame constants}
	\lambda\coloneqq\frac{E\nu}{(1+\nu)(1-2\nu)},\quad  \mu\coloneqq\frac{E}{2(1+\nu)}.
\end{equation}
For nearly incompressible materials, $\lambda$ is large in comparison with $\mu$ (More precisely, $\lambda\to\infty$ as Poisson’s ratio $\nu\to 1/2$ while $\mu$ is bounded). In this paper, $\lambda$ and $\mu$ are highly heterogeneous in space and possibly high contrast. We then recall some notation in \cite{Fu2025}.  Define 
\(
\bm{H}(\operatorname{div};\Omega) \coloneqq \{ \bm{v} \in L^2(\Omega), \; \nabla\cdot\bm{v} \in L^2(\Omega) \},
\) equipped with the norm
\(
\|\bm{v}\|_{H(\operatorname{div};\Omega)}^2 = \|\bm{v}\|_{L^2(\Omega)}^2 + \|\nabla\cdot\bm{v}\|_{L^2(\Omega)}^2
\)
and we use $\underline{H}(\operatorname{div};\Omega)$ to represent the tensor field where each row belongs to $\bm{H}(\operatorname{div};\Omega)$. Denote the following spaces:
\[
\underline{\Sigma}\coloneqq\{\underline{\tau} \in \underline{H}(\operatorname{div} ; \Omega)\}, \qquad 
\bm{U}\coloneqq\bm{L}^2(\Omega), \qquad 
\Gamma\coloneqq L^2(\Omega).
\]
For the mixed formulation, we introduce the Lagrange multiplier
$\gamma = \frac12\bigl(-\frac{\partial u_1}{\partial y} + \frac{\partial u_2}{\partial x}\bigr)$ for $\bm{u}=(\bm{u}_1, u_2)^T$
( when $d=2$) such that $\mathcal{A}\underline{\sigma} = \nabla\bm{u} - \gamma\,\underline{\delta}$ with the constant anti‑symmetric tensor
$\underline{\delta} = \begin{pmatrix} 0 & -1 \\ 1 & 0 \end{pmatrix}$.
The weak form of the static part then becomes:
Find $(\underline{\sigma},\bm{u},\gamma)\in\underline{\Sigma}\times\bm{U}\times\Gamma$ such that (for $d=2$)
\begin{subequations}
\begin{align}
(\, \mathcal{A}\underline{\sigma} \,,\, \underline{w} \,) + 
(\, \bm{u} \,,\, \operatorname{div}\underline{w} \,) + 
(\, \operatorname{as}(\mathcal{A}\underline{w}) \,,\, \gamma \,) &= 0 
\quad &&\forall \underline{w} \in \underline{\Sigma}, \\
(\rho\frac{\partial^2\bm{u}}{\partial t^2},\bm{v}) - 
(\, \operatorname{div}\underline{\sigma} \,,\, \bm{v} \,) &= 
(\, \bm{f} \,,\, \bm{v} \,) 
\quad &&\forall \bm{v} \in \bm{U}, \\
(\, \operatorname{as}(\mathcal{A}\underline{\sigma}) \,,\, \xi \,) &= 0 
\quad &&\forall \xi \in \Gamma,
\end{align}
\end{subequations}
where $\as(\underline{w})=w_{12}-w_{21}$. The initial data is projected onto the space $\bm{U}$ by the following: find $\bm{u}(\cdot,0),\frac{\partial\bm{u}}{\partial t}(\cdot,0)\in\bm{U}$
such that for all $\bm{w}\in\bm{U}$,
\begin{subequations}
\begin{align}
(\, \rho\bm{u}(\cdot,0) \,,\, \bm{w} \,) &= (\, \rho\bm{u}_0,\bm{w}), \\
(\rho\frac{\partial \bm{u}}{\partial t}(\cdot,0),\bm{w}) &= 
(\, \rho\bm{v}_0 \,,\, \bm{w} \,).
\end{align}
\end{subequations}

Similar definitions can be found for the case \(d=3\) in \cite{Fu2025}. Note that  we restrict ourselves to the case $d = 2$ in the construction and analysis of the proposed scheme and the extension to $d = 3$ is natural. 

\subsection{Multipoint stress control volume method}
In this section, we recall the discrete formulation from \cite{Fu2025}. 
We first introduce some notation that will be used throughout.

Let $\mathcal{T}_M$ denote a partition of $\Omega$ into quadrilateral meshes, where each element $M \in \mathcal{T}_M$ is regarded as a macro-element. The set of all edges generated by this partition is denoted by $\mathcal{F}_{pr}$. For each macro-element $M$, we select one interior point and connect it to the midpoints of the edges of $M$, thereby decomposing $M$ into four subcells. This subdivision splits each edge $e \in \mathcal{F}_{pr}$ into two equal half-edges; the collection of all such half-edges is denoted by $\mathcal{F}_{pr}^{\frac12}$ (see \cite[Figure 1]{Fu2025}). Additionally, four interior subcell edges are created inside each macro-element, and the set of all such interior edges is denoted by $\mathcal{F}_{dl}$. We define
\(
\mathcal{F}_h \coloneq \mathcal{F}_{pr}^{\frac12} \cup \mathcal{F}_{dl}.
\)
The union of all subcells is written as $\mathcal{T}_h$. 

An interaction region $D$ is formed by the four subcells sharing a common vertex (see \cite[Figure 1]{Fu2025}). The collection of all interaction regions is denoted by $\mathcal{T}_D$. For an element $D$ ( which may be a macro-element or a subcell), $h_D$ stands for its diameter, and we set \(h \coloneqq \max_{D} \{h_D\}\).
For an edge $e$, $h_e$ denotes its length. For simplicity, we simply denote the fine-scale mesh size by $h$. On a boundary facet $e$, $\bm{n}_e$ is the unit outward normal vector. For an interior facet we fix $\bm{n}_e$ as one of the two possible unit normals. When no confusion arises, we simply write $\bm{n}$.

Let $k \geq 0$ be the polynomial degree. We denote by $P_k$ the space of polynomials of total degree at most $k$, and by $Q_k$ the space of polynomials of degree at most $k$ in each variable. For scalar-, vector-, and matrix- valued functions $q$, $\bm{v}$, and $\underline{\omega}$, respectively, and for two adjacent elements $E^+$ and $E^-$ sharing a common facet $e = \partial E^+ \cap \partial E^-$, the jumps are defined as
\[
\llbracket q \rrbracket \coloneqq q|_{E^+} - q|_{E^-}, \qquad
\llbracket \bm{v} \rrbracket \coloneqq\bm{v}|_{E^+} - \bm{v}|_{E^-}, \qquad
\llbracket \underline{\omega} \rrbracket \coloneq \underline{\omega}|_{E^+} - \underline{\omega}|_{E^-}.
\]
On a boundary facet we set $\llbracket q \rrbracket \coloneqq q$, $\llbracket \bm{v} \rrbracket \coloneqq\bm{v}$, and $\llbracket \underline{\omega} \rrbracket \coloneqq\underline{\omega}$.

We recall Method~1 from \cite[Section 3.1]{Fu2025}, which will be employed in our multiscale scheme. Discrete spaces are defined as follows:
\begin{equation*}\label{eq:spaces}
\begin{aligned}
\bm{\Sigma}_h^* &\coloneqq\Bigl\{\bm{\omega}_h \in \bm{P}_0(E),\; \forall E \in \mathcal{T}_h,\; 
\llbracket\bm{\omega}_h \cdot \bm{n}\rrbracket \big|_e =0,\; \forall e \in \mathcal{F}_{pr}^{\frac12}\backslash{\partial\Omega}\Bigr\}, \\[2mm]
\bm{U}_h &\coloneqq\bigl[U_h\bigr]^2 =\Bigl\{\bm{v}_h \in P_0(M),\; \forall M \in \mathcal{T}_M\Bigr\}, \\[2mm]
\Gamma_h &\coloneqq\Bigl\{\mu_h \in P_0(D),\; \forall D \in \mathcal{T}_D\Bigr\}
\end{aligned}
\end{equation*}
and $\underline{\Sigma}_h\coloneqq[\bm{\Sigma}_h^*]^2$. The discrete formulation for the model problem reads as follows:  
Find $(\sigma_h, u_h, \gamma_h) \in \underline{\Sigma}_h \times \bm{U}_h \times \Gamma_h$ such that  
\begin{subequations}\label{eq:discrete}
\begin{align}
    \bigl( \mathcal{A}\underline{\sigma}_h,\ \underline{w}_h \bigr)
    -\sum_{e\in\mathcal{F}_{dl}} \bigl( \bm{u}_h,\ \llbracket \underline{w}_h \bm{n} \rrbracket \bigr)_e
    +\bigl( \operatorname{as}(\mathcal{A}\underline{w}_h),\ \gamma_h \bigr) &= 0 
    &&\forall \underline{w}_h \in \underline{\Sigma}_h, \label{eq:discrete1} \\[2mm]
    \spint{\rho\frac{\partial^2\bm{u}_h}{\partial t^2}}{\bm{v}_h}-\sum_{e\in\mathcal{F}_{pr}^{\frac12}} \bigl( \underline{\sigma}_h \bm{n},\ \llbracket \bm{v}_h \rrbracket \bigr)_e
    &= \bigl( \bm{f},\ \bm{v}_h \bigr) 
    &&\forall \bm{v}_h \in \bm{U}_h, \label{eq:discrete2} \\[2mm]
    \bigl( \operatorname{as}(\mathcal{A}\underline{\sigma}_h),\ \xi_h \bigr) &= 0 
    &&\forall \xi_h \in \Gamma_h. \label{eq:discrete3}
\end{align}
\end{subequations}
 The initial data is projected onto the space $\bm{U}_h$ by the following: find $\bm{u}_h(\cdot,0),\frac{\partial\bm{u}_h}{\partial t}(\cdot,0)\in\bm{U}_h$
such that for all $\bm{w}_h\in\bm{U}_h$,
\begin{subequations}\label{discrete_initial_data}
\begin{align}
(\, \rho\bm{u}_h(\cdot,0) \,,\, \bm{w}_h \,) &= (\, \rho\bm{u}_0,\bm{w}_h), \label{discrete_initial_data_a}\\
(\rho\frac{\partial \bm{u}_h}{\partial t}(\cdot,0),\bm{w}_h) &= 
(\, \rho\bm{v}_0 \,,\, \bm{w}_h \,).\label{discrete_initial_data_b}
\end{align}
\end{subequations}
Note that the first term on the left-hand side of \eqref{eq:discrete1} is block-diagonal; consequently, we can eliminate $\sigma_h$ locally to obtain a reduced system. According to \cite[Section 4.4]{Fu2025}, the rotation can be further eliminated. 
Let $\bm{\sigma},\mathbf{u},\bm{\gamma}$ be the coefficient vectors representing
$\underline{\sigma}_h,\bm{u}_h,\gamma_h$ in the chosen bases.
The block structure of \eqref{eq:discrete} gives the matrix form
\begin{equation}
\mathbf{M}_u\frac{\partial^2\mathbf{u}}{\partial t^2} +
\begin{pmatrix}
\mathbf{0} & \mathbf{A}_{\sigma u} & \mathbf{0} \\
\mathbf{A}_{\sigma u}^T & \mathbf{A}_{\sigma\sigma} & \mathbf{A}_{\sigma\gamma}^T \\
\mathbf{0} & \mathbf{A}_{\sigma\gamma} & \mathbf{0}
\end{pmatrix}
\begin{pmatrix} \mathbf{u} \\ \bm{\sigma} \\ \bm{\gamma} \end{pmatrix}
= \begin{pmatrix} \mathbf{F} \\ \mathbf{0} \\ \mathbf{0} \end{pmatrix},
\end{equation}
with \((\mathbf{M}_u)_{ij}= \spint{\rho\bm{v}_j}{\bm{v}_i},(\mathbf{A}_{\sigma u})_{ij} = \sum_{e\in\mathcal{F}_{dl}} \bigl(\bm{v}_j,\,\jump{\underline{w}_i\bm{n}}\bigr)_e
= -\sum_{e\in\mathcal{F}_{\mathrm{pr}}^{1/2}} \bigl(\jump{\bm{v}_j},\,\underline{w}_i\bm{n}\bigr)_e,(\mathbf{A}_{\sigma\gamma})_{ij} = \spint{\as(\underline{w}_j)}{\xi_i}.\)

Eliminating $\bm{\sigma}$ and $\bm{\gamma}$ from the first and third block equations yields
$\bm{\sigma} = \mathbf{K}_1\mathbf{u}$ with
\begin{equation}
\mathbf{K}_1 = -\mathbf{A}_{\sigma\sigma}^{-1}
\Bigl[\,\mathbf{I} - \mathbf{A}_{\sigma\gamma}^T
\bigl(\mathbf{A}_{\sigma\gamma}\mathbf{A}_{\sigma\sigma}^{-1}\mathbf{A}_{\sigma\gamma}^T\bigr)^{-1}
\mathbf{A}_{\sigma\gamma}\mathbf{A}_{\sigma\sigma}^{-1}\Bigr]\mathbf{A}_{\sigma u}^T .
\end{equation}
Substituting into the second equation we obtain a second‑order ODE system for the displacement alone:
\begin{equation}\label{eq:schur_system}
\mathbf{M}_u\frac{\partial^2\mathbf{u}}{\partial t^2} + \mathbf{K}\mathbf{u} = \mathbf{F},
\end{equation}
where $\mathbf{K} = -\mathbf{A}_{\sigma u}\mathbf{K}_1$ is the symmetric positive definite stiffness
matrix, as follows from the local solvability of the multipoint stress
formulation and the coercivity of the corresponding reduced energy form.

\subsection{Variational form for reduced cell-centered system}\label{reduced cell-centered system}
For the sake of analysis, we derive the corresponding variational form of \eqref{eq:schur_system} in this section.

It is straightforward to verify that for any piecewise‑constant stress $\underline{\tau}_h$,
\begin{equation}\label{eq:divergence_identity}
\sum_{e\in\mathcal{F}_{pr}^{\frac12}}\bigl(\underline{\tau}_h \bm{n},\; \llbracket \bm{v}_h \rrbracket\bigr)_e
= \sum_{e\in\mathcal{F}_{dl}}\bigl(\bm{v}_h,\; \llbracket \underline{\tau}_h \bm{n} \rrbracket\bigr)_e,
\end{equation}
holds
for all $\bm{v}_h\in\bm{U}_h$. 
Combining this equivalence, we define a jump operator $\bm{\mathcal{J}}\colon \bm{U}_h\to\underline{\Sigma}_h$ such that for all $\bm{v}_h\in\bm{U}_h$,
\begin{equation}\label{eq:jump_operator}
\begin{aligned}
(\bm{\mathcal{J}}(\bm{v}_h),\, \underline{w}_h)
\coloneqq\sum_{e\in\mathcal{F}_{dl}} \bigl(\bm{v}_h,\; \llbracket \underline{w}_h \bm{n} \rrbracket\bigr)_e= -\sum_{e\in\mathcal{F}_{pr}^{\frac12}} \bigl(\llbracket \bm{v}_h \rrbracket,\; \underline{w}_h \bm{n}\bigr)_e,
\qquad \forall \underline{w}_h \in \underline{\Sigma}_h.
\end{aligned}
\end{equation}
Let $\operatorname{as}^*: \Gamma_h \rightarrow \underline{\Sigma}_h$ be the adjoint of $\operatorname{as}$ with respect to the $L^2$ inner product, i.e., 
\[
\bigl(\operatorname{as}(\underline{w}_h),\; \gamma_h\bigr) = \bigl(\underline{w}_h,\; \operatorname{as}^*(\gamma_h)\bigr), \quad \forall \underline{w}_h \in \underline{\Sigma}_h,\; \gamma_h \in \Gamma_h.
\]
Then \cref{eq:discrete1} yields:
\begin{equation}\label{eq:sigma_expression}
\underline{\sigma}_h = \mathcal{A}^{-1}\bigl(\bm{\mathcal{J}}(\bm{u}_h) - \mathcal{A}\operatorname{as}^*(\gamma_h)\bigr).
\end{equation}
Substituting \cref{eq:sigma_expression} into \cref{eq:discrete3} gives
$
\bigl(\operatorname{as}\bigl(\bm{\mathcal{J}}(\bm{u}_h) - \mathcal{A}(\operatorname{as}^*(\gamma_h))\bigr),\; \xi_h\bigr) = 0,$ for all $\xi_h \in \Gamma_h.$
This uniquely determines $\gamma_h$ as a function of $u_h$, denoted by $\gamma_h(\bm{u}_h) $, satisfying the orthogonality condition
\begin{equation}\label{eq:orthogonality}
\left(\bm{\mathcal{J}}(\bm{u}_h) - \mathcal{A}\left(\operatorname{as}^*(\gamma_h(\bm{u}_h))\right),\; \operatorname{as}^*(\xi_h)\right) = 0, 
\quad \forall \xi_h \in \Gamma_h.
\end{equation}
We define the weakly symmetric stress projection
$\bm{\mathcal{P}}_{\star}: \bm{U}_h \to \underline{\Sigma}h$ by
\begin{equation}
\bm{\mathcal{P}}_{\star}(\bm{u}_h) := \bm{\mathcal{J}}(\bm{u}_h) - \mathcal{A}\bigl(\operatorname{as}^*(\gamma_h(\bm{u}_h))\bigr). \label{eq:Pstar-wave}
\end{equation}
By~\eqref{eq:orthogonality}, $\bm{\mathcal{P}}_{\star}$ is the orthogonal projection (with respect to
$(\mathcal{A}^{-1}\,\cdot\,,\cdot)$) of $\bm{\mathcal{J}}(\bm{u}_h)$ onto the subspace of weakly symmetric stresses.
Using \eqref{eq:jump_operator}, \eqref{eq:discrete2} becomes
\begin{equation}
\spint{\rho\frac{\partial^2\bm{u}_h}{\partial t^2}}{\bm{v}_h}
+ \bigl(\bm{\mathcal{J}}(\bm{v}_h), \underline{\sigma}_h\bigr)
= \spint{\bm{f}}{\bm{v}_h}, \quad \forall\bm{v}_h \in \bm{U}_h.
\end{equation}
Substituting $\underline{\sigma}_h =\mathcal{A}^{-1}\bm{\mathcal{P}}_{\star}(\bm{u}_h)$ and using the
orthogonality~\eqref{eq:orthogonality}, we obtain the symmetric formulation:
\begin{equation}
\spint{\rho\frac{\partial^2\bm{u}_h}{\partial t^2}}{\bm{v}_h}
+ \bigl(\bm{\mathcal{J}}(\bm{v}_h) - \mathcal{A}\operatorname{as}^*(\gamma_h(\bm{v}_h)),\,
\mathcal{A}^{-1}\bigl(\bm{\mathcal{J}}(\bm{u}_h) - \mathcal{A}\operatorname{as}^*(\gamma_h(\bm{u}_h))\bigr)\bigr)
= \spint{\bm{f}}{\bm{v}_h}, \quad \forall\bm{v}_h \in \bm{U}_h.
\end{equation}
Thus, the energy bilinear form is
\begin{equation}
a(\bm{u}_h, \bm{v}_h)
:= \bigl(\mathcal{A}^{-1} \bm{\mathcal{P}}_{\star}(\bm{u}_h),\,
\bm{\mathcal{P}}_{\star}(\bm{v}_h)\bigr)
= \bigl(\mathcal{A}^{-1}\bigl(\bm{\mathcal{J}}(\bm{u}_h) - \mathcal{A}\operatorname{as}^*(\gamma_h(\bm{u}_h))\bigr),\,
\bm{\mathcal{J}}(\bm{v}_h) - \mathcal{A}\operatorname{as}^*(\gamma_h(\bm{v}_h))\bigr). \label{eq:ah-wave}
\end{equation}
We also define the energy norm $\norm{\bm{v}}_a^2=a(\bm{v},\bm{v})$. The reduced weak form of the semi-discrete elastic wave equation therefore reads:
\begin{equation}
\spint{\rho\frac{\partial^2\bm{u}_h}{\partial t^2}}{\bm{v}_h}
+ a(\bm{u}_h, \bm{v}_h)
= \spint{\bm{f}}{\bm{v}_h}, \qquad \forall\bm{v}_h \in \bm{U}_h\; t \in (0,T], \label{eq:reduced-wave}
\end{equation}
with the initial data satisfying (\ref{discrete_initial_data}).
\begin{remark}
The bilinear form $a(\cdot,\cdot)$ is symmetric, positive definite, and corresponds to the Schur-complement energy obtained after eliminating the stress and rotation variables from the
original mixed formulation~\eqref{eq:discrete}.  Equation~\eqref{eq:reduced-wave} operates
solely on the displacement degrees of freedom in $\bm{U}_h$.
\end{remark}

\section{The multiscale method}\label{sec:multiscale}
In this section, we will present the construction of our multiscale method.
We first define some notations. 
Let $\mathcal{T}_H\coloneqq\cup_{i=1}^N\{K_i\}$ denote a conforming quasi-uniform partition of the domain $\Omega$ into quadrilateral elements, where $H$ represents the coarse mesh size and $N$ is the total number of coarse elements. We refer to $\mathcal{T}_H$ as the coarse grid, where each coarse element $K_i$ is further subdivided into a connected union of fine-grid blocks in $\mathcal{T}_M$. In this setting, $\mathcal{T}_M$ serves as the fine-scale grid in our multiscale framework, refining the coarse grid $\mathcal{T}_H$. Define the oversampled
region $K_{i,m}$ by extending the coarse element $K_i$ by $m$ coarse grid layers.

\subsection{Multiscale basis construction}
We first employ the GMsFEM spectral problem to construct the multiscale test basis functions on each generic coarse element $K$. Then, by means of constrained energy minimization, we construct the multiscale trial basis functions. Subsequently, we derive the coarse-scale model using a Petrov–Galerkin projection and a symmetric formulation, which leads to an explicit time-marching coarse-scale scheme.

For each coarse element \( K_i \) (\( 1 \leq i \leq N \)), let $\bm{U}_h(K_i)$ denote the restriction of $U_h$ to $K_i$. We solve the following spectral problem: find a real number $\lambda_j^i$ and a function $\bm{\phi}_j^i \in \bm{U}_h(K_i)$ such that
\begin{equation}\label{eq:spectral_problem}
    a_i(\bm{\phi}_j^i,\bm{w})
    =
    \lambda_j^i/H^2 (\rho \bm{\phi}_j^i,\bm{w}),
    \qquad
    \forall \bm{w}\in \bm{U}_h(K_i),
\end{equation}
where
the local energy form is \(a_i(\bm v,\bm w):=\bigl(\mathcal A^{-1}\bm{\mathcal P}_\star^i(\bm v),\bm{\mathcal P}_\star^i(\bm w)\bigr)_{K_i}.\) The local operators \(\bm{\mathcal J}_i\) and \(\operatorname{as}_i^*\) are the restrictions of \(\bm{\mathcal J}\) and \(\operatorname{as}^*\), respectively, to the coarse block \(K_i\). More precisely,
 $\bm{\mathcal{J}}_i: \bm{U}_h(K_i)\rightarrow\underline{\Sigma}_h(K_i)$ defined by
  \begin{equation*}
    (\bm{\mathcal{J}}_i(\bm{u}_h), w_h)_i \coloneqq \sum_{e \in \mathcal{F}_{dl} \cap K_i} 
    \bigl(\bm{u}_h,\; \llbracket w_h n \rrbracket\bigr)_e, \qquad \forall w_h \in \underline{\Sigma}_h(K_i),
  \end{equation*}
  where $\underline{\Sigma}_h(K_i)$ is a conforming subspace of $\underline{\Sigma}_h$. And
   $\operatorname{as}_i^*: \Gamma_h(K_i) \rightarrow \underline{\Sigma}_h(K_i)$ defined as the adjoint of $\operatorname{as}_i$: for all $\underline{\xi}_h\in\Gamma_h(K_i)$,
  \begin{equation*} \bigl(\operatorname{as}_i(\underline{\tau}_h),\; \underline{\xi}_h\bigr)_{K_i} = 
    \bigl(\underline{\tau}_h,\; \operatorname{as}_i^*(\underline{\xi}_h)\bigr)_{K_i},\quad\forall \underline{\tau}_h\in\underline{\Sigma}_h(K_i).
  \end{equation*}
For every \(\bm v\in\bm U_h(K_i)\), define \(\gamma_h^i(\bm v)\in\Gamma_h(K_i)\) as the unique function satisfying the local orthogonality condition\begin{equation}\label{eq:local-orthogonality}\bigl(\bm{\mathcal J}_i(\bm v)-\mathcal A\operatorname{as}_i^*(\gamma_h^i(\bm v)),\operatorname{as}_i^*(\xi_h)\bigr)_{K_i}=0,\qquad\forall \xi_h\in\Gamma_h(K_i).\end{equation}The local weakly symmetric stress projection is then defined by\begin{equation}\label{eq:local-Pstar}\bm{\mathcal P}_\star^i(\bm v):=\bm{\mathcal J}_i(\bm v)-\mathcal A\operatorname{as}_i^*(\gamma_h^i(\bm v)).\end{equation}
We assume the normalization $(\rho \bm{\phi}_j^i,\bm{\phi}_{j'}^i)=\delta_{jj'}$. Define the $L^2$-weighted norm $\norm{\cdot}_{\rho(K_i)}$ as $\norm{\bm{v}}_{\rho(K_i)}^2=(\rho \bm{v},\bm{v})_{K_i}$ and the energy norm $\norm{\bm{v}}_{a(K_i)}^2=a_i(\bm{v},\bm{v})$. The global version is $\norm{\bm{v}}_\rho^2=(\rho \bm{v},\bm{v}), \norm{\bm{v}}_a^2=a(\bm{v},\bm{v})$, respectively. Note that the above eigenvalue problem (\ref{eq:spectral_problem}) is solved on the fine mesh in the actual computations.

Let $\lambda_j^i$ be the eigenvalues of \cref{eq:spectral_problem} arranged in ascending order. We will use the first $l_i$ eigenfunctions to construct our local multiscale test space $\bm{V}_{\mathrm{aux}}^i \coloneqq \operatorname{span}\{\bm{\phi}_j^i \mid 1 \leq j \leq l_i\},
\)
with $l_i \leq \dim \bm{U}_h(K_i)$. The global multiscale test space $\bm{V}_{\mathrm{aux}}$ is the direct sum of these local spaces, namely
$
\bm{V}_{\mathrm{aux}} = \bigoplus_{i=1}^{N} \bm{V}_{\mathrm{aux}}^i.
$
We define the local projection operator $\pi_i$ as
\(
    \pi_i\colon \bm{U}_h(K_i)\to \bm{V}_{\rm aux}^i,
\)
which is given by
\(\pi_i(\bm{v})=\sum_{j=1}^{l_i}
    (\rho \bm{v},\bm{\phi}_j^i)\,\bm{\phi}_j^i.
\)
Accordingly, the global version is defined as $\pi\coloneq \sum_{i=1}^N\pi_i$. Note that the domain of $\pi$ can be extended to $\bm{L}^2(K_i)$.
For each basis function $\bm{\phi}_j^i$, the global multiscale basis
function is defined by
\begin{equation}\label{global_mini}
    \bm{\psi}_j^i
    =
    \operatorname*{arg\,min}_{\bm{\psi}\in \bm{U}_h}
    \left\{
        a(\bm{\psi},\bm{\psi}):
        \pi(\bm{\psi})=\bm{\phi}_j^i
    \right\}.
\end{equation}
By introducing a Lagrange multiplier, the minimization problem (\ref{global_mini}) is equivalent to the following variational problem: find \( \bm{\psi}_j^{i} \in \bm{U}_h \) and \( \bm{\xi}_j^{i} \in \bm{V}_{\mathrm{aux}} \) such that
\[
\begin{aligned}
a \left( \bm{\psi}_j^{i}, \bm{\psi} \right) + \left(\rho \bm{\psi}, \bm{\xi}_j^{i} \right) &= 0 \quad \text{for all } \bm{\psi} \in \bm{U}_h, \\
\left(\rho(\bm{\psi}_j^{i} - \bm{\phi}_j^{i}), \bm{\mu} \right) &= 0 \quad \text{for all } \bm{\mu} \in \bm{V}_{\mathrm{aux}}.
\end{aligned}
\]
For the oversampling region \(K_{i,m}\), define the restricted auxiliary space \(\bm V_{\rm aux}(K_{i,m}):=\bigoplus_{K_\ell\subset K_{i,m}}\bm V_{\rm aux}^\ell.\) The localized multiscale basis $\bm{\psi}_{j,m}^i$ is defined similarly on $K_{i,m}$:
\begin{equation}\label{local_mini}
    \bm{\psi}_{j,m}^i
    =
    \operatorname*{arg\,min}_{\bm{\psi}\in \bm{U}_h(K_{i,m})}
    \left\{
        a(\bm{\psi},\bm{\psi}):
        \pi(\bm{\psi})=\bm{\phi}_j^i
    \right\},
\end{equation}
and it is extended by zero outside $K_{i,m}$.
Introducing a Lagrange multiplier, we obtain the following equivalent variational formulation of the minimization problem (\ref{local_mini}): find \( \bm{\psi}_{j,m}^{i} \in \bm{U}_h(K_{i,m}) \) and \( \bm{\xi}_{j,m}^{i} \in \bm{V}_{\mathrm{aux}}(K_{i,m}) \) such that
\[
\begin{aligned}
a \left( \bm{\psi}_{j,m}^{i}, \bm{\psi} \right) + \left(\rho \bm{\psi}, \bm{\xi}_{j,m}^{i} \right) &= 0 \quad \text{for all } \bm{\psi} \in \bm{U}_h(K_{i,m}), \\
\left( \rho(\bm{\psi}_{j,m}^{i} - \bm{\phi}_j^{i}), \bm{\mu} \right) &= 0 \quad \text{for all } \bm{\mu} \in \bm{V}_{\mathrm{aux}}(K_{i,m}).
\end{aligned}
\]
Then the multiscale trial space is defined as
\( \bm{V}_{\rm ms}=\operatorname{span}\{\bm{\psi}_{j,m}^i\}
\) and the global multiscale trial space is $\bm{V}_{\rm glo}=\operatorname{span}\{\bm{\psi}_{j}^i\}$. In particular, we define \( \widetilde{\bm{V}} \) as the null space of the projection \( \pi \), namely, \( \widetilde{\bm{V}} = \{ \bm{v} \in \bm{U}_h : \pi(\bm{v}) = \bm{0} \} \). Then for any \( \bm{\psi}_j^{i} \in \bm{V}_{\mathrm{glo}} \), we have
\(
a(\bm{\psi}_j^i, \bm{v}) = 0,\) for all \(\bm{v} \in \widetilde{\bm{V}}.
\)
Thus, \(\widetilde{\bm{V}} \subset \bm{V}_{\mathrm{glo}}^\perp\). Consequently \(\bm{V}_{\mathrm{glo}} \subset \widetilde{\bm{V}}^\perp\). Notice that the restriction map \(\pi : \widetilde{\bm{V}}^\perp \to \bm{V}_{\mathrm{aux}}\) is injective. Therefore, we have \(\dim\{\widetilde{\bm{V}}^\perp\} \leq \dim\{\bm{V}_{\mathrm{aux}}\} < \infty\). By the well-posedness of the construction of the global basis functions, we have \(\dim\{\bm{V}_{\mathrm{glo}}\} = \dim\{\bm{V}_{\mathrm{aux}}\} \geq \dim\{\widetilde{\bm{V}}^\perp\}\) and thus \(\widetilde{\bm{V}}^\perp = \bm{V}_{\mathrm{glo}}\). Hence, we have \(\widetilde{\bm{V}} = \bm{V}_{\mathrm{glo}}^\perp\). Thus, we have \(\bm{U}_h = \bm{V}_{\mathrm{glo}} \oplus \widetilde{\bm{V}}\).
\begin{proposition}[Properties of the operator $\pi$ \cite{EricYETY2023,Eric2018,ChungKimZhong2026,chung2025locking}] \label{right_inverse}
In each $K_i\in\mathcal{T}_H$, for all $\bm{v}\in \bm{U}_h(K_i)$,
\begin{equation} \label{eq:spectral-estimate}
\norm{(I-\pi)\bm{v}}_{\rho(K_i)}^2\leq H^2/\lambda_{l_i+1}^i\norm{\bm{v}}_{a(K_i)}^2\leq H^2/\Lambda\norm{\bm{v}}_{a(K_i)}^2.
\end{equation}
where $\Lambda\coloneq \min_{1\leq i\leq N} \lambda_{l_i+1}^i$, and  
\begin{equation*}\label{pi2}
\norm{\pi_i\bm{v}}_{\rho(K_i)}^2=\norm{\bm{v}}_{\rho(K_i)}^2-\norm{(I-\pi_i)\bm{v}}_{\rho(K_i)}^2\leq	\norm{\bm{v}}_{\rho(K_i)}^2.
\end{equation*}
The projection $\pi$ admits a stable right inverse. Namely, for any
$\bm{v}\in \bm{V}_{\rm aux}$, there exists $R\bm{v}\in \bm{U}_h$ such that
\[
    \pi(R\bm{v})=\bm{v},
    \quad
    \|R\bm{v}\|_a^2\le C_{\rm lift}H^{-2}\|\bm{v}\|_\rho^2,\quad {\rm supp}(R\bm{v})\subset {\rm supp}(\bm{v}),
\]
where $C_{\rm lift}>0$ is dependent on the regularity of the mesh and the eigenvalue $\lambda_{\max} = \max_{1 \leq i \leq N} \max_{1 \leq j \leq l_i} \lambda_j^{(i)}$, but independent of $h$, $H$, and the contrast.
\end{proposition}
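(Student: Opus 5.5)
The first two estimates follow from the eigen-expansion of the local generalized eigenproblem \eqref{eq:spectral_problem}; the right inverse is built from a discrete bubble-type extension on each coarse block.

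\textbf{Local spectral estimate.} On $\bm{U}_h(K_i)$ the form $(\rho\cdot,\cdot)_{K_i}$ is an inner product, since $\rho$ is bounded below. The form $a_i$ is symmetric positive semidefinite. Hence the eigenfunctions $\{\bm{\phi}_j^i\}$ of \eqref{eq:spectral_problem} form a $\rho$-orthonormal basis of $\bm{U}_h(K_i)$ that is also $a_i$-orthogonal, with $a_i(\bm{\phi}_j^i,\bm{\phi}_k^i)=\lambda_j^i H^{-2}\delta_{jk}$.

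Write $\bm v=\sum_j c_j\bm{\phi}_j^i$ with $c_j=(\rho\bm v,\bm{\phi}_j^i)$. Then $(I-\pi_i)\bm v=\sum_{j>l_i}c_j\bm{\phi}_j^i$, so
\[
\|(I-\pi_i)\bm v\|_{\rho(K_i)}^2=\sum_{j>l_i}c_j^2\le \frac{H^2}{\lambda^i_{l_i+1}}\sum_{j>l_i}\lambda_j^iH^{-2}c_j^2\le \frac{H^2}{\lambda^i_{l_i+1}}\|\bm v\|_{a(K_i)}^2 .
\]
The second inequality in \eqref{eq:spectral-estimate} is immediate from the definition of $\Lambda$. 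On $K_i$ we have $\pi=\pi_i$, since the supports of the local eigenfunctions are disjoint. The Pythagorean identity for $\pi_i$ holds because $\pi_i$ is the $\rho$-orthogonal projection, so $\pi_i\bm v\perp_\rho(I-\pi_i)\bm v$.

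\textbf{Right inverse.} Because $\pi$ and the target space split over coarse blocks, it suffices to build, for each $i$ and each $\bm v_i\in\bm V^i_{\rm aux}$, a function $R_i\bm v_i\in\bm U_h$ supported in $K_i$ with $\pi(R_i\bm v_i)=\bm v_i$. Then set $R\bm v=\sum_i R_i(\bm v|_{K_i})$.

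For this I would use the standard CEM construction. Solve the local constrained minimization
\[
R_i\bm v_i=\operatorname*{arg\,min}\{a(\bm w,\bm w):\ \bm w\in\bm U_h^0(K_i),\ \pi_i\bm w=\bm v_i\},
\]
where $\bm U_h^0(K_i)$ consists of fine functions vanishing outside $K_i$. The global form $a$ then sees the zero extension, including the boundary half-edge jumps across $\partial K_i$.

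Since $\pi_i$ maps onto $\bm V^i_{\rm aux}$ already from the subspace spanned by the $\bm\phi_j^i$, the main issue is the energy bound. The naive candidate $\bm w=\bm v_i$ (zero extension) has global energy
\[
a(\bm v_i,\bm v_i)\le a_i(\bm v_i,\bm v_i)+(\text{boundary jump terms}).
\]
The interior part is controlled by $a_i(\bm v_i,\bm v_i)\le \lambda_{\max}H^{-2}\|\bm v_i\|^2_{\rho(K_i)}$. The boundary jump terms, however, scale like $h^{-1}\|\bm v_i\|^2_{L^2(\partial K_i)}$, which is not $O(H^{-2})$. So instead I would correct $\bm v_i$ by a near-boundary cutoff. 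Multiply by a fine-scale discrete cutoff $\chi$ equal to $1$ away from a layer of width $\sim H$ near $\partial K_i$. Then fix the constraint by adding a combination of bubble-type functions $\bm b_j$ supported in the interior with $(\rho\bm b_j,\bm\phi_k^i)=\delta_{jk}$ and $\|\bm b_j\|_a^2\lesssim H^{-2}$.

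The existence of such $\bm b_j$ with contrast-independent constants is the key obstacle. I would take $\bm b_j$ as $\rho$-dual functions built from the $\bm\phi_j^i$ restricted away from $\partial K_i$. Their uniform invertibility is the inf-sup condition $\sup_{\bm w\in\bm U_h^0(K_i)}(\rho\bm w,\bm q)/\|\bm w\|_a\gtrsim H\|\bm q\|_\rho$ for $\bm q\in\bm V^i_{\rm aux}$, as in \cite{Eric2018,EricYETY2023}.

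That inf-sup bound is then the crux, with constant depending on $\lambda_{\max}$ and mesh regularity. Granting it, standard saddle-point theory gives $\|R_i\bm v_i\|_a^2\le C_{\rm lift}H^{-2}\|\bm v_i\|^2_{\rho(K_i)}$. Summing over $i$ (the supports are disjoint, and $a$ is local) gives the global bound and the support property.
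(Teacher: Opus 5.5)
Your eigen-expansion proof of \eqref{eq:spectral-estimate} and of the Pythagorean identity for $\pi_i$ is correct and complete. It is the standard argument the paper has in mind; the paper itself gives no proof, only a pointer to the CEM literature.

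The right-inverse part, however, is not actually proved. You reduce everything to the inf-sup bound $\sup_{w}(\rho w,q)/\|w\|_a\gtrsim H\|q\|_\rho$, for $q\in\bm V_{\rm aux}^i$ and $w$ supported in $K_i$, and then grant it. The cutoff step adds nothing. Once interior dual functions $b_j$ with $(\rho b_j,\bm\phi_k^i)=\delta_{jk}$ and $\|b_j\|_a^2\lesssim H^{-2}$ exist, the choice $R_i\bm v_i:=\sum_j(\rho\bm v_i,\bm\phi_j^i)\,b_j$ already gives $\pi R_i\bm v_i=\bm v_i$ and $\|R_i\bm v_i\|_a^2\lesssim l_iH^{-2}\|\bm v_i\|_\rho^2$. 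So the whole content of the claim sits in the step you assume. The energy of $\chi\bm v_i$, which you never estimate, is exactly where the difficulty below appears.

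That step is not a formality here. The proof in \cite{Eric2018} takes $w=Bq$ with a bubble $B$ vanishing on $\partial K_i$ and bounds
$\|Bq\|_a^2\lesssim\int\kappa B^2|\nabla q|^2+\int\kappa|\nabla B|^2|q|^2$.
The second, commutator, term is absorbed only because the auxiliary weight there is $\tilde\kappa=\kappa\sum_j|\nabla\chi_j|^2\ge\kappa|\nabla B|^2$.

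In this paper the auxiliary weight is $\rho H^{-2}$. The eigenvalue bound $\|q\|_{a(K_i)}^2\le\lambda_{\max}H^{-2}\|q\|_{\rho(K_i)}^2$ controls the strain energy of $q$, not the zeroth-order quantity $\int_{K_i}(\lambda+2\mu)|q|^2$. Along your route (bubble or cutoff alike), the commutator term is therefore only bounded by a multiple of $H^{-2}\sup_{K_i}\bigl((\lambda+2\mu)/\rho\bigr)\|q\|_\rho^2$. That constant depends on the wave-speed contrast and degenerates as $\lambda\to\infty$, which is precisely the dependence the statement claims to avoid.

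There is a second hidden dependence. The lower bound $(\rho Bq,q)\gtrsim\|q\|_\rho^2$ needs a non-concentration constant, the analogue of $C_\pi$ in \cite{Eric2018}. It depends on the eigenfunctions themselves, not only on mesh regularity and $\lambda_{\max}$. To close the argument you must either prove the inf-sup by a genuinely different mechanism, or state these extra dependences of $C_{\rm lift}$ explicitly.

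A smaller point: disjoint supports do not make $a$ additive over coarse blocks. A zero-extended piecewise constant jumps across $\partial K_i$, and the recovered stress on shared interaction regions couples neighbouring blocks. Your summation over $i$ therefore holds only up to a finite-overlap constant, unless each $R_i\bm v_i$ vanishes on the cells adjacent to $\partial K_i$.
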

\begin{proof}
This follows the standard analyses in \cite{EricYETY2023,Eric2018,ChungKimZhong2026,chung2025locking}, so we just omit it here.
\end{proof}
\begin{proposition}[Exponential decay of multiscale basis functions \cite{EricYETY2023,Eric2018,ChungKimZhong2026,chung2025locking}]
\label{prop:cem-decay}
Let $\bm{\psi}_j^i,\bm{\psi}_{j,m}^i$ ($m\geq 2$) be multiscale basis function obtained from (\ref{global_mini}) and (\ref{local_mini}), respectively. Then there exist constants
$C>0$ and $0<\theta<1$, independent of $h$, such that
\begin{equation}\label{decay_inequality}
    \|\bm{\psi}_j^i-\bm{\psi}_{j,m}^i\|_a
    \le
    C\theta^m
    \|\bm{\psi}_j^i\|_{a}.
\end{equation}
The constants depend only on the shape regularity of the coarse mesh and on
the stability constants of $a$, and the decay rate improves as the
spectral gap $\Lambda$ increases.
\end{proposition}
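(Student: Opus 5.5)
The plan follows the standard CEM-GMsFEM decay argument: cutoff functions, a Caccioppoli-type recursion across coarse layers, and a local-versus-global comparison.

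\textbf{Step 1: reduce to the global residual.} Since $\bm{\psi}_{j,m}^i$ minimizes $a(\cdot,\cdot)$ over the affine set $\{\bm{\psi}\in\bm{U}_h(K_{i,m}):\pi(\bm{\psi})=\bm{\phi}_j^i\}$, and $\bm{\psi}_j^i$ is $a$-orthogonal to $\widetilde{\bm{V}}$, Galerkin orthogonality gives the quasi-optimality
\[
\|\bm{\psi}_j^i-\bm{\psi}_{j,m}^i\|_a\le \|\bm{\psi}_j^i-\bm{w}\|_a
\]
for every admissible $\bm{w}$ supported in $K_{i,m}$.

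\textbf{Step 2: build a competitor by cutoff and correction.} Let $\chi$ be a piecewise-constant (on coarse blocks) or discrete cutoff, equal to $1$ on $K_{i,m-1}$ and $0$ outside $K_{i,m}$. The product $\chi\bm{\psi}_j^i$ violates the constraint only in the layer $K_{i,m}\setminus K_{i,m-1}$. Repair it with the right inverse $R$ of \cref{right_inverse}:
\[
\bm{w}=\chi\bm{\psi}_j^i-R\bigl(\pi(\chi\bm{\psi}_j^i)-\bm{\phi}_j^i\bigr).
\]
The support property of $R$ keeps $\bm{w}$ inside $K_{i,m}$. The lifting bound, together with \eqref{eq:spectral-estimate} applied to $(I-\pi)\bm{\psi}_j^i$, bounds $\|\bm{\psi}_j^i-\bm{w}\|_a$ by $C(1+\Lambda^{-1/2})\|\bm{\psi}_j^i\|_{a(\Omega\setminus K_{i,m-1})}$. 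Here one also uses that $\pi(\bm{\psi}_j^i)$ vanishes outside $K_i$, so that $\bm{\psi}_j^i\in\widetilde{\bm{V}}$ locally away from $K_i$.

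\textbf{Step 3: the recursion, which is the main obstacle.} Show that the energy of $\bm{\psi}_j^i$ outside $K_{i,k}$ contracts geometrically:
\[
\|\bm{\psi}_j^i\|^2_{a(\Omega\setminus K_{i,k})}\le \frac{C_0}{1+C_0^{-1}\Lambda^{1/2}}\,\|\bm{\psi}_j^i\|^2_{a(\Omega\setminus K_{i,k-1})}.
\]
To get this, test the variational equation for $\bm{\psi}_j^i$ with a $\widetilde{\bm{V}}$-valued perturbation of $(1-\chi_k)\bm{\psi}_j^i$. This perturbation is admissible because $\pi(\bm{\psi}_j^i)=0$ outside $K_i$, after correction by $R$. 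Then bound the commutator terms using \eqref{eq:spectral-estimate}.

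The difficulty is specific to this discretization. The form $a$ is not a local integral of gradients; it is built from the nonlocal-in-rotation projection $\bm{\mathcal{P}}_\star$, whose rotation variable $\gamma_h(\bm{v})$ is determined globally. I would handle this with two facts:
\begin{itemize}
\item Interaction regions $D$ have fine-scale support, so $\bm{\mathcal{J}}$ is local up to one fine layer.
\item The weakly symmetric projection is $\mathcal{A}^{-1}$-orthogonal. Hence $a(\bm{v},\bm{v})\le(\mathcal{A}^{-1}\bm{\mathcal{P}}_{\star}^{\omega}(\bm{v}),\bm{\mathcal{P}}_{\star}^{\omega}(\bm{v}))$ for local projections on subdomains $\omega$, and conversely up to a constant. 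Only the upper bound actually needs the minimization property.
\end{itemize}
With these, a discrete product rule $\bm{\mathcal{J}}(\chi\bm{v})\approx\chi\bm{\mathcal{J}}(\bm{v})+\text{(jump of }\chi\text{)}\bm{v}$ holds. The jump term is controlled by $H^{-1}\|\bm{v}\|_\rho$ on the layer, and that in turn by $\Lambda^{-1/2}\|\bm{v}\|_a$ via \eqref{eq:spectral-estimate}, which produces the contraction factor.

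\textbf{Step 4: iterate and combine.} Iterating the recursion from $k=1$ to $m-1$ gives $\theta=(C_0/(1+C_0^{-1}\Lambda^{1/2}))^{1/2}<1$. This factor decreases as $\Lambda$ grows, matching the statement that the decay rate improves with the spectral gap. Combining with Steps 1--2 yields \eqref{decay_inequality}, where the constant $C$ absorbs $C_{\rm lift}$ and $\Lambda^{-1/2}$; the requirement $m\ge2$ ensures the cutoff layer exists.
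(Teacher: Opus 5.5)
Your overall route (quasi-optimality, cutoff plus correction by $R$, Caccioppoli-type recursion) is the standard CEM argument that the paper invokes by citation. Step 1 and the competitor in Step 2 are sound, provided you use the discrete cutoff.

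\textbf{The gap is in Steps 3--4.} The recursion factor $C_0/(1+C_0^{-1}\Lambda^{1/2})$ is smaller than $1$ only when $\Lambda^{1/2}>C_0(C_0-1)$. Nothing in the statement provides this; for $C_0=2$, $\Lambda=1$ the factor equals $4/3$. So the claim $\theta<1$ is unjustified. A constant multiplying the outer energy is what a quasi-additive hole-filling produces. Writing $X_k:=\|\bm{\psi}_j^i\|^2_{a(\Omega\setminus K_{i,k})}$, it comes from a bound like $\|\bm{\psi}_j^i\|^2_{a(K_{i,k}\setminus K_{i,k-1})}\le C_0X_{k-1}-X_k$, which is the kind of bound your ``conversely up to a constant'' comparison would give. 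The standard argument instead proves $X_k\le C\Lambda^{-1/2}(X_{k-1}-X_k)$, where $X_{k-1}-X_k$ is \emph{exactly} the layer energy. This gives $X_k\le(1+C^{-1}\Lambda^{1/2})^{-1}X_{k-1}$ for every $\Lambda>0$. That hole-filling step is where the contraction comes from; it is absent from your proposal, and it needs an exactly additive splitting of $a$.

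\textbf{The splitting is available, and the obstacle you identify does not exist.} Your claim that $\gamma_h(\bm v)$ is determined globally is incorrect. Normal continuity in $\underline{\Sigma}_h$ is imposed only across primal half-edges, and each of these lies in a single interaction region. Hence $\underline{\Sigma}_h=\bigoplus_{D\in\mathcal{T}_D}\underline{\Sigma}_h(D)$. Since $\Gamma_h$ is piecewise constant on interaction regions, the elimination of stress and rotation decouples: $a(\bm v,\bm v)=\sum_{D}a_D(\bm v,\bm v)$, with $a_D$ depending only on the jumps of $\bm v$ across the half-edges of $D$. This is the locality the paper appeals to.

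\textbf{How to complete the argument.}
Define $X_k$ as sums of $a_D$. Take the discrete cutoff $\eta=1-\chi$ identically $1$ on the fine cells of $K_{i,k}$ touching $\partial K_{i,k}$. Then
$a_D(\bm\psi,\eta\bm\psi)=\bar\eta_Da_D(\bm\psi,\bm\psi)+a_D(\bm\psi,(\eta-\bar\eta_D)\bm\psi)$, where $\bar\eta_D$ is the mean of $\eta$ on $D$, $\bar\eta_D=1$ on every $D$ counted in $X_k$, and $\bar\eta_D\ge 0$ everywhere.

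Two mismatches remain. Interaction regions overlap coarse interfaces by one fine cell, and \eqref{eq:spectral-estimate} is stated for the local forms $a_i$. The latter requires a bound $a_i(\bm v,\bm v)\le C\sum_{D\cap K_i\neq\emptyset}a_D(\bm v,\bm v)$. Both are absorbed by widening the layer. One obtains $X_{k+1}\le C\Lambda^{-1/2}(X_{k-2}-X_{k+1})$, i.e. contraction by $(1+C^{-1}\Lambda^{1/2})^{-1}$ every three coarse layers. This still yields $C\theta^m$, with $\theta$ improving as $\Lambda$ grows.

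\textbf{Drop the coarse-piecewise-constant cutoff.} A unit jump of $\chi$ across a coarse interface creates jumps of size $|\bm v|$ on fine half-edges. Their energy scales like $h^{-2}$ times the $L^2$ norm on a one-cell strip, not like $H^{-2}\|\bm v\|_\rho^2$. Only a discrete cutoff with fine-scale jumps $O(h/H)$ gives the commutator bound you use.
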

\begin{proof}
The proof follows from the stable decomposition and localization estimates in \cite{EricYETY2023,Eric2018,ChungKimZhong2026,chung2025locking}. The main point is
that the reduced bilinear form $a(\cdot,\cdot)$ is symmetric, coercive,
local, and satisfies the same local stability and inverse estimates \cite{Fu2025} as the
standard elliptic energy form. Therefore, the cutoff-function argument and
the stable decomposition used in \cite{EricYETY2023,Eric2018,ChungKimZhong2026,chung2025locking} apply without essential
modification.
\end{proof}

\subsection{Explicit multiscale scheme}
In this section, we present the explicit multiscale scheme. Let \( N_T \) be the number of time steps in the temporal mesh grid and \( \tau = T / N_T \) be the time step size. At the time instant \( t_n = n \tau \), we denote the evaluation of the function \( \bm{v} \) at the time instant \( t_n \) by \( \bm{v}^n \), and we denote an approximation of the solution \( \bm{u}(\cdot, t_n) \) by \( \bm{u}_{\rm ms}^n \). For notation simplification, we define
\[
    \partial_{tt}\bm{v}:= \frac{\partial^2\bm{v}}{\partial t^2},\quad \partial_{t}\bm{v}:= \frac{\partial \bm{v}}{\partial t},\quad D_{tt}\bm{v}^n
    :=
    \frac{\bm{v}^{n+1}-2\bm{v}^n+\bm{v}^{n-1}}{\tau^2},
    \quad
    D_t \bm{v}^{n+\frac12}
    :=
    \frac{\bm{v}^{n+1}-\bm{v}^n}{\tau},\quad
    \bar{\bm{v}}^{n+\frac12}:=
    \frac{\bm{v}^{n+1}+\bm{v}^n}{2}.
\]
We recall fine-scale semi-discrete wave equation:
find $\bm{u}_h(t)\in \bm{U}_h$ such that
\begin{equation}
    \label{eq:fine-dynamic}
    (\rho \partial_{tt}\bm{u}_h(t),\bm{v})+a(\bm{u}_h(t),\bm{v})
    =
    (\bm{f}(t),\bm{v})
    \qquad
    \forall \bm{v}\in \bm{U}_h,
\end{equation}
with the initial data satisfying (\ref{discrete_initial_data}). 

Then we derive our fully discrete coarse-scale system by means of Petrov-Galerkin projection of the fine-scale system onto the coarse-scale spaces as follows: for \( n \geq 1 \), find \( \bm{u}_{\mathrm{ms}}^{n+1} \in \bm{V}_{\mathrm{ms}} \) such that
\begin{equation}\label{original_fully_discrete_form}
\left(\rho D_{tt}\bm{u}_{\mathrm{ms}}^n, \bm{w} \right) + a(\bm{u}_{\mathrm{ms}}^n, \bm{w}) = (\bm{f}^n, \bm{w}) \quad \text{for all } \bm{w} \in \bm{V}_\mathrm{aux},
\end{equation}
where the initial data is projected onto the multiscale space \( \bm{V}_{\mathrm{ms}} \) by the following: find \( \bm{u}_{\mathrm{ms}}^0, \bm{u}_{\mathrm{ms}}^1 \in \bm{V}_{\rm ms} \) such that for all \( \bm{w} \in \bm{V}_{\rm ms} \),
\begin{subequations}\label{taylor_initial_data}
\begin{align}
(\rho\bm{u}_{\mathrm{ms}}^0, \bm{w}) &= (\rho\bm{u}_0, \bm{w}), \label{taylor_initial_data_a} \\
(\rho\bm{u}_{\mathrm{ms}}^1, \bm{w}) &= \left(\rho(\bm{u}_0 + \tau \bm{v}_0), \bm{w} \right) + \frac{\tau^2}{2}\left(\bm{f}^0, \pi \bm{w} \right) - \frac{\tau^2}{2} a(\bm{u}_{\mathrm{ms}}^0, \bm{w}).\label{taylor_initial_data_b}
\end{align}
\end{subequations}
To motivate the symmetric formulation, we first consider the global trial space \( \bm{V}_{\rm glo} \). For any \( \bm{v} \in \bm{V}_{\rm glo} \), the weighted \(L^2\)-projection property gives
\[
(\rho(\bm{v}-\pi(\bm{v})),\bm{w})=0
\quad \text{for all } \bm{w}\in\bm{V}_{\rm aux}.
\]
Moreover, \(\bm{v}-\pi(\bm{v})\in\widetilde{\bm{V}}\), by the orthogonality for $a(\cdot,\cdot)$ with respect to $\bm{V}_{\rm glo}$ and $\widetilde{\bm{V}}$, we have
\[
a(\bm{\psi},\bm{v}-\pi(\bm{v}))=0
\quad \text{for all } \bm{\psi},\bm{v}\in\bm{V}_{\rm glo}.
\]
Consequently, the global counterpart of \eqref{original_fully_discrete_form}, obtained by replacing \(\bm{V}_{\rm ms}\) with \(\bm{V}_{\rm glo}\), is exactly equivalent to
\[
b_\rho\!\left(D_{tt}\bm{u}_{\rm glo}^n,\bm{w}\right)
+a(\bm{u}_{\rm glo}^n,\bm{w})
=(\bm{f}^n,\pi\bm{w})
\quad \text{for all } \bm{w}\in\bm{V}_{\rm glo},
\]
where the bilinear form \( b_\rho \) is defined as
\begin{equation}
    \label{eq:brho}
    b_\rho(\bm{v},\bm{w}):=(\rho\pi \bm{v},\pi \bm{w}),
    \qquad \text{for all } \bm{v},\bm{w}\in \bm{U}_h.
\end{equation}
This is the global symmetrization argument similar to \cite[Section~3.3]{Cheung2021}.

For the localized trial space \(\bm{V}_{\rm ms}\), each localized constrained minimizer is \(a\)-orthogonal only to admissible kernel functions supported in the corresponding oversampling region. Therefore, the preceding global \(a\)-orthogonality is not asserted as an exact identity on \(\bm{V}_{\rm ms}\). Following the localized symmetric formulation in \cite[Section~3.4, equation~(3.21)]{Cheung2021}, we take the following projected-mass formulation as the definition of the localized fully discrete multiscale scheme:
\begin{equation}\label{eq:explicit-ms}
b_\rho \left( D_{tt}\bm{u}_{\mathrm{ms}}^n, \bm{w} \right) + a(\bm{u}_{\mathrm{ms}}^n, \bm{w}) = (\bm{f}^n, \pi \bm{w}) \quad \text{for all } \bm{w} \in \bm{V}_{\mathrm{ms}}.
\end{equation}
The associated seminorm is denoted by
\(
    \|\bm{u}\|_{b_\rho}:=b_\rho(\bm{u},\bm{u})^{1/2}
    =
    \|\pi \bm{u}\|_\rho.
\)
Then we clearly have
\[
    b_\rho(\bm{\psi}_{j,m}^i,\bm{\psi}_{j',m}^{i'})
    =
    (\rho \bm{\phi}_j^i,\bm{\phi}_{j'}^{i'})
    =
    \delta_{ii'}\delta_{jj'}.
\]
Therefore, in the multiscale basis coordinates, the coarse mass matrix is the
identity matrix.
Then
\eqref{eq:explicit-ms} gives a fully explicit update:
\[
    \bm{U}^{n+1}
    =
    2\bm{U}^n-\bm{U}^{n-1}
    +
    \tau^2
    \bigl(\bm{F}^n-\bm{K}_{\rm ms}\bm{U}^n\bigr),
\]
where
\((\bm{K}_{\rm ms})_{\alpha\beta}=a(\bm{\psi}_\beta,\bm{\psi}_\alpha),\)
\(\bm{F}_\alpha^n=(\bm{f}^n,\pi \bm{\psi}_\alpha)
\) with \(\bm{\psi}_\beta,\bm{\psi}_\alpha\in \bm{V}_{\mathrm{ms}}\).

\section{Analysis}\label{sec:analysis}
In this section, we provide detailed analyses of the stability and convergence for the explicit multiscale scheme (\ref{eq:explicit-ms}). In particular, we present the inverse Poincaré inequality in Lemma \ref{lem:inverse-poincare}, the discrete energy identity and stability in Lemma \ref{lem:energy-identity}, several elliptic projection estimates in Lemma \ref{lem:elliptic_projection}, temporal projection errors in Lemma \ref{lem:time_projection}, and the final convergence results for both the displacement and the stress in Theorems \ref{estimate_for_u}, \ref{estimate_for_sigma}.

We first state the key inverse Poincaré inequality on the multiscale space, which is important for subsequent analyses.
\begin{lemma}[Inverse Poincaré inequality]
\label{lem:inverse-poincare}
There exists a constant
$\beta>0$ such that
\begin{equation}
    \label{eq:inverse-poincare}
    \|\pi \bm{v}\|_\rho^2
    \ge
    \beta H^2 \|\bm{v}\|_a^2,
    \qquad
    \forall \bm{v}\in \bm{V}_{\rm ms},
\end{equation}
where $\beta$ is dependent on the regularity of the mesh and the eigenvalue $\lambda_{\mathrm{max}}$, but independent of $h$, $H$, and the contrast.
\end{lemma}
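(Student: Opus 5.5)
The plan is to compare an arbitrary $\bm v\in\bm V_{\rm ms}$ with its counterpart in the global space $\bm V_{\rm glo}$. For the global space the inequality follows directly from the energy-minimizing property and the stable right inverse of \cref{right_inverse}. The localization error is then controlled by the exponential decay of \cref{prop:cem-decay}.

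\textbf{Step 1 (coefficients equal $\pi\bm v$).} Write $\bm v=\sum_{i,j}c_j^i\bm\psi_{j,m}^i$. The localized constraint enforces $(\rho(\bm\psi_{j,m}^i-\bm\phi_j^i),\bm\mu)=0$ for all $\bm\mu\in\bm V_{\rm aux}(K_{i,m})$, and $\bm\psi_{j,m}^i$ vanishes outside $K_{i,m}$. Hence $\pi\bm\psi_{j,m}^i=\bm\phi_j^i$ exactly. By $\rho$-orthonormality of the $\bm\phi_j^i$,
\[
\pi\bm v=\sum_{i,j}c_j^i\bm\phi_j^i,\qquad \|\pi\bm v\|_\rho^2=\sum_{i,j}|c_j^i|^2 .
\]

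\textbf{Step 2 (global estimate).} Set $\bm v_{\rm glo}=\sum_{i,j}c_j^i\bm\psi_j^i\in\bm V_{\rm glo}$, so that $\pi\bm v_{\rm glo}=\pi\bm v=:\bm w$. By linearity of the Euler--Lagrange system, $\bm v_{\rm glo}$ is the $a$-minimizer among all $\bm z\in\bm U_h$ with $\pi\bm z=\bm w$. Since $R\bm w$ is admissible, \cref{right_inverse} gives
\[
\|\bm v_{\rm glo}\|_a^2\le\|R\bm w\|_a^2\le C_{\rm lift}H^{-2}\|\pi\bm v\|_\rho^2 .
\]
Applying the same argument to a single basis function gives $\|\bm\psi_j^i\|_a^2\le C_{\rm lift}H^{-2}$.

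\textbf{Step 3 (localization error).} I need
\[
\Bigl\|\sum_{i,j}c_j^i(\bm\psi_j^i-\bm\psi_{j,m}^i)\Bigr\|_a^2\le C(m+1)^{d}\theta^{2m}\sum_{i,j}|c_j^i|^2\,\|\bm\psi_j^i\|_a^2 .
\]
This is the main obstacle. The per-basis-function bound \eqref{decay_inequality} alone only yields a factor proportional to the total number of basis functions, which is not $H$-independent.

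I would follow the CEM-GMsFEM argument. For each $i$, the difference $\bm e^i:=\sum_j c_j^i(\bm\psi_j^i-\bm\psi_{j,m}^i)$ lies in $\widetilde{\bm V}$ when restricted appropriately, and it satisfies a decay estimate with cutoff functions. One multiplies the global difference by cutoff functions $\chi_i$ supported in $K_{i,m+1}$, uses the $a$-orthogonality of $\bm V_{\rm glo}$ and $\widetilde{\bm V}$, and controls the overlap: each coarse block lies in at most $C(m+1)^d$ oversampled regions. The needed local stability and inverse estimates for $a$ are those already invoked for \cref{prop:cem-decay} (from \cite{Fu2025}). If this becomes too technical, I would cite it as the standard summed version of \cref{prop:cem-decay} in \cite{Eric2018,EricYETY2023}.

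\textbf{Step 4 (conclusion).} Combining Steps 1--3 with the bound $\|\bm\psi_j^i\|_a^2\le C_{\rm lift}H^{-2}$ gives
\[
\|\bm v\|_a\le\|\bm v_{\rm glo}\|_a+\|\bm v-\bm v_{\rm glo}\|_a\le C_{\rm lift}^{1/2}H^{-1}\bigl(1+C(m+1)^{d/2}\theta^m\bigr)\|\pi\bm v\|_\rho .
\]
Since $(m+1)^{d/2}\theta^m$ is bounded uniformly in $m$, squaring yields \eqref{eq:inverse-poincare} with
\[
\beta^{-1}=C_{\rm lift}\bigl(1+C\sup_m(m+1)^{d/2}\theta^m\bigr)^2 .
\]
This constant depends only on the mesh regularity and on $\lambda_{\max}$ through $C_{\rm lift}$, and not on $h$, $H$, or the contrast.
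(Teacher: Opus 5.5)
Your proposal is correct and follows essentially the same route as the paper. The paper also first proves the global bound $\|\bm v_{\rm glo}\|_a^2\le C_{\rm lift}H^{-2}\|\pi\bm v\|_\rho^2$ from the energy-minimizing property and the lifting $R$, and then applies the triangle inequality with a finite-overlap localization estimate $\|\bm v-\bm v_{\rm glo}\|_a\le\eta_m H^{-1}\|\pi\bm v\|_\rho$, which it likewise cites. The only difference is that you make $\eta_m\sim(m+1)^{d/2}\theta^m$ explicit and take the supremum over $m$, so your $\beta$ is uniform in $m$ but also depends on the decay constants $C,\theta$, not only on $C_{\rm lift}$; the paper instead chooses $m$ large enough that $\eta_m$ is harmless.
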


\begin{proof}
We first prove the result for the global multiscale space
\(\bm{V}_{\rm glo}\).
By Proposition \ref{right_inverse}, for any
$\bm{q}\in \bm{V}_{\rm aux}$, there exists $R\bm{q}\in \bm{U}_h$ such that
\[
    \pi(R\bm{q})=\bm{q},
    \qquad
    \|R\bm{q}\|_a^2\le C_{\rm lift}H^{-2}\|\bm{q}\|_\rho^2.
\]
Let $\bm{v}\in \bm{V}_{\rm glo}$ and set $\bm{q}=\pi \bm{v}$. Let $\bm{w}\in \bm{U}_h$ be any function satisfying $\pi \bm{w}=\bm{q}$. Since $\bm{q}=\pi \bm{v}$, we
have
\(\pi(\bm{w}-\bm{v})=0.
\)
Thus $\bm{w}-\bm{v}\in\ker\pi$. Using the above orthogonality, we obtain
\[
\begin{aligned}
    a(\bm{w},\bm{w})
    &=
    a(\bm{v}+(\bm{w}-\bm{v}),\bm{v}+(\bm{w}-\bm{v})) =
    a(\bm{v},\bm{v})+2a(\bm{v},\bm{w}-\bm{v})+a(\bm{w}-\bm{v},\bm{w}-\bm{v})  \\
    &=
    a(\bm{v},\bm{v})+a(\bm{w}-\bm{v},\bm{w}-\bm{v})
    \ge
    a(\bm{v},\bm{v}).
\end{aligned}
\]
Therefore,
\(\|\bm{v}\|_a^2
    =
    \min_{\substack{\bm{w}\in \bm{U}_h\\ \pi w=\bm{q}}}
    \|\bm{w}\|_a^2.
\)
Since $R\bm{q}$ is admissible in the above minimization problem, we have
\begin{equation}
\label{global_poin_inequality}
    \|\bm{v}\|_a^2
    \le
    \|R\bm{q}\|_a^2
    \le
    C_{\rm lift}H^{-2}\|\bm{q}\|_\rho^2
    =
    C_{\rm lift}H^{-2}\|\pi \bm{v}\|_\rho^2.
\end{equation}
Equivalently,
\(\|\pi \bm{v}\|_\rho^2
    \ge
    C_{\rm lift}^{-1}H^2\|\bm{v}\|_a^2\) for all \(\bm{v}\in V_{\mathrm{glo}}\).
Let
\(\bm{v}_{\rm ms}
    =
    \sum_{i,j} c_j^i\bm{\psi}_{j,m}^i
    \in \bm{V}_{\rm ms},\)
   \( \bm{V}_{\rm glo}
    =
    \sum_{i,j} c_j^i\bm{\psi}_j^i
    \in \bm{V}_{\rm glo}.\)
By (\ref{global_mini}) and (\ref{local_mini}), we have
\(\pi \bm{v}_{\rm ms} = \pi \bm{v}_{\rm glo}=\sum_{i,j}c_j^i\bm{\phi}_j^i.\)
In terms of Proposition \ref{prop:cem-decay} and the standard finite-overlap argument for the corresponding linear combinations,
\[
    \|\bm{v}_{\rm ms}-\bm{v}_{\rm glo}\|_a
    \le
    \eta_m H^{-1}\|\pi \bm{v}_{\rm ms}\|_\rho,
\]
where $\eta_m$ decays exponentially with respect to $m$. Hence, combining
\eqref{global_poin_inequality} with the above localization estimate, we obtain
\[
\begin{aligned}
    \|\bm{v}_{\rm ms}\|_a
    &\le
    \|\bm{v}_{\rm glo}\|_a+\|\bm{v}_{\rm ms}-\bm{v}_{\rm glo}\|_a  \\
    &\le
    C_{\rm lift}^{1/2}H^{-1}\|\pi \bm{v}_{\rm glo}\|_\rho
    +
    \eta_m H^{-1}\|\pi \bm{v}_{\rm ms}\|_\rho  \\
    &=
    \bigl(C_{\rm lift}^{1/2}+\eta_m\bigr)
    H^{-1}
    \|\pi \bm{v}_{\rm ms}\|_\rho.
\end{aligned}
\]
Therefore,
\(\|\pi \bm{v}_{\rm ms}\|_\rho^2
    \ge
    \bigl(C_{\rm lift}^{1/2}+\eta_m\bigr)^{-2}
    H^2
    \|\bm{v}_{\rm ms}\|_a^2.\)
For properly selected $m$, the constant
\(\beta
    :=
    \bigl(C_{\rm lift}^{1/2}+\eta_m\bigr)^{-2}
\)
is dependent on the regularity of the mesh and the eigenvalue $\lambda_{\mathrm{max}}$, but independent of $h$, $H$, and the contrast. This proves
\eqref{eq:inverse-poincare}.
\end{proof}
As a consequence of Lemma~\ref{lem:inverse-poincare} and the $\rho$-norm
spectral estimate \ref{eq:spectral-estimate},
for every $\bm{v}\in \bm{V}_{\rm ms}$,
\begin{equation}
    \label{eq:rho-full-by-proj}
    \|\bm{v}\|_\rho
    \le
    \|\pi \bm{v}\|_\rho+\|(I-\pi)\bm{v}\|_\rho
    \le
    \left(1+\frac{1}{\sqrt{\Lambda\beta}}\right)
    \|\pi \bm{v}\|_\rho.
\end{equation}
Next we define a discrete total energy which is related to the stability
and convergence of our method. Given a sequence of states $\{\bm{v}^n\}_{n=0}^{N_T}$, we define the discrete total energy at $t=t_{n+\frac{1}{2}}$ by
\begin{equation}
    \label{eq:discrete-energy}
    E_{\pi,\rho}^{n+\frac12}(\bm{v})
    :=
    \frac12
    \|\pi D_t \bm{v}^{n+\frac12}\|_\rho^2
    -
    \frac{\tau^2}{8}
    \|D_t \bm{v}^{n+\frac12}\|_a^2
    +
    \frac12
    \|\bar{\bm{v}}^{n+\frac12}\|_a^2.
\end{equation}
Assume the CFL condition
\begin{equation}
    \label{eq:cfl}
    \frac{\tau}{2H\sqrt{\beta}}<1.
\end{equation}
In terms of Lemma~\ref{lem:inverse-poincare} and the property of the projection operator $\pi$, we have
\begin{equation}
    \label{eq:energy-positive}
    E_{\pi,\rho}^{n+\frac12}(\bm{v})
    \ge
    \frac12
    \left(
        1-\frac{\tau^2}{4\beta H^2}
    \right)
    \|\pi D_t \bm{v}^{n+\frac12}\|_\rho^2
    +
    \frac12
    \|\bar{\bm{v}}^{n+\frac12}\|_a^2.
\end{equation}
Then we give the following discrete energy identity and stability.
\begin{lemma}
\label{lem:energy-identity}
Let $\{\bm{v}^n\}_{n\ge0}\subset \bm{V}_{\rm ms}$ satisfy
\begin{equation}
    \label{eq:abstract-error-eq}
b_\rho(D_{tt}\bm{v}^n,\bm{w})+a(\bm{v}^n,\bm{w})
    =
    (\bm{r}^n, \bm{w}),
    \qquad
    \forall \bm{w}\in \bm{V}_{\rm ms}.
\end{equation}
Then we have
\begin{equation}
    \label{eq:energy-identity}
    E_{\pi,\rho}^{n+\frac12}(\bm{v})
    =
    E_{\pi,\rho}^{\frac12}(\bm{v})
    +
    \tau
    \sum_{k=1}^n
    \left(
        \bm{r}^k,
        \frac{\bm{v}^{k+1}-\bm{v}^{k-1}}{2\tau}
    \right).
\end{equation}
Moreover,
\begin{equation}
    \label{eq:abstract-stability}
    E_{\pi,\rho}^{n+\frac12}(\bm{v})
    \le
    C
    \left[
        E_{\pi,\rho}^{\frac12}(\bm{v})
        +
        \bigl(
            \tau R_1^n
            +
            H\Lambda^{-1/2}R_2^n
        \bigr)^2
    \right],
\end{equation}
where
\[R_1^n=
    \sum_{k=1}^n
    \left\|
        \pi(\rho^{-1}\bm{r}^k)
    \right\|_\rho,\quad R_2^n=
    \left\|
        (I-\pi)(\rho^{-1}\bm{r}^1)
    \right\|_\rho
    +
    \tau
    \sum_{k=1}^{n-1}
    \left\|
        (I-\pi)
        \left(
            \rho^{-1}
            \frac{\bm{r}^{k+1}-\bm{r}^k}{\tau}
        \right)
    \right\|_\rho
    +
    \left\|
        (I-\pi)(\rho^{-1}\bm{r}^n)
    \right\|_\rho.
\]
\end{lemma}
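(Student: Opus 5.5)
The energy identity comes from testing \eqref{eq:abstract-error-eq} with the symmetric difference $\bm{w}=(\bm{v}^{k+1}-\bm{v}^{k-1})/(2\tau)\in\bm{V}_{\rm ms}$ and then telescoping.

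For the mass term, the symmetry of $b_\rho$ gives
\[
b_\rho\Bigl(D_{tt}\bm{v}^k,\tfrac{\bm{v}^{k+1}-\bm{v}^{k-1}}{2\tau}\Bigr)=\frac{1}{2\tau}\bigl(\|\pi D_t\bm{v}^{k+\frac12}\|_\rho^2-\|\pi D_t\bm{v}^{k-\frac12}\|_\rho^2\bigr),
\]
using $D_{tt}\bm{v}^k=(D_t\bm{v}^{k+\frac12}-D_t\bm{v}^{k-\frac12})/\tau$ and $\bm{v}^{k+1}-\bm{v}^{k-1}=\tau(D_t\bm{v}^{k+\frac12}+D_t\bm{v}^{k-\frac12})$.

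For the stiffness term, write
\[
\bm{v}^k=\tfrac12(\bar{\bm{v}}^{k+\frac12}+\bar{\bm{v}}^{k-\frac12})-\tfrac{\tau^2}{4}\cdot\tfrac{D_t\bm{v}^{k+\frac12}-D_t\bm{v}^{k-\frac12}}{\tau}.
\]
Also note $\bm{v}^{k+1}-\bm{v}^{k-1}=2(\bar{\bm{v}}^{k+\frac12}-\bar{\bm{v}}^{k-\frac12})$. Then
\[
a\Bigl(\bm{v}^k,\tfrac{\bm{v}^{k+1}-\bm{v}^{k-1}}{2\tau}\Bigr)=\frac{1}{2\tau}\Bigl(\|\bar{\bm{v}}^{k+\frac12}\|_a^2-\|\bar{\bm{v}}^{k-\frac12}\|_a^2\Bigr)-\frac{\tau}{8}\Bigl(\|D_t\bm{v}^{k+\frac12}\|_a^2-\|D_t\bm{v}^{k-\frac12}\|_a^2\Bigr).
\]
Multiplying by $\tau$ and summing over $k=1,\dots,n$ gives \eqref{eq:energy-identity} directly.

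For the stability bound \eqref{eq:abstract-stability}, I split the source as $\bm{r}^k=\rho\pi(\rho^{-1}\bm{r}^k)+\rho(I-\pi)(\rho^{-1}\bm{r}^k)$. The reason is that $(\bm{r},\bm{w})=(\rho\,\rho^{-1}\bm{r},\bm{w})$, and $\pi$ is the $\rho$-orthogonal projection, so
\[
(\bm{r},\bm{w})=(\rho\,\pi(\rho^{-1}\bm{r}),\pi\bm{w})+(\rho(I-\pi)(\rho^{-1}\bm{r}),(I-\pi)\bm{w}).
\]

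The first part is handled directly. By Cauchy--Schwarz it is bounded by $\|\pi(\rho^{-1}\bm{r}^k)\|_\rho\,\|\pi\tfrac{\bm{v}^{k+1}-\bm{v}^{k-1}}{2\tau}\|_\rho$. The last factor is at most the average of $\|\pi D_t\bm{v}^{k\pm\frac12}\|_\rho$, which by \eqref{eq:energy-positive} and the CFL condition \eqref{eq:cfl} is $\le C\max_j (E^{j+\frac12})^{1/2}$. This yields the contribution $\tau R_1^n\max_j (E^{j+\frac12})^{1/2}$.

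The second part is the main obstacle. It has no factor of $\tau$ to spare, so I apply summation by parts in time. Writing $\bm{g}^k=\rho(I-\pi)(\rho^{-1}\bm{r}^k)$ and $\tfrac{\bm{v}^{k+1}-\bm{v}^{k-1}}{2\tau}\cdot\tau=\bar{\bm{v}}^{k+\frac12}-\bar{\bm{v}}^{k-\frac12}$, I get
\[
\sum_{k=1}^n(\bm{g}^k,(I-\pi)(\bar{\bm{v}}^{k+\frac12}-\bar{\bm{v}}^{k-\frac12}))=(\bm{g}^n,(I-\pi)\bar{\bm{v}}^{n+\frac12})-(\bm{g}^1,(I-\pi)\bar{\bm{v}}^{\frac12})-\sum_{k=1}^{n-1}(\bm{g}^{k+1}-\bm{g}^k,(I-\pi)\bar{\bm{v}}^{k+\frac12}).
\]
Each pairing is bounded by $\|(I-\pi)(\rho^{-1}\bm{g}\cdots)\|_\rho\,\|(I-\pi)\bar{\bm{v}}\|_\rho$. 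Then the spectral estimate \eqref{eq:spectral-estimate}, summed over coarse blocks, gives $\|(I-\pi)\bar{\bm{v}}\|_\rho\le H\Lambda^{-1/2}\|\bar{\bm{v}}\|_a\le CH\Lambda^{-1/2}(E^{j+\frac12})^{1/2}$. This produces exactly the terms of $H\Lambda^{-1/2}R_2^n$, including the factor $\tau$ on the difference quotients.

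To close, let $M=\max_{0\le j\le n}E^{j+\frac12}$. Every $E^{j+\frac12}$ with $j\le n$ obeys the same bound, so
\[
M\le E^{\frac12}+C(\tau R_1^n+H\Lambda^{-1/2}R_2^n)M^{1/2}.
\]
Here $R_1^n$ and $R_2^n$ dominate their counterparts for $j\le n$ only after mild bookkeeping, and if needed I would replace them by their maxima. Young's inequality then gives $M\le C[E^{\frac12}+(\tau R_1^n+H\Lambda^{-1/2}R_2^n)^2]$.

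Two points need care. The positivity \eqref{eq:energy-positive} is essential: it makes $E$ control both $\|\pi D_t\bm{v}\|_\rho^2$ and $\|\bar{\bm{v}}\|_a^2$. I also need to check that the boundary term at $k=1$ involves $\bar{\bm{v}}^{\frac12}$, which is controlled by $E^{\frac12}$.
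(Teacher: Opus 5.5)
Your proposal is correct and follows the paper's proof essentially step for step. Both arguments test with $(\bm{v}^{k+1}-\bm{v}^{k-1})/(2\tau)$ and telescope, split $\bm{r}^k$ through the $\rho$-orthogonal projection $\pi$, bound the resolved part by Cauchy--Schwarz and \eqref{eq:energy-positive}, handle the unresolved part by summation by parts and \eqref{eq:spectral-estimate}, and close with a maximum and Young's inequality; the bookkeeping you flag is harmless, since $R_1^j\le R_1^n$ trivially and $R_2^j\le R_2^n$ follows from $\|(I-\pi)(\rho^{-1}\bm{r}^j)\|_\rho\le\|(I-\pi)(\rho^{-1}\bm{r}^n)\|_\rho+\sum_{k=j}^{n-1}\|(I-\pi)(\rho^{-1}(\bm{r}^{k+1}-\bm{r}^k))\|_\rho$.
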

\begin{proof}
Taking
\(\bm{w} = \frac{\bm{v}^{n+1}-\bm{v}^{n-1}}{2\tau}=\frac12 \left(D_t \bm{v}^{n+\frac12}+D_t \bm{v}^{n-\frac12}\right)
\)
in \eqref{eq:abstract-error-eq} and using the linearity of $\pi$, we have
\[
\begin{aligned}
    b_\rho(D_{tt}\bm{v}^n,\bm{w})
    &=
    \left(
        \rho\pi D_{tt}\bm{v}^n,
        \pi \bm{w}
    \right)
    =
    \left(
        \rho
        \frac{
            \pi D_t \bm{v}^{n+\frac12}
            -
            \pi D_t \bm{v}^{n-\frac12}
        }{\tau},
        \frac{
            \pi D_t \bm{v}^{n+\frac12}
            +
            \pi D_t \bm{v}^{n-\frac12}
        }{2}
    \right)
    \\
    &=
    \frac{1}{2\tau}
    \left(
        \|\pi D_t \bm{v}^{n+\frac12}\|_\rho^2
        -
        \|\pi D_t \bm{v}^{n-\frac12}\|_\rho^2
    \right).
\end{aligned}
\]
For the stiffness term, the standard identity gives
\[a(\bm{v}^n,\bm{w})=
    \frac{1}{2\tau}
    \left(\|\bar{\bm{v}}^{n+\frac12}\|_a^2-\|\bar{\bm{v}}^{n-\frac12}\|_a^2
    \right)-\frac{\tau}{8}\left(\|D_t \bm{v}^{n+\frac12}\|_a^2-\|D_t \bm{v}^{n-\frac12}\|_a^2\right).\]
Therefore,
\(
    E_{\pi,\rho}^{n+\frac12}(\bm{v})
    -
    E_{\pi,\rho}^{n-\frac12}(\bm{v})
    =
    \tau
    \left(
        \bm{r}^n,
        \frac{\bm{v}^{n+1}-\bm{v}^{n-1}}{2\tau}
    \right).
\)
Summing this identity from $1$ to $n$ proves
\eqref{eq:energy-identity}.
It remains to bound the right-hand side. Let
\(\bm{z}^k:=\frac{\bm{v}^{k+1}-\bm{v}^{k-1}}{2\tau}=\frac{\bar{\bm{v}}^{k+\frac12}-\bar{\bm{v}}^{k-\frac12}}{\tau}
\)
and we write
\((\bm{r}^k,\bm{z}^k)=\left(\rho(\rho^{-1}\bm{r}^k),\bm{z}^k\right).
\)
Using the $\rho$-orthogonality of $\pi$, we decompose
\[
\begin{aligned}
    (\bm{r}^k,\bm{z}^k)
    =
    \left(
        \rho\pi(\rho^{-1}\bm{r}^k),
        \pi \bm{z}^k
    \right)
    +
    \left(
        \rho(I-\pi)(\rho^{-1}\bm{r}^k),
        (I-\pi)\bm{z}^k
    \right).
\end{aligned}
\]
By \eqref{eq:energy-positive}, we have
\(\tau
    \sum_{k=1}^n
    \left|
        \left(
            \rho\pi(\rho^{-1}\bm{r}^k),
            \pi \bm{z}^k
        \right)
    \right|\le
    \tau
    \sum_{k=1}^n
    \left\|
        \pi(\rho^{-1}\bm{r}^k)
    \right\|_\rho
    \|\pi \bm{z}^k\|_\rho.
\)
Moreover, we directly have
\( \pi \bm{z}^k=\frac12\left(\pi D_t \bm{v}^{k+\frac12}+\pi D_t \bm{v}^{k-\frac12}
\right),
\)
and hence \eqref{eq:energy-positive} implies
\[
    \|\pi \bm{z}^k\|_\rho
    \le
    C
    \max_{0\le \ell\le n}
    \left(
        E_{\pi,\rho}^{\ell+\frac12}(\bm{v})
    \right)^{1/2}.
\]
Therefore,
\begin{equation}\label{ineq_1}
\begin{aligned}
    \tau
    \sum_{k=1}^n
    \left|
        \left(
            \rho\pi(\rho^{-1}\bm{r}^k),
            \pi \bm{z}^k
        \right)
    \right|
    &\le
    C
    \tau R_1^n
    \max_{0\le \ell\le n}
    \left(
        E_{\pi,\rho}^{\ell+\frac12}(\bm{v})
    \right)^{1/2}.
\end{aligned}
\end{equation}
Setting
\(\bm{q}^k=(I-\pi)(\rho^{-1}\bm{r}^k)\),
then $(\rho\bm{q}^k, \bm{w})=0$ for all $\bm{w}\in \bm{V}_{\rm aux}$.
Using summation by parts,
\[\tau\sum_{k=1}^n(\rho \bm{q}^k,\bm{z}^k)=\sum_{k=1}^n
    \left(
        \rho \bm{q}^k,
        \bar{\bm{v}}^{k+\frac12}-\bar{\bm{v}}^{k-\frac12}
    \right)=
    (\rho \bm{q}^n,\bar{\bm{v}}^{n+\frac12})
    -
    (\rho \bm{q}^1,\bar{\bm{v}}^{\frac12})
    -
    \sum_{k=1}^{n-1}
    \left(
        \rho(\bm{q}^{k+1}-\bm{q}^k),
        \bar{\bm{v}}^{k+\frac12}
    \right).\]
Since $(\rho\bm{q}^k, \bm{w})=0$ for all $\bm{w}\in \bm{V}_{\rm aux}$, then for any
$\bar{\bm{v}}^{k+\frac12}$, we have
\((\rho \bm{q}^k,\bar{\bm{v}}^{k+\frac12})
    =
    \left(
        \rho \bm{q}^k,
        (I-\pi)\bar{\bm{v}}^{k+\frac12}
    \right).
\)
Combining the $\rho$-norm spectral estimate (\ref{eq:spectral-estimate}),
we obtain
\[\left|\tau\sum_{k=1}^n(\rho \bm{q}^k,\bm{z}^k)
    \right|\le
    H\Lambda^{-1/2}
    \biggl[
        \|\bm{q}^1\|_\rho
        +
        \sum_{k=1}^{n-1}
        \|\bm{q}^{k+1}-\bm{q}^k\|_\rho
        +
        \|\bm{q}^n\|_\rho
    \biggr]\cdot
    \max_{0\le \ell\le n}
    \left(
        E_{\pi,\rho}^{\ell+\frac12}(\bm{v})
    \right)^{1/2}.\]
Since
\(\bm{q}^{k+1}-\bm{q}^k=(I-\pi)\left(\rho^{-1}(\bm{r}^{k+1}-\bm{r}^k)\right),\)
we have
\[
    \sum_{k=1}^{n-1}
    \|\bm{q}^{k+1}-\bm{q}^k\|_\rho
    =
    \tau
    \sum_{k=1}^{n-1}
    \left\|
        (I-\pi)
        \left(
            \rho^{-1}
            \frac{\bm{r}^{k+1}-\bm{r}^k}{\tau}
        \right)
    \right\|_\rho.
\]
Therefore, by the definition of $R_2^n$,
\begin{equation}\label{ineq_2}
\begin{aligned}
    \left|
    \tau
    \sum_{k=1}^n
    (\rho \bm{q}^k,\bm{z}^k)
    \right|
    &\le
    H\Lambda^{-1/2}
    R_2^n
    \max_{0\le \ell\le n}
    \left(
        E_{\pi,\rho}^{\ell+\frac12}(\bm{v})
    \right)^{1/2}.
\end{aligned}
\end{equation}
Combining (\ref{ineq_1}), (\ref{ineq_2}) and
\eqref{eq:energy-identity}, we obtain
\[
    E_{\pi,\rho}^{n+\frac12}(\bm{v})
    \le
    E_{\pi,\rho}^{\frac12}(\bm{v})
    +
    C
    \left(
        \tau R_1^n
        +
        H\Lambda^{-1/2}R_2^n
    \right)
    \max_{0\le \ell\le n}
    \left(
        E_{\pi,\rho}^{\ell+\frac12}(\bm{v})
    \right)^{1/2}.
\]
Taking the maximum over $0\le n\le N$ and applying Young's inequality yields \eqref{eq:abstract-stability}. This completes the proof.
\end{proof}
Next we give the stability of the explicit multiscale scheme (\ref{eq:explicit-ms}).
\begin{theorem}
\label{thm:ms-stability}
Let $\{\bm{u}_{\rm ms}^n\}\subset \bm{V}_{\rm ms}$ solve \eqref{eq:explicit-ms}.
Assume the CFL condition \eqref{eq:cfl}. Then
\begin{equation}
    \label{eq:ms-stability}
    \|D_t\bm{u}_{\rm ms}^{n+\frac12}\|_\rho^2
    +
    \|\bar{\bm{u}}_{\rm ms}^{n+\frac12}\|_a^2
    \le
    C
    \left[
        E^{\frac12}_{\pi,\rho}(\bm{u}_{\rm ms})
        +
        \tau^2
        \left(
            \sum_{k=1}^n
            \|\pi (\rho^{-1}\bm{f}^k)\|_\rho
        \right)^2
    \right].
\end{equation}
\end{theorem}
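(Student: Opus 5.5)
The plan is to apply Lemma~\ref{lem:energy-identity} with $\bm{v}^n=\bm{u}_{\rm ms}^n$ and then convert the energy bound into the full $\rho$-norm using \eqref{eq:energy-positive} and \eqref{eq:rho-full-by-proj}. The scheme \eqref{eq:explicit-ms} has right-hand side $(\bm{f}^n,\pi\bm{w})$, not $(\bm{r}^n,\bm{w})$. We therefore write it in the abstract form \eqref{eq:abstract-error-eq} with
\[
\bm{r}^n:=\rho\,\pi(\rho^{-1}\bm{f}^n).
\]
This is justified by the $\rho$-orthogonality of $\pi$:
\[
(\bm{f}^n,\pi\bm{w})=(\rho\,\rho^{-1}\bm{f}^n,\pi\bm{w})=(\rho\,\pi(\rho^{-1}\bm{f}^n),\pi\bm{w})=(\rho\,\pi(\rho^{-1}\bm{f}^n),\bm{w}).
\]
The middle equality uses that $\pi$ is the $\rho$-orthogonal projection onto $\bm{V}_{\rm aux}$ and $\pi\bm{w}\in\bm{V}_{\rm aux}$. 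The last equality uses that $\pi(\rho^{-1}\bm{f}^n)\in\bm{V}_{\rm aux}$, so $(\rho\,\pi(\rho^{-1}\bm{f}^n),(I-\pi)\bm{w})=0$. We apply $\pi$ to $\rho^{-1}\bm{f}^n\in\bm{L}^2$, which the paper allows.

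With this choice, $\rho^{-1}\bm{r}^k=\pi(\rho^{-1}\bm{f}^k)$. Since $\pi$ is idempotent, we get $(I-\pi)(\rho^{-1}\bm{r}^k)=0$ for every $k$, and also $(I-\pi)\bigl(\rho^{-1}(\bm{r}^{k+1}-\bm{r}^k)/\tau\bigr)=0$. Hence $R_2^n=0$ and $R_1^n=\sum_{k=1}^n\|\pi(\rho^{-1}\bm{f}^k)\|_\rho$. Estimate \eqref{eq:abstract-stability} then gives
\[
E_{\pi,\rho}^{n+\frac12}(\bm{u}_{\rm ms})\le C\Bigl[E_{\pi,\rho}^{\frac12}(\bm{u}_{\rm ms})+\tau^2\Bigl(\sum_{k=1}^n\|\pi(\rho^{-1}\bm{f}^k)\|_\rho\Bigr)^2\Bigr].
\]
This is the right-hand side of \eqref{eq:ms-stability}.

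It remains to bound the left-hand side of \eqref{eq:ms-stability} by $C\,E_{\pi,\rho}^{n+\frac12}(\bm{u}_{\rm ms})$. Under the CFL condition \eqref{eq:cfl}, the factor $1-\tau^2/(4\beta H^2)$ is positive, so \eqref{eq:energy-positive} bounds $\|\pi D_t\bm{u}_{\rm ms}^{n+\frac12}\|_\rho^2+\|\bar{\bm{u}}_{\rm ms}^{n+\frac12}\|_a^2$ by $C E_{\pi,\rho}^{n+\frac12}$. Since $D_t\bm{u}_{\rm ms}^{n+\frac12}\in\bm{V}_{\rm ms}$, estimate \eqref{eq:rho-full-by-proj} gives
\[
\|D_t\bm{u}_{\rm ms}^{n+\frac12}\|_\rho\le\bigl(1+(\Lambda\beta)^{-1/2}\bigr)\|\pi D_t\bm{u}_{\rm ms}^{n+\frac12}\|_\rho .
\]
Combining these yields \eqref{eq:ms-stability}, with $C$ depending on $\beta$, $\Lambda$ and the CFL margin $\bigl(1-\tau^2/(4\beta H^2)\bigr)^{-1}$.

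The main point needing care is the reformulation of the source term as $\bm{r}^n$, so that the $R_2^n$ contribution vanishes identically. If it did not vanish, one would need time regularity of $\bm{f}$, which the statement does not assume. A secondary point is that the CFL condition must be a strict inequality so that the constant from \eqref{eq:energy-positive} is finite.
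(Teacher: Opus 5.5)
Your proposal is correct and follows the paper's proof essentially step for step. It makes the same choice $\bm{r}^n=\rho\,\pi(\rho^{-1}\bm{f}^n)$, so that $R_2^n=0$ in Lemma~\ref{lem:energy-identity}, and then uses the lower bound \eqref{eq:energy-positive} together with \eqref{eq:rho-full-by-proj} to recover the full $\rho$-norm of $D_t\bm{u}_{\rm ms}^{n+\frac12}$; your extra justifications (the $\rho$-orthogonality and idempotence of $\pi$, and $D_t\bm{u}_{\rm ms}^{n+\frac12}\in\bm{V}_{\rm ms}$) only make explicit what the paper leaves implicit.
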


\begin{proof}
Recall the multiscale scheme \eqref{eq:explicit-ms}:
\[
    b_\rho(D_{tt}\bm{u}_{\rm ms}^n,\bm{w})+a(\bm{u}_{\rm ms}^n,\bm{w})=
    (\rho(\rho^{-1}\bm{f}^n),\pi \bm{w})
    =
    (\rho \pi(\rho^{-1}\bm{f}^n),\pi \bm{w})=(\rho \pi(\rho^{-1}\bm{f}^n), \bm{w}).
\]
Letting $\bm{r}^n=\rho \pi(\rho^{-1}\bm{f}^n)$, by Lemma \ref{lem:energy-identity}, we know $R_2^n=0$ and can obtain that
\[
    E^{n+\frac12}_{\pi,\rho}(\bm{u}_{\rm ms})
    \le
    C
    \left[
        E^{\frac12}_{\pi,\rho}(\bm{u}_{\rm ms})
        +
        \tau^2
        \left(
            \sum_{k=1}^n
            \|\pi (\rho^{-1}\bm{f}^k)\|_\rho
        \right)^2
    \right].
\]
Using the lower bound \eqref{eq:energy-positive}, we get
\[
    \|\pi D_t\bm{u}_{\rm ms}^{n+\frac12}\|_\rho^2
    +
    \|\bar{\bm{u}}_{\rm ms}^{n+\frac12}\|_a^2
    \le
    C
    \left[
        E^{\frac12}_{\pi,\rho}(\bm{u}_{\rm ms})
        +
        \tau^2
        \left(
            \sum_{k=1}^n
            \|\pi (\rho^{-1}\bm{f}^k)\|_\rho
        \right)^2
    \right].
\]
Finally, \eqref{eq:rho-full-by-proj} gives
\(\|D_t\bm{u}_{\rm ms}^{n+\frac12}\|_\rho
    \le
    C\|\pi D_t\bm{u}_{\rm ms}^{n+\frac12}\|_\rho.\)
This proves \eqref{eq:ms-stability}.
\end{proof}
To obtain the final convergence result, we first provide some elliptic projection estimates.
Define the fine-scale elliptic solution operator
\(
G_h : \bm{U}_h \to \bm{U}_h
\)
by
\begin{equation}
a(G_h \bm{g}, \bm{w}) = (\rho \bm{g}, \bm{w}), \quad \forall \bm{w} \in \bm{U}_h.
\end{equation}
Note that the domain of $G_h$ can be extended to $\bm{L}^2(\Omega)$. Let $P_H : \bm{U}_h \to \bm{V}_{\rm ms}$ be the $a$-orthogonal projection:
\begin{equation}
a(P_H v, \bm{w}) = a(\bm{v}, \bm{w}), \quad \forall \bm{w} \in \bm{V}_{\rm ms}.
\end{equation}
Let $P_\mathrm{glo} : \bm{U}_h \to V_{\mathrm{glo}}$ be the $a$-orthogonal projection for the global multiscale space:
\begin{equation}\label{global_projection}
a(P_\mathrm{glo} v, \bm{w}) = a(\bm{v}, \bm{w}), \quad \forall \bm{w} \in V_{\mathrm{glo}}.
\end{equation}

\begin{lemma}
\label{lem:elliptic_projection}
Assume that the oversampling size $m$ is selected appropriately, ensuring that the right-hand side of (\ref{decay_inequality}) is sufficiently small. Then the following $a$-norm and $\rho$-norm estimates hold:
\[
\|(I - P_H)  G_h \bm{g}\|_a \le C \Lambda^{-1/2} H \|\bm{g}\|_\rho, \quad \forall \bm{g} \in \bm{U}_h.
\]
Moreover, by a duality argument,
\[
\|(I - P_H)  G_h \bm{g}\|_\rho \le C \Lambda^{-1} H^2 \|\bm{g}\|_\rho, \quad \forall \bm{g} \in \bm{U}_h.
\]
\end{lemma}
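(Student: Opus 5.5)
The proof follows the standard CEM-GMsFEM elliptic argument. We first prove the estimate for the global space $\bm{V}_{\rm glo}$ and then pass to $\bm{V}_{\rm ms}$ using the exponential decay of Proposition~\ref{prop:cem-decay}.

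\textbf{Step 1: the global space.} Let $\bm{u}=G_h\bm{g}$ and use the splitting $\bm{U}_h=\bm{V}_{\rm glo}\oplus\widetilde{\bm{V}}$, which is $a$-orthogonal. Then $\bm{e}:=(I-P_{\rm glo})\bm{u}$ lies in $\widetilde{\bm{V}}=\ker\pi$, and
\[
\|\bm{e}\|_a^2=a(\bm{u},\bm{e})=(\rho\bm{g},\bm{e})=(\rho\bm{g},(I-\pi)\bm{e}).
\]
Summing the local spectral estimate \eqref{eq:spectral-estimate} over $K_i$ gives $\|(I-\pi)\bm{e}\|_\rho\le H\Lambda^{-1/2}\|\bm{e}\|_a$, because $\sum_i a_i(\bm{v},\bm{v})\le C a(\bm{v},\bm{v})$. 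This bound needs the local/global energy comparison, which I assume holds by the locality of the condensed form (\cite{Fu2025}). Hence
\[
\|(I-P_{\rm glo})G_h\bm{g}\|_a\le CH\Lambda^{-1/2}\|\bm{g}\|_\rho.
\]

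\textbf{Step 2: localization.} Since $P_H$ is the $a$-orthogonal projection onto $\bm{V}_{\rm ms}$, we have $\|(I-P_H)\bm{u}\|_a\le\|\bm{u}-\bm{v}\|_a$ for every $\bm{v}\in\bm{V}_{\rm ms}$. Write $P_{\rm glo}\bm{u}=\sum c_j^i\bm{\psi}_j^i$ and take $\bm{v}=\sum c_j^i\bm{\psi}_{j,m}^i$. The finite-overlap argument already used in the proof of Lemma~\ref{lem:inverse-poincare} gives
\[
\|P_{\rm glo}\bm{u}-\bm{v}\|_a\le \eta_m H^{-1}\|\pi P_{\rm glo}\bm{u}\|_\rho .
\]
It remains to bound $\|\pi P_{\rm glo}\bm{u}\|_\rho$. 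We have $\pi P_{\rm glo}\bm{u}=\pi\bm{u}$ because $\bm{u}-P_{\rm glo}\bm{u}\in\ker\pi$. Therefore
\[
\|\pi\bm{u}\|_\rho\le\|\bm{u}\|_\rho\le C\|\bm{u}\|_a\le C\|\bm{g}\|_\rho,
\]
using a Poincaré-type bound for $a$ (coercivity) and testing the definition of $G_h$ with $\bm{u}$. Choosing $m$ so that $\eta_m\lesssim H^2\Lambda^{-1/2}$ (this is what ``$m$ selected appropriately'' means) makes the localization error $\le CH\Lambda^{-1/2}\|\bm{g}\|_\rho$. The triangle inequality then gives the $a$-norm estimate.

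\textbf{Step 3: duality.} Let $\bm{e}=(I-P_H)G_h\bm{g}$ and $\bm{z}=G_h\bm{e}$. By Galerkin orthogonality,
\[
\|\bm{e}\|_\rho^2=(\rho\bm{e},\bm{e})=a(\bm{z},\bm{e})=a((I-P_H)\bm{z},\bm{e})\le\|(I-P_H)G_h\bm{e}\|_a\,\|\bm{e}\|_a.
\]
Applying the $a$-norm estimate to both factors gives $\|\bm{e}\|_\rho^2\le C\Lambda^{-1}H^2\|\bm{e}\|_\rho\|\bm{g}\|_\rho$, which yields the $\rho$-norm estimate.

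\textbf{Main obstacle.} The delicate step is Step 2. The localization constant must be expressed through $\|\bm{g}\|_\rho$ with the correct power of $H$, and the requirement that $\eta_m$ absorb $H^{-2}$ (roughly $m=O(\log(1/H))$) must be made explicit. The finite-overlap sum over basis functions must also be controlled through $\|\pi\bm{u}\|_\rho$ using the $\rho$-orthonormality of the $\bm{\phi}_j^i$.
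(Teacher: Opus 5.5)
Your proposal is correct and follows the paper's proof step for step. It uses the same global bound from $\pi(I-P_{\rm glo})G_h\bm{g}=0$ plus the spectral estimate, the same C\'ea-type comparison $\|(I-P_H)G_h\bm{g}\|_a\le\|G_h\bm{g}-\bm{v}_{\rm ms}\|_a$ with the localized counterpart of $P_{\rm glo}G_h\bm{g}$, and the identical duality argument. The only difference is that you make explicit what the paper compresses into ``$C_{\rm loc}\theta^m\|\bm{g}\|_\rho$, bounded by $C\Lambda^{-1/2}H\|\bm{g}\|_\rho$ for proper $m$'': the $H^{-1}$ factor from the localization bound in Lemma~\ref{lem:inverse-poincare}, the Poincar\'e step $\|\pi G_h\bm{g}\|_\rho\le C\|\bm{g}\|_\rho$, and the resulting requirement $\eta_m\lesssim H^2\Lambda^{-1/2}$. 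You also make explicit the local-to-global energy comparison that the paper uses silently when it applies \eqref{eq:spectral-estimate} globally.
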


\begin{proof}
By the definition of $P_\mathrm{glo}$ and the orthogonal property of $\bm{V}_\mathrm{glo},\widetilde{\bm{V}}$ with respect to $a-$norm, we have
\(
\pi (I - P_{\mathrm{glo}})  G_h \bm{g} = 0.
\)
Therefore, by the spectral estimate \eqref{eq:spectral-estimate},
\[
\|(I - P_{\mathrm{glo}})  G_h \bm{g}\|_\rho = \|(I - \pi) (I - P_{\mathrm{glo}})  G_h \bm{g}\|_\rho \le \Lambda^{-1/2} H \|(I - P_{\mathrm{glo}})  G_h \bm{g}\|_a.
\]
By (\ref{global_projection}), we have
\(
\|(I - P_{\mathrm{glo}})  G_h \bm{g}\|_a^2 = a((I - P_{\mathrm{glo}})  G_h \bm{g}, (I - P_{\mathrm{glo}})  G_h \bm{g}) = a( G_h \bm{g}, (I - P_{\mathrm{glo}})  G_h \bm{g}) = (\rho \bm{g}, (I - P_{\mathrm{glo}})  G_h \bm{g}).
\)
Consequently,
\[
\|(I - P_{\mathrm{glo}})  G_h \bm{g}\|_a^2 \le \|\bm{g}\|_\rho \|(I - P_{\mathrm{glo}})  G_h \bm{g}\|_\rho \le \Lambda^{-1/2} H \|\bm{g}\|_\rho \|(I - P_{\mathrm{glo}})  G_h \bm{g}\|_a,
\]
which gives
\begin{equation} \label{proj_est}
\|(I - P_{\mathrm{glo}})  G_h \bm{g}\|_a \le \Lambda^{-1/2} H \|\bm{g}\|_\rho.
\end{equation}
For the multiscale space $\bm{V}_{\rm ms}$, let $\bm{v}_{\mathrm{ms}} \in \bm{V}_{\rm ms}$ be the localized counterpart of $P_{\mathrm{glo}}  G_h \bm{g}$. That is, suppose $P_{\mathrm{glo}}  G_h \bm{g}=\sum_{i=1}^N\sum_{j=1}^{l_i}c^i_j\bm{\psi}^i_j$, then we take $\bm{v}_{\mathrm{ms}}=\sum_{i=1}^N\sum_{j=1}^{l_i}c^i_j\bm{\psi}^i_{j,m}$. 
By Proposition \ref{prop:cem-decay}, together with the standard finite-overlap
argument for linear combinations of localized CEM basis functions
(see also \cite{EricYETY2023,Eric2018,ChungKimZhong2026,chung2025locking}),
we obtain
\begin{equation}\label{local_est}
\norm{P_{\mathrm{glo}}G_h\bm{g}-\bm{v}_{\mathrm{ms}}}_a
\leq
C_{\mathrm{loc}}\theta^m\norm{\bm{g}}_\rho .
\end{equation}
Here $0<\theta<1$.
For proper $m$, this term is bounded by $C \Lambda^{-1/2} H \|\bm{g}\|_\rho$. 
By the property of $P_H$, we know
\begin{align}
\|(I - P_H)  G_h \bm{g}\|_a^2&=a((I - P_H)  G_h \bm{g},(I - P_H)  G_h \bm{g})=a((I - P_H)  G_h \bm{g}, G_h \bm{g}) \nonumber \\
&=a((I - P_H)  G_h \bm{g}, G_h \bm{g}-\bm{v}_\mathrm{ms})\leq \|(I - P_H)  G_h \bm{g}\|_a \| G_h \bm{g}-\bm{v}_\mathrm{ms}\|_a. \nonumber
\end{align}
Then combining (\ref{proj_est}) and (\ref{local_est}),  we obtain
\begin{align}
\|(I - P_H)  G_h \bm{g}\|_a \le \| G_h \bm{g} - \bm{v}_{\mathrm{ms}}\|_a
\le \|(I - P_{\mathrm{glo}})  G_h \bm{g}\|_a + \|P_{\mathrm{glo}}  G_h \bm{g} - \bm{v}_{\mathrm{ms}}\|_a
\le C \Lambda^{-1/2} H \|\bm{g}\|_\rho.
\end{align}
This proves the $a$-norm estimate.

Next we prove the $\rho$-norm estimate. Let
\(
\bm{e} = (I - P_H)  G_h \bm{g}.
\)
By duality,
\(
\|\bm{e}\|_\rho^2 = (\rho \bm{e}, \bm{e}) = a(G_h \bm{e}, \bm{e}).
\)
Since $P_H$ is the $a$-orthogonal projection, i.e., 
\(
a(P_H G_h \bm{e}, \bm{e}) = 0,
\)
then
\[
\|\bm{e}\|_\rho^2 = a((I - P_H) G_h \bm{e}, \bm{e}) \le \|(I - P_H) G_h \bm{e}\|_a \|\bm{e}\|_a.
\]
Using the $a$-norm estimate just proved twice gives
\[
\|(I - P_H) G_h \bm{e}\|_a \le C \Lambda^{-1/2} H \|\bm{e}\|_\rho
\]
and
\[
\|\bm{e}\|_a = \|(I - P_H)  G_h \bm{g}\|_a \le C \Lambda^{-1/2} H \|\bm{g}\|_\rho.
\]
Therefore, we obtain
\(
\|\bm{e}\|_\rho^2 \le C \Lambda^{-1} H^2 \|\bm{e}\|_\rho \|\bm{g}\|_\rho.
\)
This completes the proof.
\end{proof}
Let \(\bm{\theta}^n := (I - P_H) \bm{u}_h(t_n)\). Then, in view of the projection estimates established in Lemma \ref{lem:elliptic_projection}, we obtain the following projection error bound in time.
\begin{lemma}
\label{lem:time_projection}
Let $\bm{u}_h(t)$ solve \eqref{eq:reduced-wave} and define
\(
\bm{g}(t) := \rho^{-1}\bm{f}(t) - \partial_{tt} \bm{u}_h(t).
\) Assume \( \bm{g} \in C^4([0, T]; \bm{L}^2(\Omega))\).
Then
\(
\bm{u}_h(t) = G_h \bm{g}(t)
\) and 
\[
\|\bm{\theta}^n\|_\rho \le C \Lambda^{-1} H^2 \|\bm{g}(t_n)\|_\rho.
\]
Moreover, define the norm \(\|\bm{v}\|_{L^\infty(0,T;\rho)} 
:= \operatorname*{ess\,sup}_{0 < t < T} \|\bm{v}(t)\|_{\rho}
\). Then we have
\begin{equation} \label{2-derivative-estimate}
\|D_{tt} \bm{\theta}^n\|_\rho \le C \Lambda^{-1} H^2
\big( \|\partial_{tt} \bm{g}\|_{L^\infty(0,T;\rho)} + \tau^2 \|\partial_{tttt} \bm{g}\|_{L^\infty(0,T;\rho)} \big).
\end{equation}
\end{lemma}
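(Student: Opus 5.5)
The proof has three steps: identify $\bm{u}_h(t)$ as an elliptic solution, apply Lemma~\ref{lem:elliptic_projection} pointwise in time, and commute $D_{tt}$ with the time-independent operator $(I-P_H)G_h$ before bounding the resulting difference quotient by Taylor's theorem.

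\textbf{Step 1: $\bm{u}_h(t)=G_h\bm{g}(t)$.} Rewrite \eqref{eq:reduced-wave} (equivalently \eqref{eq:fine-dynamic}) as
\[
a(\bm{u}_h(t),\bm{v})=(\bm{f}(t),\bm{v})-(\rho\,\partial_{tt}\bm{u}_h(t),\bm{v})=\bigl(\rho(\rho^{-1}\bm{f}(t)-\partial_{tt}\bm{u}_h(t)),\bm{v}\bigr)=(\rho\,\bm{g}(t),\bm{v})
\]
for all $\bm{v}\in\bm{U}_h$. This uses that $\rho$ is bounded above and below, so $\rho^{-1}\bm{f}\in\bm{L}^2$. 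By the definition of $G_h$, extended to $\bm{L}^2(\Omega)$, and the coercivity of $a$ on $\bm{U}_h$, this gives $\bm{u}_h(t)=G_h\bm{g}(t)$.

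\textbf{Step 2: the pointwise bound.} From Step 1, $\bm{\theta}^n=(I-P_H)G_h\bm{g}(t_n)$. The $\rho$-norm estimate of Lemma~\ref{lem:elliptic_projection} then gives $\|\bm{\theta}^n\|_\rho\le C\Lambda^{-1}H^2\|\bm{g}(t_n)\|_\rho$. One point needs checking: the lemma is stated for $\bm{g}\in\bm{U}_h$, but here $\bm{g}(t)$ is only in $\bm{L}^2$. Since $(\rho\bm{g},\bm{w})$ is tested only against $\bm{w}\in\bm{U}_h$, we have $G_h\bm{g}=G_h Q_\rho\bm{g}$, where $Q_\rho$ is the $\rho$-weighted $L^2$-projection onto $\bm{U}_h$, and $\|Q_\rho\bm{g}\|_\rho\le\|\bm{g}\|_\rho$. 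So the lemma applies with the same constant.

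\textbf{Step 3: the second difference quotient.} The map $L:=(I-P_H)G_h$ is linear and independent of time, so it commutes with $D_{tt}$:
\[
D_{tt}\bm{\theta}^n=L\,D_{tt}\bm{g}(t_n), \qquad \|D_{tt}\bm{\theta}^n\|_\rho\le C\Lambda^{-1}H^2\|D_{tt}\bm{g}(t_n)\|_\rho .
\]
It then suffices to show
\[
\|D_{tt}\bm{g}(t_n)\|_\rho\le \|\partial_{tt}\bm{g}\|_{L^\infty(0,T;\rho)}+C\tau^2\|\partial_{tttt}\bm{g}\|_{L^\infty(0,T;\rho)} .
\]
Expand $\bm{g}(t_n\pm\tau)$ by Taylor's theorem with integral remainder, which is legitimate since $\bm{g}\in C^4([0,T];\bm{L}^2)$. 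The odd-order terms cancel, leaving
\[
D_{tt}\bm{g}(t_n)=\partial_{tt}\bm{g}(t_n)+\frac{\tau^2}{12}\,\overline{\partial_{tttt}\bm{g}},
\]
where $\overline{\partial_{tttt}\bm{g}}$ is a weighted average of $\partial_{tttt}\bm{g}$ over $[t_{n-1},t_{n+1}]$ with a nonnegative kernel. Minkowski's inequality then bounds the $\rho$-norm of this average by the $L^\infty(0,T;\rho)$ norm of $\partial_{tttt}\bm{g}$. Combining this with the bound on $D_{tt}\bm{\theta}^n$ yields \eqref{2-derivative-estimate}.

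The argument is routine. The only real care is the vector-valued Taylor remainder in Step 3 and the extension of Lemma~\ref{lem:elliptic_projection} from $\bm{U}_h$ to $\bm{L}^2$ data in Step 2; neither is a genuine obstacle.
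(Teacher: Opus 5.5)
Your proposal is correct and follows the paper's proof step for step. Both arguments identify $\bm{u}_h(t)=G_h\bm{g}(t)$ from \eqref{eq:reduced-wave}, apply the $\rho$-norm estimate of Lemma~\ref{lem:elliptic_projection} at $t_n$, commute $D_{tt}$ with the time-independent operator $(I-P_H)G_h$, and bound the second difference of $\bm{g}$ at $t_n$ by Taylor expansion.

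You add two pieces of rigor the paper leaves implicit, and neither changes the argument. First, you note that $\bm{g}(t)\notin\bm{U}_h$ in general, because of the term $\rho^{-1}\bm{f}$, and reduce to $\bm{U}_h$ data via $G_h\bm{g}=G_hQ_\rho\bm{g}$ with $\|Q_\rho\bm{g}\|_\rho\le\|\bm{g}\|_\rho$. Second, you write out the integral form of the Taylor remainder explicitly.
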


\begin{proof}
From \eqref{eq:reduced-wave},
\[
a(\bm{u}_h(t), \bm{v}) = (\rho(\rho^{-1}\bm{f}(t) - \partial_{tt} \bm{u}_h(t)), \bm{v}) = (\rho \bm{g}(t), \bm{v}), \quad \forall \bm{v} \in \bm{U}_h.
\]
Hence $\bm{u}_h(t) = G_h \bm{g}(t)$. Therefore,
\(
\bm{\theta}^n = (I - P_H) G_h \bm{g}(t_n).
\)
Applying Lemma~\ref{lem:elliptic_projection} gives
\[
\|\bm{\theta}^n\|_\rho \le C \Lambda^{-1} H^2 \|\bm{g}(t_n)\|_\rho.
\]
Since $P_H$ and $G_h$ are time-independent linear operators, we have
\(
D_{tt} \bm{\theta}^n = (I - P_H) G_h (D_{tt} \bm{g})^n,
\)
where
\(
(D_{tt} \bm{g})^n = \frac{\bm{g}(t_{n+1}) - 2\bm{g}(t_n) + g(t_{n-1})}{\tau^2}.
\)
Again by Lemma~\ref{lem:elliptic_projection},
\[
\|D_{tt} \bm{\theta}^n\|_\rho \le C \Lambda^{-1} H^2 \|(D_{tt} \bm{g})^n\|_\rho.
\]
Taylor expansion gives
\(
\|(D_{tt} \bm{g})^n\|_\rho \le C (\|\partial_{tt} \bm{g}\|_{L^\infty(0,T;\rho)} + \tau^2 \|\partial_{tttt} \bm{g}\|_{L^\infty(0,T;\rho)}).
\)
This completes the proof.
\end{proof}

Let
\(
\bm{\varepsilon}^n := \bm{u}_h^n - \bm{u}^n_{\mathrm{ms}}.
\)
We split the error as
\(
\bm{\varepsilon}^n = \bm{\theta}^n - \bm{\delta}^n,
\)
where
\(
\bm{\theta}^n := (I - P_H) \bm{u}_h^n,\) \(\bm{\delta}^n := \bm{u}^n_{\mathrm{ms}} - P_H \bm{u}_h^n.
\) Then we give the following energy error estimate.

\begin{theorem}
\label{thm:energy_error}
Assume the CFL condition \eqref{eq:cfl}. Let
$\bm{q}:=\rho^{-1}\bm{f}$ and
$\bm{g}:=\bm{q}-\partial_{tt}\bm{u}_h$.
Assume that
$\bm{q}\in C^1([0,T];\bm{H}^1(\Omega))$,
$\bm{g}\in C^4([0,T];\bm{L}^2(\Omega))$,
and
$\bm{u}_h\in C^4([0,T];\bm{L}^2(\Omega))$,
with $\partial_{tt}\bm{u}_h$ and
$\partial_{ttt}\bm{u}_h$ uniformly bounded in the energy norm.
Assume that the corresponding regularity bounds are independent of
$h$, $H$, and $\tau$. Then
\begin{equation}
    \|D_t \bm{\delta}^{n+\frac12}\|_\rho
    +
    \|\bar{\bm{\delta}}^{n+\frac12}\|_a
    \le C\left(\Lambda^{-1}H+\tau^2\right).
\end{equation}
\end{theorem}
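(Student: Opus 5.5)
The plan is to derive an error equation for $\bm{\delta}^n\in\bm{V}_{\rm ms}$ of the abstract form \eqref{eq:abstract-error-eq}, apply Lemma~\ref{lem:energy-identity}, and then convert the energy bound into norms using \eqref{eq:energy-positive} and \eqref{eq:rho-full-by-proj}.

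\textbf{Error equation.} First evaluate \eqref{eq:fine-dynamic} at $t_n$ and test with $\bm{w}\in\bm{V}_{\rm ms}$. Writing $(\bm{f}^n,\bm{w})=(\rho\bm{q}^n,\bm{w})$ and using $a(\bm{u}_h^n,\bm{w})=a(P_H\bm{u}_h^n,\bm{w})$ gives
\[
(\rho\,\partial_{tt}\bm{u}_h(t_n),\bm{w})+a(P_H\bm{u}_h^n,\bm{w})=(\rho\bm{q}^n,\bm{w}).
\]
Subtracting this from \eqref{eq:explicit-ms} yields
\[
b_\rho(D_{tt}\bm{\delta}^n,\bm{w})+a(\bm{\delta}^n,\bm{w})=(\bm{r}^n,\bm{w}).
\]
To identify $\bm{r}^n$, split $D_{tt}P_H\bm{u}_h^n=D_{tt}\bm{u}_h^n-D_{tt}\bm{\theta}^n$, and write $(\rho\pi\bm{q}^n,\pi\bm{w})=(\rho\pi\bm{q}^n,\bm{w})$. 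This gives
\[
\bm{r}^n=-\rho(I-\pi)\bm{q}^n+\rho\bigl(\partial_{tt}\bm{u}_h(t_n)-D_{tt}\bm{u}_h^n\bigr)+\rho\,\pi D_{tt}\bm{\theta}^n+\rho\,(I-\pi)D_{tt}\bm{u}_h^n+\bigl[\text{mass-mismatch term}\bigr].
\]
The bookkeeping of $b_\rho$ versus $(\rho\cdot,\cdot)$ has to be done carefully. The cleanest route is to note that $b_\rho(\bm{v},\bm{w})=(\rho\bm{v},\bm{w})-(\rho(I-\pi)\bm{v},(I-\pi)\bm{w})$. Hence the mismatch produces the term $-(\rho(I-\pi)D_{tt}P_H\bm{u}_h^n,(I-\pi)\bm{w})$, which can be absorbed into the $(I-\pi)$ part of $\bm{r}^n$.

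\textbf{Bounding the residual pieces.} Each residual piece is assigned to $R_1$ or $R_2$.
\begin{itemize}
\item The $\pi$-components go into $R_1^n$. The Taylor term is bounded by $C\tau^2\|\partial_{tttt}\bm{u}_h\|$. The term $\pi D_{tt}\bm{\theta}^n$ is bounded by $C\Lambda^{-1}H^2$ via \eqref{2-derivative-estimate}. Summing with the factor $\tau$ over $n\le N_T$ gives $O(\tau^2+\Lambda^{-1}H^2)$.
\item The $(I-\pi)$-components go into $R_2^n$, which carries the prefactor $H\Lambda^{-1/2}$. These components are $(I-\pi)\bm{q}^n$ and $(I-\pi)$ applied to second differences of $\bm{u}_h$ or $P_H\bm{u}_h$. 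Their $\rho$-norms and their discrete time differences need only be $O(1)$, which uses $\bm{q}\in C^1$ and $\partial_{tt}\bm{u}_h,\partial_{ttt}\bm{u}_h$ bounded. For $(I-\pi)D_{tt}P_H\bm{u}_h$, use the spectral estimate \eqref{eq:spectral-estimate} together with the $a$-stability of $P_H$. Then $\|(I-\pi)\cdot\|_\rho\le H\Lambda^{-1/2}\|\cdot\|_a$, which is bounded by the assumed energy-norm bounds on $\partial_{tt}\bm{u}_h$ and $\partial_{ttt}\bm{u}_h$ (the latter for the differences).
\end{itemize}
Together these give a contribution of $O(H\Lambda^{-1/2})$. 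This is where the advertised rate $\Lambda^{-1}H$ (up to the form of the constants) comes from; I would track the $\Lambda$ powers and absorb mismatches into $C$.

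\textbf{Initial energy and conclusion.} The remaining step is to bound $E^{\frac12}_{\pi,\rho}(\bm{\delta})$ at the same order, using \eqref{taylor_initial_data}. From \eqref{taylor_initial_data_a}, $\bm{u}_{\rm ms}^0$ is the $\rho$-projection of $\bm{u}_0$, so $\bm{\delta}^0$ is controlled by $\bm{\theta}^0$ and Lemma~\ref{lem:elliptic_projection}. For $D_t\bm{\delta}^{1/2}$, compare \eqref{taylor_initial_data_b} with a Taylor expansion of $\bm{u}_h(\tau)$. The residual then consists of $O(\tau^2)$ terms and a $(\bm{f}^0,\pi\bm{w})$ versus $(\bm{f}^0,\bm{w})$ mismatch of size $H\Lambda^{-1/2}$.

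I expect this initial-step estimate to be the main obstacle, because \eqref{taylor_initial_data_b} is tested in $\bm{V}_{\rm ms}$ with the full $\rho$-mass rather than with $b_\rho$. Converting it therefore requires the inverse Poincar\'e inequality (Lemma~\ref{lem:inverse-poincare}) and \eqref{eq:rho-full-by-proj} to control $\|\bar{\bm{\delta}}^{1/2}\|_a$ by $H^{-1}$ times $\rho$-quantities of size $O(H^2+\tau^3)$, and the CFL condition \eqref{eq:cfl} is needed there to get $\tau/H=O(1)$.

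Finally, apply \eqref{eq:abstract-stability}, then \eqref{eq:energy-positive} under \eqref{eq:cfl}, and then \eqref{eq:rho-full-by-proj} to pass from $\|\pi D_t\bm{\delta}\|_\rho$ to $\|D_t\bm{\delta}\|_\rho$. Taking square roots gives the claimed bound.
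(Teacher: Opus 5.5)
Your overall route is the paper's: the error equation for $\bm\delta^n$, Lemma~\ref{lem:energy-identity} with the $\pi$/$(I-\pi)$ split, and the initial energy via the inverse inequality and $\tau\lesssim H$. There is a genuine gap, however. As written, your $R_2^n$ step does not give the stated $\Lambda$-dependence, and you misidentify where the rate $\Lambda^{-1}H$ comes from.

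\textbf{The gap in $R_2^n$.} You take the $(I-\pi)$-pieces, in particular $(I-\pi)\bm q^n$ and its difference quotients, to be only $O(1)$. Then $H\Lambda^{-1/2}R_2^n=O(\Lambda^{-1/2}H)$, and you end with $C(\Lambda^{-1/2}H+\tau^2)$. This has the right order in $H$ but is weaker than the theorem whenever $\Lambda>1$. It matches the statement only if $C$ may grow like $\Lambda^{1/2}$, which defeats the explicit $\Lambda^{-1}$.

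\textbf{The fix, as in the paper.} Apply the spectral estimate \eqref{eq:spectral-estimate} to every unresolved piece.
\begin{itemize}
\item For the source term, this is exactly what the hypothesis $\bm q\in C^1([0,T];\bm H^1(\Omega))$ is for. It gives $\|(I-\pi)\bm q^k\|_\rho$ and $\|(I-\pi)(\bm q^{k+1}-\bm q^k)/\tau\|_\rho$ of order $\Lambda^{-1/2}H$.
\item For the displacement terms, the energy-norm bounds on $\partial_{tt}\bm u_h$ and $\partial_{ttt}\bm u_h$ give the same factor.
\end{itemize}
Hence $R_2^n\le C\Lambda^{-1/2}H$ and $\tau R_1^n+H\Lambda^{-1/2}R_2^n\le C(\Lambda^{-1}H^2+\tau^2)$. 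The residual contribution is therefore second order in $H$.

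\textbf{Where $\Lambda^{-1}H$ actually comes from.} The first-order term comes only from the initial energy, through
$\|\bm\delta^0\|_a\le CH^{-1}\|\pi\bm\delta^0\|_\rho\le CH^{-1}\|\bm\theta^0\|_\rho\le C\Lambda^{-1}H$, and similarly for $\bm\delta^1$ after the inverse inequality and $\tau\lesssim H$. The same remark applies to the $(\bm f^0,\pi\bm w)$ versus $(\bm f^0,\bm w)$ mismatch in your initial step. With only $\|(I-\pi)\bm q^0\|_\rho=O(1)$, it contributes $\tau\Lambda^{-1/2}\lesssim\Lambda^{-1/2}H$ to $\|D_t\bm\delta^{1/2}\|_\rho$ rather than $\tau\Lambda^{-1}H$.

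\textbf{A smaller bookkeeping point.} Because $\pi$ is $\rho$-self-adjoint, $b_\rho(\bm v,\bm w)=(\rho\pi\bm v,\bm w)$. The four terms you list before the bracket already sum to the exact residual, so adding a further mass-mismatch term would double count $(I-\pi)D_{tt}P_H\bm u_h^n$. It is cleaner to group as the paper does:
\[
\bm r^n=-\rho(I-\pi)\bm q^n+\rho\pi\bigl(\partial_{tt}\bm u_h(t_n)-D_{tt}\bm u_h^n\bigr)+\rho(I-\pi)\partial_{tt}\bm u_h(t_n)+\rho\pi D_{tt}\bm\theta^n .
\]
This puts the Taylor remainder entirely in $R_1^n$. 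The only unresolved displacement term is then $(I-\pi)\partial_{tt}\bm u_h$, whose difference quotients are controlled by $\|\partial_{ttt}\bm u_h\|_a$.
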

\begin{proof}
We derive an equation for $\bm{\delta}^n$. Since $P_H$ is the $a$-orthogonal projection,
\begin{equation*}
a(P_H \bm{u}_h(t_n), \bm{w}) = a(\bm{u}_h(t_n), \bm{w}), \quad \forall \bm{w} \in \bm{V}_{\rm ms}.
\end{equation*}
Subtracting this projected fine-scale equation (\ref{eq:fine-dynamic}) from the multiscale scheme (\ref{eq:explicit-ms}) gives
\begin{equation}\label{es_2}
b_\rho(D_{tt} \bm{\delta}^n, \bm{w}) + a(\bm{\delta}^n, \bm{w}) = (\bm{r}^n, \bm{w}), \quad \forall \bm{w} \in \bm{V}_{\rm ms}.
\end{equation}
We decompose the residual into four parts:
\(
\bm{r}^n = \bm{r}^n_1 + \bm{r}^n_2 + \bm{r}^n_3 + \bm{r}^n_4,
\)
where
\begin{equation*}
\bm{r}^n_1 = -\rho(I-\pi)(\rho^{-1}\bm{f}^n),
\quad
\bm{r}^n_2 = \rho\pi( \partial_{tt} \bm{u}_h(t_n) - D_{tt} \bm{u}_h(t_n) ),
\end{equation*}and
\begin{equation*}
\bm{r}^n_3 = \rho(I-\pi)\left( \partial_{tt}\bm{u}_{h}(t_{n}) \right),\quad \bm{r}^n_4 = \rho\pi(D_{tt} \bm{\theta}^n).
\end{equation*}
\noindent
By Lemma \ref{lem:energy-identity} and inequality (\ref{eq:rho-full-by-proj}), we have:
\begin{equation}\label{es_1}
\left\| D_t\bm{\delta}^{n+\frac{1}{2}} \right\|_{\rho}^2
+ \|\bar{\bm{\delta}}^{n+\frac12}\|_a^2 \leq C \left( E^{\frac12}_{\pi,\rho}(\bm{\delta}) + \big( \tau R_1^n + \Lambda^{-\frac12} H R_2^n \big)^2 \right),
\end{equation}
where 
\[
\begin{aligned}
R_1^n := &\; \sum_{k=1}^{n} \left\| \pi \left(D_{tt} \bm{\theta}^k \right) \right\|_{\rho} + \sum_{k=1}^{n} \left\| \pi \left( \partial_{tt} \bm{u}_h(t_k) - D_{tt} \bm{u}_h(t_k) \right) \right\|_{\rho},
\end{aligned}
\]
\[
\begin{aligned}
R_2^n := &\; \| (I-\pi)(\rho^{-1}\bm{f}^{1}) \|_{\rho}
+ \tau \sum_{k=1}^{n-1} \left\| (I-\pi)\left(\rho^{-1} \frac{\bm{f}^{k+1} - \bm{f}^{k}}{\tau} \right) \right\|_{\rho}
+ \| (I-\pi)(\rho^{-1}\bm{f}^{n}) \|_{\rho} \\
&+ \|(I-\pi)\left( \partial_{tt}\bm{u}_{h}(t_{1}) \right) \|_{\rho}
+ \tau \sum_{k=1}^{n-1} \left\| (I-\pi)\frac{\partial_{tt}\bm{u}_{h}(t_{k+1})  - \partial_{tt}\bm{u}_{h}(t_{k}) }{\tau} \right\|_{\rho} + \| (I-\pi)\left( \partial_{tt}\bm{u}_{h}(t_{n}) \right) \|_{\rho}.
\end{aligned}
\]

\noindent
We then estimate each part.

\textbf{Step 1. Initial energy estimate.}  
First, from (\ref{eq:discrete-energy}) we have
\[
E^{\frac{1}{2}}(\bm{\delta}) \le \frac{1}{2}\|\pi D_t\bm{\delta}^{\frac{1}{2}}\|_{\rho}^{2}
+ \frac{1}{2} \|\overline{\bm{\delta}}^{\frac{1}{2}}\|_{a}^{2}.
\]
By the triangle inequality,
\(
\|\overline{\bm{\delta}}^{\frac{1}{2}}\|_{a} \le \frac{1}{2}\bigl(\|\bm{\delta}^{0}\|_{a}+\|\bm{\delta}^{1}\|_{a}\bigr).
\)
Note that $\bm{u}_{\rm ms}^0$ is the $\rho$-weighted
$L^2$-projection of $\bm{u}_h^0$ onto $V_{\rm ms}$. Indeed, since
$\bm{V}_{\rm ms}\subset \bm{U}_h$, (\ref{discrete_initial_data_a}) and (\ref{taylor_initial_data_a}) imply
\((\rho(\bm{u}_{\rm ms}^0-\bm{u}_h^0),\bm{w})=0,
\) for all \(\bm{w}\in \bm{V}_{\rm ms}.\)
Since $\bm{\delta}^0=\bm{u}_{\rm ms}^0-P_H\bm{u}_h^0\in \bm{V}_{\rm ms}$,
combining the above orthogonality and Lemma~\ref{lem:elliptic_projection} (with $\bm{g} = \rho^{-1}\bm{f}(0) - \partial_{tt} \bm{u}_h(0)$) yields
\[
    \|\bm{\delta}^0\|_\rho
    \le
    \|\bm{u}_h^0-P_H\bm{u}_h^0\|_\rho
    \le C\Lambda^{-1}H^2 ,\qquad
\|\bm{u}_{h}^{0}-P_{H}\bm{u}_{h}^{0}\|_{a} \le C\Lambda^{-1/2}H.
\]
Using the inverse inequality (\ref{eq:inverse-poincare}), we further obtain
\[
    \|\bm{\delta}^0\|_a
    \le CH^{-1}\|\pi\bm{\delta}^0\|_\rho
    \le C\Lambda^{-1}H.
\]
From the definitions of $\bm{u}_{\mathrm{ms}}^{1}$ and $\bm{u}_{h}^{1}$ (\ref{discrete_initial_data_b}), (\ref{taylor_initial_data_b}) and the Taylor extensions, one obtains (using the similar analyses to \cite[Lemma 4.9]{Cheung2021})
\[
\|\bm{\delta}^{1}-\bm{\delta}^{0}\|_{\rho} \le \|\bm{\theta}^{1}-\bm{\theta}^{0}\|_{\rho}+\|\bm{\varepsilon}^{1}-\bm{\varepsilon}^{0}\|_{\rho} \le C\tau\bigl(\Lambda^{-1}H^{2}+\tau^{2}\bigr),
\]
where the term $\Lambda^{-1}H^{2}$ originates from the elliptic projection error (Lemma~\ref{lem:elliptic_projection} and (\ref{2-derivative-estimate})) and the term $\tau^{2}$ from the second-order time discretization. Then 
\begin{equation}\label{D_tdelta_estimate}
\|\pi D_t\bm{\delta}^{\frac{1}{2}}\|_{\rho}^2=\bigl\|\pi\bigl(\frac{\bm{\delta}^{1}-\bm{\delta}^{0}}{\tau}\bigr)\bigr\|_{\rho}^2\le \bigl\|\frac{\bm{\delta}^{1}-\bm{\delta}^{0}}{\tau}\bigr\|_{\rho}\le C(\Lambda^{-1}H^2+\tau^2)^2.
\end{equation}
Using again the inverse inequality (\ref{eq:inverse-poincare}),
\[
\|\bm{\delta}^{1}-\bm{\delta}^{0}\|_{a} \le C H^{-1}\|\bm{\delta}^{1}-\bm{\delta}^{0}\|_{\rho}
\le C\tau \bigl(\Lambda^{-1}H + \tau^{2}/H\bigr).
\]
Using $\tau\lesssim H$ from the CFL condition (\ref{eq:cfl}), this becomes $\|\bm{\delta}^{1}-\bm{\delta}^{0}\|_{a}\le C(\Lambda^{-1}H^2+\tau^2)$.  
Then we have
\(
\|\bm{\delta}^{1}\|_{a} \le \|\bm{\delta}^{0}\|_{a}+\|\bm{\delta}^{1}-\bm{\delta}^{0}\|_{a}
\le C\bigl(\Lambda^{-1}H+\tau^2\bigr).
\)
Thus,
\[
E^{\frac{1}{2}}(\bm{\delta}) \le C\bigl(\Lambda^{-1}H+\tau^{2}\bigr)^2.
\]

\textbf{Step 2. Estimate of $R_1^n$ and $R_2^n$.}  
Using (\ref{2-derivative-estimate}) and Taylor extensions:
\[
\sum_{k=1}^{n} \left\| \pi \left(D_{tt} \bm{\theta}^k \right) \right\|_{\rho}
\leq C \tau^{-1} \Lambda^{-1} H^2,
\quad
\sum_{k=1}^{n} \left\| \pi \left( \partial_{tt} \bm{u}_h(t_k) - D_{tt} \bm{u}_h(t_k) \right) \right\|_{\rho}
\leq C \tau \left\| \frac{\partial^4 \bm{u}_h}{\partial t^4} \right\|_{C([0,T];L^2)}.
\]
Thus we have
\(
R_1^n \leq C \tau^{-1} \Lambda^{-1} H^2 + C \tau.
\)
By (\ref{eq:spectral-estimate}) and the regularity of $u_h$ with respect to $t$, we have
\[
\|(I-\pi)\left( \partial_{tt}\bm{u}_{h}(t_k) \right) \|_{\rho}
\leq C \Lambda^{-1/2} H \left\| \partial_{tt}\bm{u}_{h}(t_k) \right\|_a
\leq C \Lambda^{-1/2} H.
\]
For the time difference term,
by the mean value theorem, there exists $\xi_k \in (t_k, t_{k+1})$ such that
\[
\frac{ \partial^2_{tt} \bm{u}_h(t_{k+1}) - \partial^2_{tt} \bm{u}_h(t_k) }{\tau}
= \frac{\partial^3 \bm{u}_h}{\partial t^3}(\xi_k),
\]
Hence,
\[
\left\| (I-\pi)\frac{\partial_{tt}\bm{u}_{h}(t_{k+1})  - \partial_{tt}\bm{u}_{h}(t_{k})}{\tau} \right\|_{\rho}
\leq C \Lambda^{-1/2} H.
\]
Summing over $k=1$ to $n-1$ and multiplying by $\tau$, we obtain
\[
\tau \sum_{k=1}^{n-1} \left\| (I-\pi)\frac{\partial_{tt}\bm{u}_{h}(t_{k+1})  - \partial_{tt}\bm{u}_{h}(t_{k}) }{\tau} \right\|_{\rho}
\leq T \cdot C \Lambda^{-1/2} H = O(\Lambda^{-1/2} H).
\]
By (\ref{eq:spectral-estimate}) and the assumed spatial regularity of $\bm{q}$ (recall that $\bm{q}=\rho^{-1}\bm{f}$), we know
\[
\| (I-\pi)(\bm{q}^k) \|_{\rho} \leq \Lambda^{-1/2} H \| \bm{q} \|_{C([0,T]; H^1)},\quad
\tau \sum_{k=1}^{n-1} \left\| (I-\pi)\left( \frac{\bm{q}^{k+1} - \bm{q}^{k}}{\tau} \right) \right\|_{\rho}
\leq C \Lambda^{-1/2} H \left\| \partial_t \bm{q} \right\|_{C([0,T]; H^1)}.
\]
Thus, we have $R_2^n\leq C\Lambda^{-1/2} H$. Substitute the estimates into the key quantity:
\[
\begin{aligned}
\tau R_1^n + \Lambda^{-1/2} H R_2^n
&\leq \tau \left( C \tau^{-1} \Lambda^{-1} H^2 + C \tau \right)
+ \Lambda^{-1/2} H \cdot C\Lambda^{-1/2} H \\
&\leq C \Lambda^{-1} H^2 + C \tau^2 + C \Lambda^{-1} H^2\leq C(\Lambda^{-1} H^2 + \tau^2).
\end{aligned}
\]

\textbf{Step 3. Final combination.}  
Applying the estimate of $E^{\frac{1}{2}}_{\pi,\rho}(\bm{\delta})$ and the bound on $\tau R_1^n + \Lambda^{-1/2} H R_2^n$ into (\ref{es_1}), we obtain:
\[
\quad \|D_t \bm{\delta}^{n+\frac12}\|_\rho^2 + \|\bar{\bm{\delta}}^{n+\frac12}\|_a^2
\leq C \left( \Lambda^{-1} H^2 + \tau^2 \right)^2+C\bigl(\Lambda^{-1}H+\tau^{2}\bigr)^2\leq C\bigl(\Lambda^{-1}H+\tau^{2}\bigr)^2.
\]
This completes the proof.
\end{proof}
Next we provide a $\rho$-norm error estimate for the displacement.
\begin{theorem}\label{estimate_for_u}
Under the assumptions of Theorem~\ref{thm:energy_error}, there exists a constant
$C>0$, independent of $h$, $H$, and $\tau$, such that
\[
\max_{0\le n\le N_T-1}
\left\|
\frac{
\bm{u}_h(t_{n+1})-\bm{u}^{n+1}_{\mathrm{ms}}
+
\bm{u}_h(t_n)-\bm{u}^n_{\mathrm{ms}}
}{2}
\right\|_{\rho}
\le
C\bigl(\Lambda^{-1}H^2+\tau^2\bigr).
\]
Equivalently,
\(
\max_{0\le n\le N_T-1}
\|\bar{\bm{\varepsilon}}^{\,n+\frac12}\|_\rho
\le
C\bigl(\Lambda^{-1}H^2+\tau^2\bigr),
\)
where
\(
\bm{\varepsilon}^n=\bm{u}_h(t_n)-\bm{u}_{\mathrm{ms}}^n,\)
\(\bar{\bm{\varepsilon}}^{\,n+\frac12}
=
\frac{\bm{\varepsilon}^{n+1}+\bm{\varepsilon}^n}{2}.
\)
\end{theorem}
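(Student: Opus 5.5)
We split $\bar{\bm{\varepsilon}}^{\,n+\frac12}=\bar{\bm{\theta}}^{\,n+\frac12}-\bar{\bm{\delta}}^{\,n+\frac12}$ and bound the two pieces separately in the $\rho$-norm. The projection part is immediate. Lemma~\ref{lem:time_projection} gives $\|\bm{\theta}^k\|_\rho\le C\Lambda^{-1}H^2\|\bm{g}(t_k)\|_\rho$ for $k=n,n+1$, and $\bm{g}$ is uniformly bounded by the regularity assumptions. Hence $\|\bar{\bm{\theta}}^{\,n+\frac12}\|_\rho\le C\Lambda^{-1}H^2$.

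The difficulty lies in $\bar{\bm{\delta}}^{\,n+\frac12}$. Theorem~\ref{thm:energy_error} only controls it in the $a$-norm at order $\Lambda^{-1}H+\tau^2$. Combining this with the spectral estimate \eqref{eq:spectral-estimate} and \eqref{eq:rho-full-by-proj} would lose a factor of $H$. So we would use a standard time-integrated (Baker-type) duality argument adapted to the projected mass form $b_\rho$. Define $\bm{\Delta}^0=0$ and $\bm{\Delta}^{n}:=\tau\sum_{k=0}^{n-1}\bar{\bm{\delta}}^{\,k+\frac12}\in\bm{V}_{\rm ms}$, a discrete time antiderivative of $\bm{\delta}$. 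Summing the error equation \eqref{es_2} in time and averaging appropriately, $\bm{\Delta}$ satisfies an equation of the same form as \eqref{eq:abstract-error-eq}. Its residual is the summed residual $\tau\sum_k \bm{r}^k$, plus an initial term $b_\rho(D_t\bm{\delta}^{\frac12},\bm{w})$. We then apply the energy identity of Lemma~\ref{lem:energy-identity} to $\bm{\Delta}$. The key point is that the discrete kinetic part of $E_{\pi,\rho}(\bm{\Delta})$ is $\|\pi\bar{\bm{\delta}}^{\,n+\frac12}\|_\rho^2$. Therefore the stability bound \eqref{eq:abstract-stability}, together with \eqref{eq:rho-full-by-proj}, yields $\|\bar{\bm{\delta}}^{\,n+\frac12}\|_\rho$ in terms of the integrated residuals.

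Next we estimate the integrated residuals term by term using the splitting $\bm{r}=\bm{r}_1+\dots+\bm{r}_4$ from the proof of Theorem~\ref{thm:energy_error}.
\begin{itemize}
\item The $\bm{r}_2$ term (time truncation) gives $O(\tau^2)$ after summation.
\item For the $\bm{r}_4$ term, the sum $\tau\sum_k\pi D_{tt}\bm{\theta}^k$ telescopes to $\pi(D_t\bm{\theta}^{n+\frac12}-D_t\bm{\theta}^{\frac12})$. This is $O(\Lambda^{-1}H^2)$ by Lemma~\ref{lem:time_projection} applied to $D_t\bm{\theta}$, and it avoids the $\tau^{-1}$ loss seen in Step 2 of Theorem~\ref{thm:energy_error}.
\item The $(I-\pi)$ terms $\bm{r}_1,\bm{r}_3$ are handled through the $H\Lambda^{-1/2}R_2$ mechanism. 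Each factor $\|(I-\pi)(\cdot)\|_\rho$ contributes another $H\Lambda^{-1/2}$ via \eqref{eq:spectral-estimate}, using $\bm{q}\in C^1(H^1)$ and $\partial_{tt}\bm{u}_h,\partial_{ttt}\bm{u}_h$ bounded in the $a$-norm. This gives $O(\Lambda^{-1}H^2)$.
\end{itemize}

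For the initial energy of $\bm{\Delta}$, we use $\bm{\Delta}^0=0$, so it reduces to $\|\pi\bar{\bm{\delta}}^{\frac12}\|_\rho^2$ plus $\tau^2\|\bar{\bm{\delta}}^{\frac12}\|_a^2$-type terms. These are bounded by the Step 1 estimates: $\|\bm{\delta}^0\|_\rho\le C\Lambda^{-1}H^2$ and $\|\bm{\delta}^1-\bm{\delta}^0\|_\rho\le C\tau(\Lambda^{-1}H^2+\tau^2)$. The extra $\tau$ factors on the $a$-norm pieces, together with the CFL condition $\tau\lesssim H$, reduce $\tau\Lambda^{-1}H$ to $\Lambda^{-1}H^2$.

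The main obstacle is the first step. We must derive the summed equation for $\bm{\Delta}$ with the correct averaged stiffness term, so that $a(\bm{\Delta}^{n},\cdot)$ appears at the right time level. We must also check that the leftover $b_\rho(D_t\bm{\delta}^{\frac12},\cdot)$ term and the averaging of the $\bm{\theta}$-parts produce only $O(\Lambda^{-1}H^2+\tau^2)$ contributions. If the exact centered summation is awkward, we would sum \eqref{es_2} from $k=1$ to $n$ to get an equation in $D_t\bm{\delta}^{n+\frac12}$, and then average consecutive levels. Finally, combining the $\bm{\theta}$ and $\bm{\delta}$ bounds and taking the maximum over $n$ gives the claim.
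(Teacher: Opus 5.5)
Your proposal is correct and is essentially the paper's discrete Baker argument: the energy $E_{\pi,\rho}^{n+\frac12}(\bm{\Delta})$ controlled by Lemma~\ref{lem:energy-identity} equals $\frac12\bigl(\|\pi D_t\bm{\Delta}^{n+\frac12}\|_\rho^2+a(\bm{\Delta}^n,\bm{\Delta}^{n+1})\bigr)$, which is exactly the quantity $\|\pi\bm{\delta}^{n+1}\|_\rho^2+a(\Delta^n,\Delta^{n+1})$ that the paper telescopes by hand, so invoking the lemma simply repackages that telescoping together with its projected/unresolved residual splitting (the paper uses the right-endpoint antiderivative $\Delta^n=\tau\sum_{k=1}^n\bm{\delta}^k$ rather than your midpoint one, and therefore bounds each $\|\pi\bm{\delta}^n\|_\rho$ instead of $\|\pi\bar{\bm{\delta}}^{\,n+\frac12}\|_\rho$). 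One bookkeeping fix: with $\bm{\Delta}^0=0$ your summed equation carries, besides $b_\rho(D_t\bm{\delta}^{\frac12},\bm{w})$, an extra forcing $\frac{\tau}{2}a(\bm{\delta}^0,\bm{w})$, which disappears if you shift $\bm{\Delta}$ by $-\frac{\tau}{2}\bm{\delta}^0$; the resulting initial-energy term $\frac{\tau^2}{32}\|\bm{\delta}^1-\bm{\delta}^0\|_a^2$ is then handled by the inverse inequality and the CFL condition, as you anticipated.
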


\begin{proof}

Recall the error decomposition
\(
\bm{\varepsilon}^n
=
\bm{\theta}^n-\bm{\delta}^n,
\)
where
\(
\bm{\theta}^n=(I-P_H)\bm{u}_h(t_n),\)
\(
\bm{\delta}^n=u_{\mathrm{ms}}^n-P_H\bm{u}_h(t_n).
\)
Hence
\(
\bar{\bm{\varepsilon}}^{\,n+\frac12}
=
\bar{\bm{\theta}}^{\,n+\frac12}
-
\bar{\bm{\delta}}^{\,n+\frac12}.
\)
By Lemma~\ref{lem:time_projection},
\(
\|\bar{\bm{\theta}}^{\,n+\frac12}\|_\rho
\le
C\Lambda^{-1}H^2.
\)
Therefore it remains to estimate
$\bar{\bm{\delta}}^{\,n+\frac12}$.

\paragraph{Step 1. Estimate of the projected error.}

We use the standard discrete Baker argument; see also
\cite[Theorem~4.11]{Cheung2021}.  The error equation derived in
Theorem~\ref{thm:energy_error} reads
\[
b_\rho(D_{tt}\bm{\delta}^n,\bm{w})
+a(\bm{\delta}^n,\bm{w})
=(\bm r^n,\bm w),
\qquad
\forall \bm w\in\bm V_{\rm ms}.
\]
Define
\(
\Delta^n:=\tau\sum_{k=1}^n\bm\delta^k,\)
\(\Delta^0:=0.
\)
For each \(j\ge1\), summing the error equation from \(k=1\) to \(j\)
and then taking
\(
\bm w=\Delta^{j+1}-\Delta^{j-1}
      =\tau(\bm\delta^{j+1}+\bm\delta^j)
\)
gives
\begin{align}
\|\pi\bm\delta^{j+1}\|_\rho^2
 -\|\pi\bm\delta^j\|_\rho^2
 +a(\Delta^j,\Delta^{j+1})
 -a(\Delta^{j-1},\Delta^j)
 =
 b_\rho(\bm\delta^1-\bm\delta^0,
          \bm\delta^{j+1}+\bm\delta^j)
 +\tau^2\sum_{k=1}^j
   (\bm r^k,\bm\delta^{j+1}+\bm\delta^j).
\label{eq:baker-one-step}
\end{align}
Here we used
\(
b_\rho\!\left(D_t\bm\delta^{j+\frac12},
\Delta^{j+1}-\Delta^{j-1}\right)
=
\|\pi\bm\delta^{j+1}\|_\rho^2
-\|\pi\bm\delta^j\|_\rho^2,
\)
and symmetry of \(a(\cdot,\cdot)\) for the stiffness difference.
Summing \eqref{eq:baker-one-step} once more from \(j=1\) to \(n\)
now telescopes both differences and yields the exact accumulated identity
\begin{align}
\|\pi\bm\delta^{n+1}\|_\rho^2
+a(\Delta^n,\Delta^{n+1})
=
\|\pi\bm\delta^1\|_\rho^2
+\sum_{j=1}^n
 b_\rho(\bm\delta^1-\bm\delta^0,
          \bm\delta^{j+1}+\bm\delta^j)+
\tau^2\sum_{j=1}^n\sum_{k=1}^j
(\bm r^k,\bm\delta^{j+1}+\bm\delta^j).
\label{eq:baker-telescoped}
\end{align}
In particular, no telescoping step is hidden in passing from
\eqref{eq:baker-one-step} to \eqref{eq:baker-telescoped}.
By the inverse inequality and the CFL condition (\ref{eq:cfl}),
\begin{align*}
a(\Delta^n,\Delta^{n+1})
=
\left\|\frac{\Delta^{n+1}+\Delta^n}{2}\right\|_a^2
-\frac{\tau^2}{4}\|\bm\delta^{n+1}\|_a^2
\ge
-\frac{\tau^2}{4\beta H^2}
 \|\pi\bm\delta^{n+1}\|_\rho^2.
\end{align*}
Thus
\(
c_{\rm CFL}:=1-\tau^2/(4\beta H^2)>0.
\)
Set
\(
M_n:=\max_{0\le \ell\le n+1}
\|\pi\bm\delta^\ell\|_\rho.
\)
The initial-data term in \eqref{eq:baker-telescoped} satisfies
\[
\left|
\sum_{j=1}^n
b_\rho(\bm\delta^1-\bm\delta^0,
\bm\delta^{j+1}+\bm\delta^j)
\right|
\le
2n\|\pi(\bm\delta^1-\bm\delta^0)\|_\rho M_n
\le
2T\|\pi D_t\bm\delta^{1/2}\|_\rho M_n .
\]

\paragraph{Step 2. Residual estimate.}

Denote the double residual sum in \eqref{eq:baker-telescoped} by
\(
\mathcal Q_n:=
\tau^2\sum_{j=1}^n\sum_{k=1}^j
(\bm r^k,\bm\delta^{j+1}+\bm\delta^j).
\) Accordingly, we write
$Q_n=Q_n^{\rm proj}+Q_n^{\rm unres}$,
where
\(
Q_n^{\rm proj}
:=
\tau^2\sum_{j=1}^n\sum_{k=1}^j
\bigl(\rho\pi(\rho^{-1}\bm{r}^k),
\pi(\bm{\delta}^{j+1}+\bm{\delta}^j)\bigr),
\)
and $Q_n^{\rm unres}:=Q_n-Q_n^{\rm proj}$.
We retain the factor \(\tau^2\) coming from the two discrete summations.
Using the residual decomposition from Theorem~\ref{thm:energy_error},
\[
\bm r^k
=
-\rho(I-\pi)(\rho^{-1}\bm f^k)
+\rho\pi\bigl(\partial_{tt}\bm u_h(t_k)
                 -D_{tt}\bm u_h(t_k)\bigr)
+\rho(I-\pi)\partial_{tt}\bm u_h(t_k)
+\rho\pi(D_{tt}\bm\theta^k),
\]
we split each pairing into projected and unresolved parts:
\begin{align*}
(\bm r^k,\bm v)
=
\bigl(\rho\pi(\rho^{-1}\bm r^k),\pi\bm v\bigr)
+
\bigl(\rho(I-\pi)(\rho^{-1}\bm r^k),
       (I-\pi)\bm v\bigr).
\end{align*}
For the projected part, Taylor expansion and
Lemma~\ref{lem:time_projection} give, uniformly in \(j\),
\[
\sum_{k=1}^j
\|\pi(\rho^{-1}\bm r^k)\|_\rho
\le
C\tau^{-1}(\Lambda^{-1}H^2+\tau^2).
\]
Consequently, we have
\begin{align*}
|\mathcal Q_n^{\rm proj}|
\le
2\tau^2\sum_{j=1}^n
\left(\sum_{k=1}^j
\|\pi(\rho^{-1}\bm r^k)\|_\rho\right)M_n
\le
C(\Lambda^{-1}H^2+\tau^2)M_n
\le
\eta M_n^2
+C_\eta(\Lambda^{-1}H^2+\tau^2)^2 .
\end{align*}
For the unresolved part, discrete summation by parts in the time index,
the spectral estimate \eqref{eq:spectral-estimate}, and
Theorem~\ref{thm:energy_error} give
\[
|\mathcal Q_n^{\rm unres}|
\le
C(\Lambda^{-1}H^2+\tau^2)^2,
\qquad \tau\lesssim H.
\]
This is the same unresolved-residual estimate used in the discrete Baker
argument of \cite[Theorem~4.11]{Cheung2021}; importantly, the prefactor
\(\tau^2\) in \(\mathcal Q_n\) is kept before estimating the two
time sums.  Hence
\[
|\mathcal Q_n|
\le
\eta M_n^2
+C_\eta(\Lambda^{-1}H^2+\tau^2)^2.
\]
The initial estimates already established in
Theorem~\ref{thm:energy_error} imply
\(
\|\pi\bm\delta^1\|_\rho
+\|\pi D_t\bm\delta^{1/2}\|_\rho
\le
C(\Lambda^{-1}H^2+\tau^2).
\)
Substituting these estimates into \eqref{eq:baker-telescoped}, taking the
maximum over \(n\), and choosing \(\eta<c_{\rm CFL}/2\), Young's
inequality gives
\[
\max_{0\le n\le N_T}
\|\pi\bm\delta^n\|_\rho
\le
C(\Lambda^{-1}H^2+\tau^2).
\]

\paragraph{Step 3. Estimate of $\bm{\delta}^n$.}

By 
\eqref{eq:rho-full-by-proj},
for every $\bm{v}\in \bm{V}_{\rm ms}$,
\(
\|\bm{v}\|_\rho
\le
\left(
1+\frac1{\sqrt{\Lambda\beta}}
\right)
\|\pi \bm{v}\|_\rho.
\)
Hence
\[
\max_{0\le n\le N_T}
\|\bm{\delta}^n\|_\rho
\le
C
\bigl(
\Lambda^{-1}H^2
+
\tau^2
\bigr).
\]
Consequently,
\[
\|\bar{\bm{\delta}}^{\,n+\frac12}\|_\rho
\le
C
\bigl(
\Lambda^{-1}H^2
+
\tau^2
\bigr).
\]
Finally,
\(
\|\bar{\bm{\varepsilon}}^{\,n+\frac12}\|_\rho
\le
\|\bar{\bm{\theta}}^{\,n+\frac12}\|_\rho
+
\|\bar{\bm{\delta}}^{\,n+\frac12}\|_\rho,
\)
which yields
\(
\max_{0\le n\le N_T-1}
\|\bar{\bm{\varepsilon}}^{\,n+\frac12}\|_\rho
\le
C
\bigl(
\Lambda^{-1}H^2
+
\tau^2
\bigr).
\)
This completes the proof.

\end{proof}

\begin{remark}[Density-weighted $L^2$ form]
Since
\(
\|\bm{v}\|_\rho = \|\rho^{1/2} \bm{v}\|_{L^2(\Omega)},
\)
the estimate in Theorem~\ref{estimate_for_u} is equivalently written as
\begin{equation*}
\max_{0 \le n \le N_T - 1}
\bigg\| \rho^{1/2} \frac{\bm{u}_h(t_{n+1}) - \bm{u}^{n+1}_{\mathrm{ms}} + \bm{u}_h(t_n) - \bm{u}^n_{\mathrm{ms}}}{2} \bigg\|_{L^2(\Omega)}
\le C (\Lambda^{-1} H^2 + \tau^2).
\end{equation*}
\end{remark}
Based on the estimate for the displacement, we present local stress recovery and stress error estimate as follows.
\begin{theorem} \label{estimate_for_sigma}
Assume the conditions of Theorem~\ref{thm:energy_error} and Theorem~\ref{estimate_for_u} hold.
For any $\bm{v}\in \bm{U}_h$, define the locally recovered stress and rotation
$(\mathcal S \bm{v},\mathcal G \bm{v})\in \underline{\Sigma}_h\times \Gamma_h$ by
\[
(\mathcal{A}(\mathcal S \bm{v}),\underline{w}_h)
-
\sum_{e\in\mathcal F_{dl}}
(\bm{v},\llbracket \underline{w}_h n\rrbracket)_e
+
(\operatorname{as}(\mathcal{A}\underline{w}_h),\mathcal G \bm{v})
=0,
\qquad
\forall \underline{w}_h\in \underline{\Sigma}_h,
\]
and
\[
(\operatorname{as}(\mathcal{A}(\mathcal S \bm{v})),\xi_h)=0,
\qquad
\forall \xi_h\in \Gamma_h.
\]
Let
\(
\underline{\sigma}_h^n:=\mathcal S \bm{u}_h^n,\)
\(
\underline{\sigma}_{\mathrm{ms}}^n:=\mathcal S \bm{u}_{\mathrm{ms}}^n,
\)
\(
\underline{\sigma}_H^n:=\mathcal S (P_H\bm{u}_h^n).
\)
Define the midpoint averages
\[
\bar\sigma_h^{\,n+\frac12}
=
\frac{\underline{\sigma}_h^{n+1}+\underline{\sigma}_h^n}{2},
\qquad
\bar\sigma_{\mathrm{ms}}^{\,n+\frac12}
=
\frac{\underline{\sigma}_{\mathrm{ms}}^{n+1}+\underline{\sigma}_{\mathrm{ms}}^n}{2},
\qquad
\bar\sigma_H^{\,n+\frac12}
=
\frac{\underline{\sigma}_H^{n+1}+\underline{\sigma}_H^n}{2}.
\]
Then
\[
\max_{0\le n\le N_T-1}
\left\|
\bar\sigma_H^{\,n+\frac12}
-
\bar\sigma_{\mathrm{ms}}^{\,n+\frac12}
\right\|_{\mathcal{A}}
\le
C\bigl(\Lambda^{-1}H+\tau^2\bigr).
\]
Moreover,
\[
\max_{0\le n\le N_T-1}
\left\|
\bar\sigma_h^{\,n+\frac12}
-
\bar\sigma_{\mathrm{ms}}^{\,n+\frac12}
\right\|_{\mathcal{A}}
\le
C
\bigl(
\Lambda^{-\frac12}H
+
\tau^2
\bigr).
\]
\end{theorem}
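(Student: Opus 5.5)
The argument rests on one identity: the recovery map $\mathcal S$ is linear and $\mathcal A$-isometric from the energy norm, i.e.
\[
\|\mathcal S\bm v\|_{\mathcal A}^2=(\mathcal A\,\mathcal S\bm v,\mathcal S\bm v)=a(\bm v,\bm v)=\|\bm v\|_a^2,\qquad \forall\bm v\in\bm U_h .
\]
I will establish this first from the construction in Section~\ref{reduced cell-centered system}. The two defining equations of $(\mathcal S\bm v,\mathcal G\bm v)$ are exactly \eqref{eq:discrete1} and \eqref{eq:discrete3} with $\bm u_h$ replaced by $\bm v$. Hence, as in \eqref{eq:sigma_expression}, we get $\mathcal S\bm v=\mathcal A^{-1}\bm{\mathcal P}_\star(\bm v)$ with $\mathcal G\bm v=\gamma_h(\bm v)$. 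Uniqueness of $\gamma_h(\bm v)$ follows from \eqref{eq:orthogonality}.

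Then
\[
(\mathcal A\,\mathcal S\bm v,\mathcal S\bm v)=(\bm{\mathcal P}_\star(\bm v),\mathcal A^{-1}\bm{\mathcal P}_\star(\bm v))=a(\bm v,\bm v)
\]
by \eqref{eq:ah-wave}, using symmetry of $\mathcal A^{-1}$ as well. Linearity of $\mathcal S$ follows from linearity of $\bm{\mathcal J}$ and of $\bm v\mapsto\gamma_h(\bm v)$. Here $\|\underline\tau\|_{\mathcal A}^2:=(\mathcal A\underline\tau,\underline\tau)$ is understood.

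For the first estimate, linearity gives
\[
\bar\sigma_H^{\,n+\frac12}-\bar\sigma_{\rm ms}^{\,n+\frac12}=\mathcal S\Bigl(\tfrac{P_H\bm u_h^{n+1}+P_H\bm u_h^n}{2}-\tfrac{\bm u_{\rm ms}^{n+1}+\bm u_{\rm ms}^n}{2}\Bigr)=-\mathcal S\bar{\bm\delta}^{\,n+\frac12}.
\]
By the isometry its $\mathcal A$-norm equals $\|\bar{\bm\delta}^{\,n+\frac12}\|_a$. Theorem~\ref{thm:energy_error} bounds this by $C(\Lambda^{-1}H+\tau^2)$ uniformly in $n$, and taking the maximum over $n$ gives the first claim.

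For the second estimate, I split
\[
\bar\sigma_h^{\,n+\frac12}-\bar\sigma_{\rm ms}^{\,n+\frac12}=(\bar\sigma_h^{\,n+\frac12}-\bar\sigma_H^{\,n+\frac12})+(\bar\sigma_H^{\,n+\frac12}-\bar\sigma_{\rm ms}^{\,n+\frac12}).
\]
The first bracket is $\mathcal S\bar{\bm\theta}^{\,n+\frac12}$, whose $\mathcal A$-norm equals $\|\bar{\bm\theta}^{\,n+\frac12}\|_a\le\frac12(\|\bm\theta^{n+1}\|_a+\|\bm\theta^n\|_a)$. Since $\bm\theta^k=(I-P_H)G_h\bm g(t_k)$ (Lemma~\ref{lem:time_projection}), the $a$-norm estimate of Lemma~\ref{lem:elliptic_projection} gives
\[
\|\bm\theta^k\|_a\le C\Lambda^{-1/2}H\|\bm g(t_k)\|_\rho\le C\Lambda^{-1/2}H.
\]
The last step uses the assumed bound $\bm g\in C^4([0,T];\bm L^2)$, so $\|\bm g\|_{L^\infty(0,T;\rho)}$ is controlled, with $\rho$ bounded above. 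The second bracket is bounded by the first claim. Finally, $\Lambda^{-1}H\le C\Lambda^{-1/2}H$ because $\Lambda$ is bounded below: we may take $l_i\ge1$ so that $\Lambda\gtrsim1$, or else absorb $\Lambda^{-1/2}$ into $C$.

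The only step needing genuine care is the isometry identity. I must check that the global recovery in the theorem (posed over all of $\underline\Sigma_h$) coincides with $\mathcal A^{-1}\bm{\mathcal P}_\star$, and that the sign convention of the $\bm{\mathcal J}$ term matches \eqref{eq:jump_operator}. Once this is in place, the remaining steps are direct applications of Theorem~\ref{thm:energy_error} and Lemma~\ref{lem:elliptic_projection}.
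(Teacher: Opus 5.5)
Your proposal is correct and follows essentially the same route as the paper. Both use the isometry $\|\mathcal S\bm v\|_{\mathcal A}=\|\bm v\|_a$, which you justify more explicitly via $\mathcal S\bm v=\mathcal A^{-1}\bm{\mathcal P}_\star(\bm v)$ and \eqref{eq:ah-wave}, then linearity of $\mathcal S$ to reduce to $\bar{\bm\delta}^{\,n+\frac12}$ and $\bar{\bm\theta}^{\,n+\frac12}$, Theorem~\ref{thm:energy_error} for the former, Lemma~\ref{lem:elliptic_projection} for the latter, and a lower bound on $\Lambda$. One small quibble concerns your parenthetical justification of that lower bound: choosing $l_i\ge1$ does not by itself give $\Lambda\gtrsim1$, since the lowest local eigenvalues can vanish or be small (local rigid-type modes, high-contrast channels). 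It should instead be stated as an assumption, which is exactly what the paper does when it invokes $\Lambda\ge1$.
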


\begin{proof}
The recovery problem is local on each interaction region and is
well posed by the local solvability of the multipoint stress
control volume method. Since the recovery equations are linear in
$\bm{v}$, the operator $\mathcal S:\bm{U}_h\to\underline{\Sigma}_h$ is linear.
By the construction of the reduced bilinear form, for every
$\bm{v}\in \bm{U}_h$,
\[
a(\bm{v},\bm{v})
=
(\mathcal{A}(\mathcal{S}\bm{v}),\mathcal S \bm{v})
=
\|\mathcal S \bm{v}\|_{\mathcal{A}}^2.
\]
Equivalently,
\(
\|\mathcal S \bm{v}\|_\mathcal{A}=\|\bm{v}\|_a.
\)

We first estimate the stress recovered from the projected fine-scale
solution. By linearity of $\mathcal S$,
\[
\bar\sigma_H^{\,n+\frac12}
-
\bar\sigma_{\mathrm{ms}}^{\,n+\frac12}
=
\mathcal S
\left(
\frac{
P_H\bm{u}_h^{n+1}+P_H\bm{u}_h^n
}{2}
-
\frac{
\bm{u}_{\mathrm{ms}}^{n+1}+\bm{u}_{\mathrm{ms}}^n
}{2}
\right).
\]
Using the definition
\(
\bm{\delta}^n=\bm{u}_{\mathrm{ms}}^n-P_H\bm{u}_h^n,
\)
we get
\(
\bar\sigma_H^{\,n+\frac12}
-
\bar\sigma_{\mathrm{ms}}^{\,n+\frac12}
=
-\mathcal S\bar{\bm{\delta}}^{\,n+\frac12}.
\)
Therefore,
\(
\left\|
\bar\sigma_H^{\,n+\frac12}
-
\bar\sigma_{\mathrm{ms}}^{\,n+\frac12}
\right\|_\mathcal{A}
=
\|\bar{\bm{\delta}}^{\,n+\frac12}\|_a
\). Theorem~\ref{thm:energy_error} gives
\[
\|\bar{\bm{\delta}}^{\,n+\frac12}\|_a
\le
C\bigl(\Lambda^{-1}H+\tau^2\bigr),
\]
and hence
\[
\max_{0\le n\le N_T-1}
\left\|
\bar\sigma_H^{\,n+\frac12}
-
\bar\sigma_{\mathrm{ms}}^{\,n+\frac12}
\right\|_\mathcal{A}
\le
C\bigl(\Lambda^{-1}H+\tau^2\bigr).
\]

Next we estimate the stress recovered from the fine-grid
displacement. Recall that
\(
\bm{\varepsilon}^n
=
\bm{u}_h^n-\bm{u}_{\mathrm{ms}}^n
=
\bm{\theta}^n-\bm{\delta}^n,
\)
where
\(
\bm{\theta}^n=(I-P_H)\bm{u}_h^n.
\)
Again by linearity,
\[
\bar\sigma_h^{\,n+\frac12}
-
\bar\sigma_{\mathrm{ms}}^{\,n+\frac12}
=
\mathcal S\bar{\bm{\varepsilon}}^{\,n+\frac12}
=
\mathcal S
\left(
\bar{\bm{\theta}}^{\,n+\frac12}
-
\bar{\bm{\delta}}^{\,n+\frac12}
\right).
\]
Thus,
\[
\left\|
\bar\sigma_h^{\,n+\frac12}
-
\bar\sigma_{\mathrm{ms}}^{\,n+\frac12}
\right\|_{\mathcal{A}}
\le
\|\bar{\bm{\theta}}^{\,n+\frac12}\|_a
+
\|\bar{\bm{\delta}}^{\,n+\frac12}\|_a.
\]
By the elliptic projection estimate in Lemma~\ref{lem:elliptic_projection}, with
\(
\bm{g}(t)=\rho^{-1}\bm{f}(t)-\partial_{tt}\bm{u}_h(t),
\)
we have
\[
\|\bm{\theta}^n\|_a
=
\|(I-P_H)\bm{u}_h(t_n)\|_a
=
\|(I-P_H)G_h\bm{g}(t_n)\|_a
\le
C\Lambda^{-\frac12}H\|\bm{g}(t_n)\|_\rho.
\]
Therefore, under the regularity assumptions of Lemma~\ref{lem:elliptic_projection}, we have
\(
\|\bar{\bm{\theta}}^{\,n+\frac12}\|_a
\le
C\Lambda^{-\frac12}H.
\)
Combining this bound and the fact that $\Lambda\ge 1$ gives
\[
\max_{0\le n\le N_T-1}
\left\|
\bar\sigma_h^{\,n+\frac12}
-
\bar\sigma_{\mathrm{ms}}^{\,n+\frac12}
\right\|_A
\le
C
\bigl(
\Lambda^{-\frac12}H+\tau^2
\bigr).
\]
This completes the proof.
\end{proof}

\section{Numerical experiments}
\label{Numerical experiments}
In this section, we consider two examples to show the convergence of the proposed multiscale method. The computational domain $\Omega$ is set to be $(0,1)^2$, homogeneous
Dirichlet boundary conditions and zero initial displacement and velocity are
used in both experiments.  The density is fixed at $\rho=1$, and the elastic
parameters are determined from the P-wave speed by
\begin{equation}
  v_s=0.6v_p,
  \qquad
  \mu=\rho v_s^2,
  \qquad
  \lambda=\rho\bigl(v_p^2-2v_s^2\bigr).
  \label{eq:wave-speed-parameters}
\end{equation}
The source is a localized directional pulse centered at
$\boldsymbol{x}_0=(0.5,0.5)$:
\begin{equation}
  \boldsymbol{f}(t,\boldsymbol{x})
  = \frac{t-2/f_0}{4h^2}
    \exp\!\left[-\pi^2 f_0^2(t-2/f_0)^2\right]
    \exp\!\left[-\frac{|\boldsymbol{x}-\boldsymbol{x}_0|^2}{4h^2}\right]
    \boldsymbol{d},
  \qquad
  \boldsymbol{d}
  = \begin{pmatrix}\cos(\pi/3)\\ \sin(\pi/3)\end{pmatrix}.
  \label{eq:elastic-wave-source}
\end{equation}
We set $f_0=20$, $T=0.45$, and use a uniform $1000\times1000$ fine
partition, so that $h=10^{-3}$.  A centered second-order difference with
time step $\tau=10^{-4}$ is used for the fine-grid reference and multiscale
solutions.  The reference solution is computed with the fine-grid
multipoint stress discretization of~\cite{Fu2025}.  Twelve local spectral
functions are retained in every coarse element, and the number of
oversampling layers follows the logarithmic localization rule:
\begin{equation}
  m=\left\lceil\frac{4\log(1/H)}{\log 8}\right\rceil.
  \label{eq:oversampling-rule}
\end{equation}
The reported errors are the relative counterparts of the estimates proved in
Theorems~\ref{estimate_for_u} and~\ref{estimate_for_sigma}.  Using the midpoint averages introduced there, we set
\begin{equation}
  e_{\rho}
  =\frac{\displaystyle\max_n
    \|\overline{\boldsymbol{u}}_h^{n+1/2}
    -\overline{\boldsymbol{u}}_{\mathrm{ms}}^{n+1/2}\|_{\rho}}
  {\displaystyle\max_n
    \|\overline{\boldsymbol{u}}_h^{n+1/2}\|_{\rho}},
  \qquad
  e_{\sigma}
  =\frac{\displaystyle\max_n
    \|\overline{\boldsymbol{\sigma}}_h^{n+1/2}
    -\overline{\boldsymbol{\sigma}}_{\mathrm{ms}}^{n+1/2}\|_{A}}
  {\displaystyle\max_n
    \|\overline{\boldsymbol{\sigma}}_h^{n+1/2}\|_{A}}.
  \label{eq:numerical-relative-errors}
\end{equation}
Thus $e_{\rho}$ corresponds to the displacement estimate in Theorem~\ref{estimate_for_u},
whereas $e_{\sigma}$ corresponds to the second recovered-stress estimate in
Theorem~\ref{estimate_for_sigma}.  The observed rate between two successive coarse grids is
$\log(e_H/e_{H/2})/\log 2$.

\subsection{Binary scattering medium}

The first coefficient field is the binary scattering medium
\begin{equation}
  v_p=1+0.4\chi, \qquad \chi\in\{0,1\},
  \label{eq:binary-wave-speed}
\end{equation}
where $\chi=0$ represents the background phase and $\chi=1$ the
high-velocity phase.  Thus $v_p$ takes the values $1$ and $1.4$, with spatial
mean $1.3693712$.  Figure~\ref{fig:scatter-model} shows the medium and the
source location.

\begin{figure}[H]
  \centering
  \includegraphics[width=0.52\textwidth]{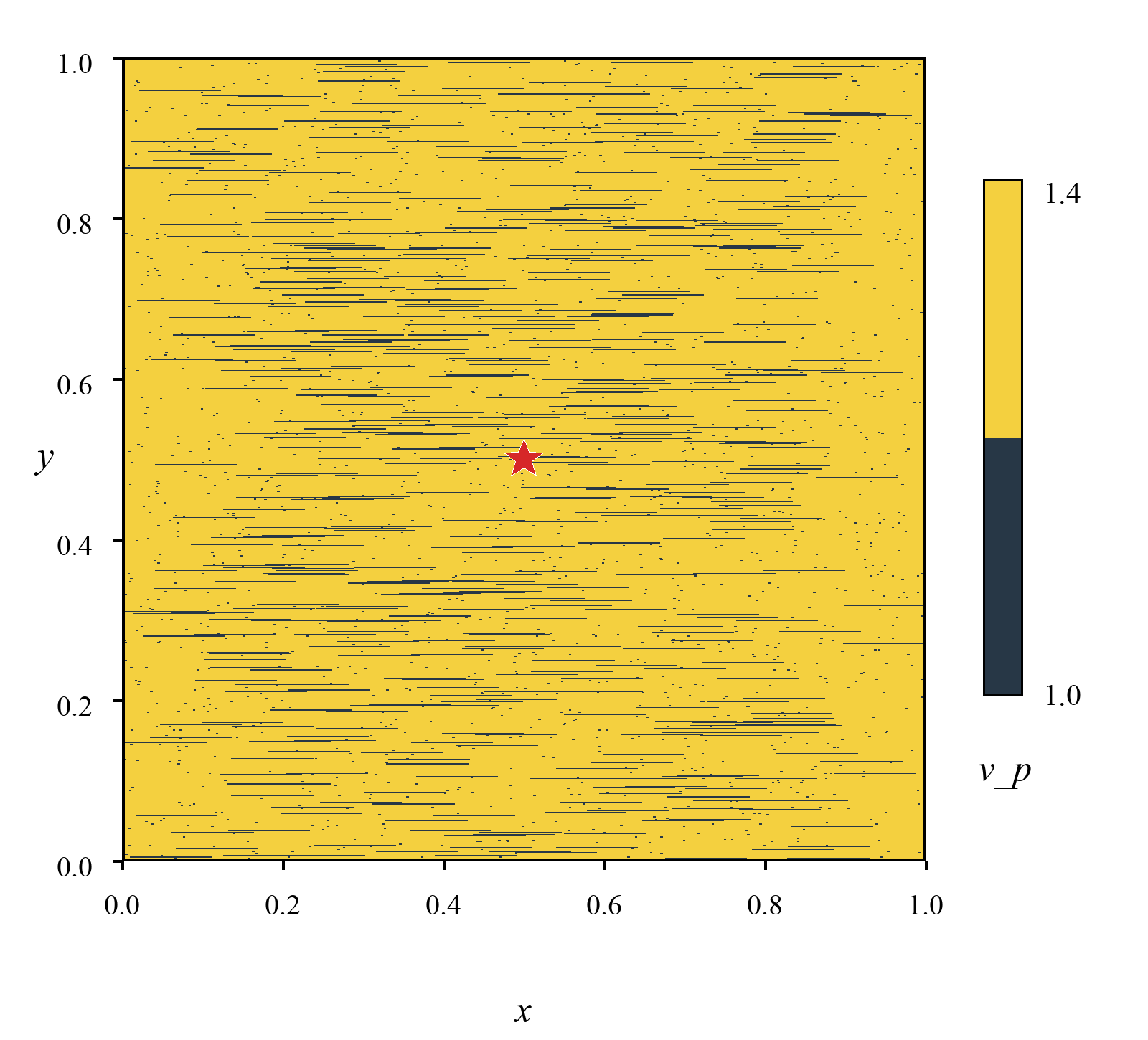}
  \caption{P-wave speed for the binary scattering medium.  The star marks the
  source location $\boldsymbol{x}_0=(0.5,0.5)$.}
  \label{fig:scatter-model}
\end{figure}

Table~\ref{tab:binary-convergence} reports the errors for four successive
coarse partitions.  Only $H$ and the corresponding oversampling size vary.

\begin{table}[H]
  \centering
  \caption{Coarse-mesh convergence for elastic-wave propagation in the
  heterogeneous scattering medium.  Twelve spectral functions are retained
  in each coarse block.}
  \label{tab:binary-convergence}
  \setlength{\tabcolsep}{9pt}
  \begin{tabular}{ccccccc}
    \toprule
    $H/h$ & $H$ & $m$ & $e_{\rho}$ & rate & $e_{\sigma}$ & rate \\
    \midrule
    40 & $1/25$ & 7 & $7.826\times10^{-1}$ & --
       & $9.507\times10^{-1}$ & -- \\
    20 & $1/50$ & 8 & $3.621\times10^{-1}$ & 1.11
       & $6.101\times10^{-1}$ & 0.64 \\
    10 & $1/100$ & 9 & $7.381\times10^{-2}$ & 2.29
       & $1.512\times10^{-1}$ & 2.01 \\
     5 & $1/200$ & 11 & $1.241\times10^{-2}$ & 2.57
       & $3.917\times10^{-2}$ & 1.95 \\
    \bottomrule
  \end{tabular}
\end{table}

Both error measures decrease consistently under coarse-grid refinement.  The
coarsest partition is pre-asymptotic, with observed rates $1.11$ and $0.64$
for displacement and recovered stress, respectively.  Once the propagating
wave field is better resolved, the rates increase substantially.  On the
final two refinements, the displacement rates are $2.29$ and $2.57$, while
the recovered-stress rates are $2.01$ and $1.95$.  At $H/h=5$, the
maximum-in-time relative errors are $1.24\%$ in the density-weighted
displacement norm and $3.92\%$ in the recovered-stress norm.  The results
demonstrate that localization with the logarithmic oversampling rule
preserves the coarse-grid convergence of the multiscale approximation for
this heterogeneous elastic-wave problem.

Figure~\ref{fig:binary-wave-components} compares the displacement components of the
reference and multiscale solutions at the final time on the finest coarse
partition.  For each component, the two panels use the same symmetric color
scale.  The multiscale solution captures the principal wave fronts together
with the fine-scale oscillations induced by the heterogeneous medium, while
retaining the phase and amplitude of the reference field.

\begin{figure}[H]
  \centering
  \includegraphics[width=0.84\textwidth]{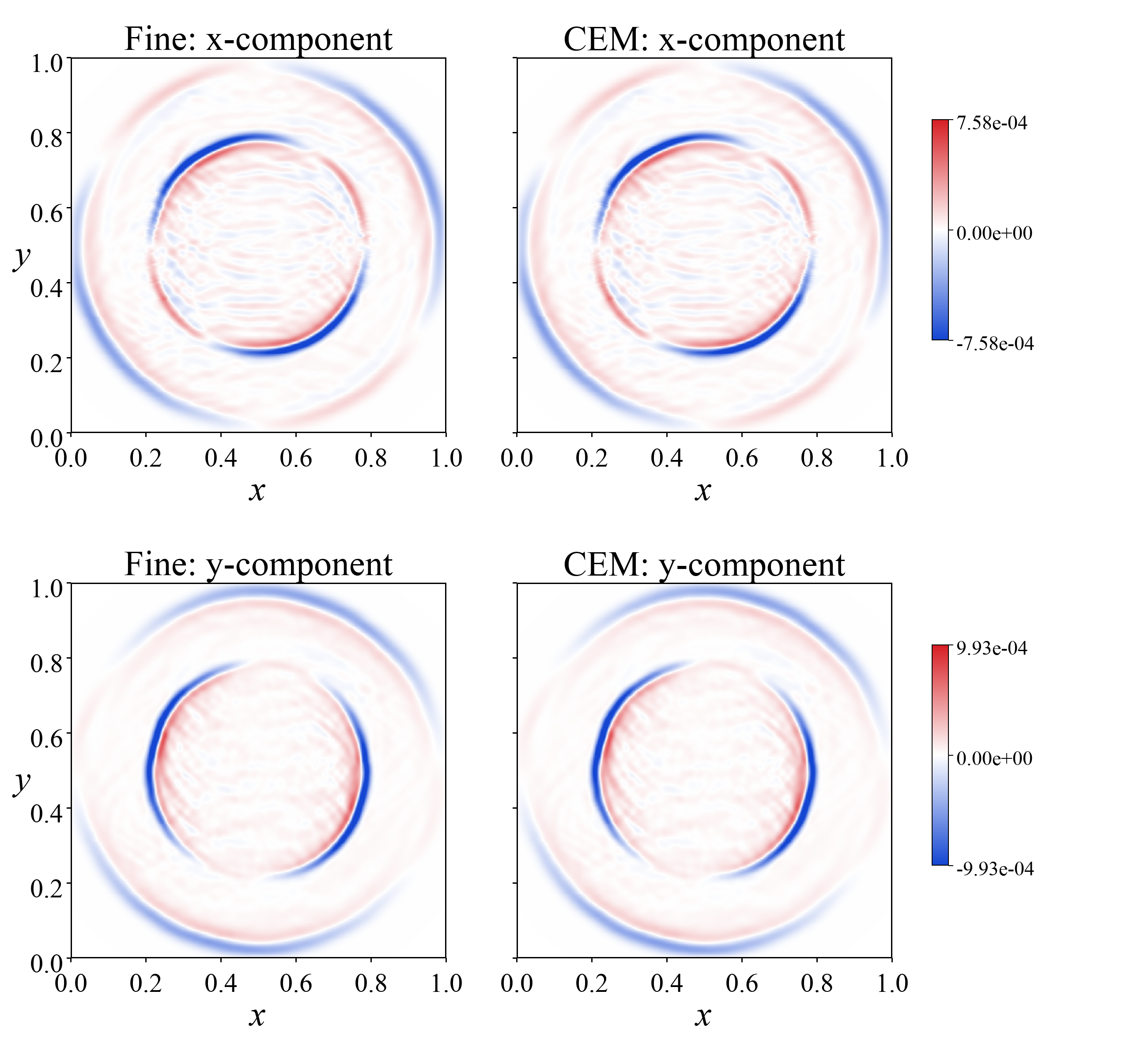}
  \caption{Displacement components at $T=0.45$.  Left: fine-grid reference
  solution.  Right: CEM solution with $H/h=5$, $m=11$, and 12 local spectral
  functions per coarse block.  Each row uses a common symmetric color scale;
  the display limit is the larger 99.5th percentile of the corresponding
  fine and CEM amplitudes.}
  \label{fig:binary-wave-components}
\end{figure}

\subsection{Strong correlated random medium}

We next assess the method in a continuous random medium with substantially
larger velocity variation.  Let $g$ be a stationary, mean-zero,
unit-variance Gaussian random field with correlation length $0.025$.  For the
realization used here, generated with a fixed seed, we define
\begin{equation}
  v_p(\boldsymbol{x})
  =0.8+\frac{1.2}{1+\exp\{-2[g(\boldsymbol{x})-0.110557]\}}.
  \label{eq:random-wave-speed}
\end{equation}
The shift in \eqref{eq:random-wave-speed} is selected so that the mean
P-wave speed remains $1.3693712$, matching the binary experiment.  The
realized field satisfies
\[
  0.8009\le v_p\le1.9968,
  \qquad \operatorname{std}(v_p)=0.3787,
\]
and has a velocity ratio of approximately $2.49$.  Figure~\ref{fig:random-model}
shows the coefficient field.  In contrast to the binary medium, the
heterogeneity fills the entire domain and contains connected slow and fast
regions over a range of resolved spatial scales.

\begin{figure}[H]
  \centering
  \includegraphics[width=0.52\textwidth]{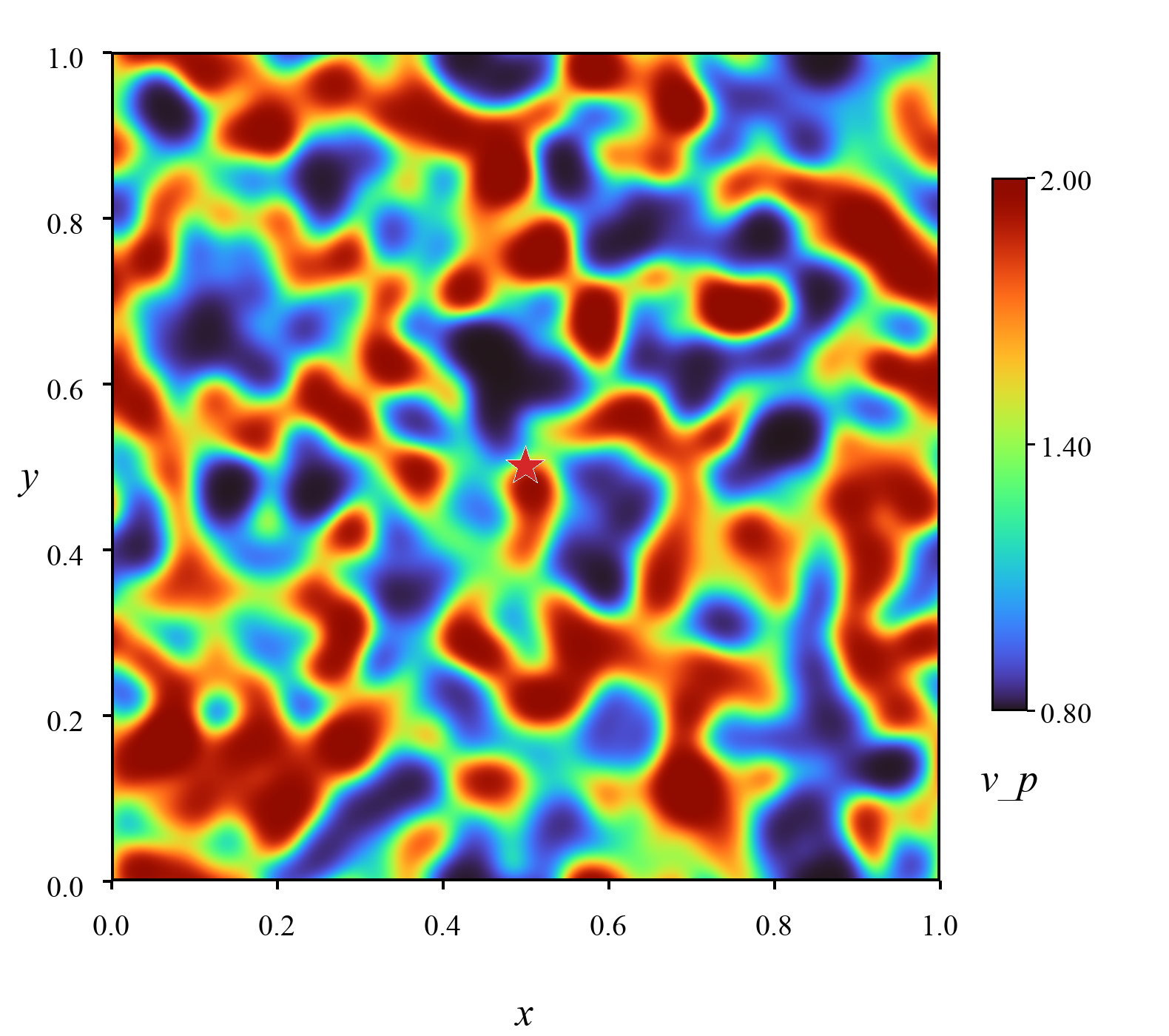}
  \caption{P-wave speed for the strong correlated random medium.  The star
  marks the source location $\boldsymbol{x}_0=(0.5,0.5)$.}
  \label{fig:random-model}
\end{figure}

All discretization and source parameters are unchanged.  Table~\ref{tab:random-convergence}
shows that the stronger coefficient variation delays the asymptotic regime:
the two coarsest spaces do not yet resolve the multiple scattering produced
by the random medium.  Nevertheless, refinement from $H/h=20$ to $10$
reduces the displacement and recovered-stress errors at rates $2.03$ and
$1.74$, respectively.  On the final refinement both observed rates are
approximately $2.75$.

\begin{table}[H]
  \centering
  \caption{Coarse-mesh convergence for the strong correlated random medium.
  Twelve spectral functions are retained in every coarse block.}
  \label{tab:random-convergence}
  \setlength{\tabcolsep}{9pt}
  \begin{tabular}{ccccccc}
    \toprule
    $H/h$ & $H$ & $m$ & $e_{\rho}$ & rate & $e_{\sigma}$ & rate \\
    \midrule
    40 & $1/25$  & 7  & $9.271\times10^{-1}$ & --
       & $1.084\times10^{0}$  & -- \\
    20 & $1/50$  & 8  & $5.612\times10^{-1}$ & 0.72
       & $8.889\times10^{-1}$ & 0.29 \\
    10 & $1/100$ & 9  & $1.371\times10^{-1}$ & 2.03
       & $2.668\times10^{-1}$ & 1.74 \\
     5 & $1/200$ & 11 & $2.041\times10^{-2}$ & 2.75
       & $3.943\times10^{-2}$ & 2.76 \\
    \bottomrule
  \end{tabular}
\end{table}

At $H/h=5$, the maximum-in-time errors are $2.04\%$ in the
density-weighted displacement norm and $3.94\%$ in the recovered-stress
norm.  No error plateau is observed over the tested range of coarse meshes.

Figure~\ref{fig:random-wave-components} compares the fine and multiscale
solutions on the finest coarse partition.  The multiscale field closely
reproduces the strongly distorted wave fronts, localized focusing, and
multiple-scattering patterns of the reference solution.

\begin{figure}[H]
  \centering
  \includegraphics[width=0.74\textwidth]{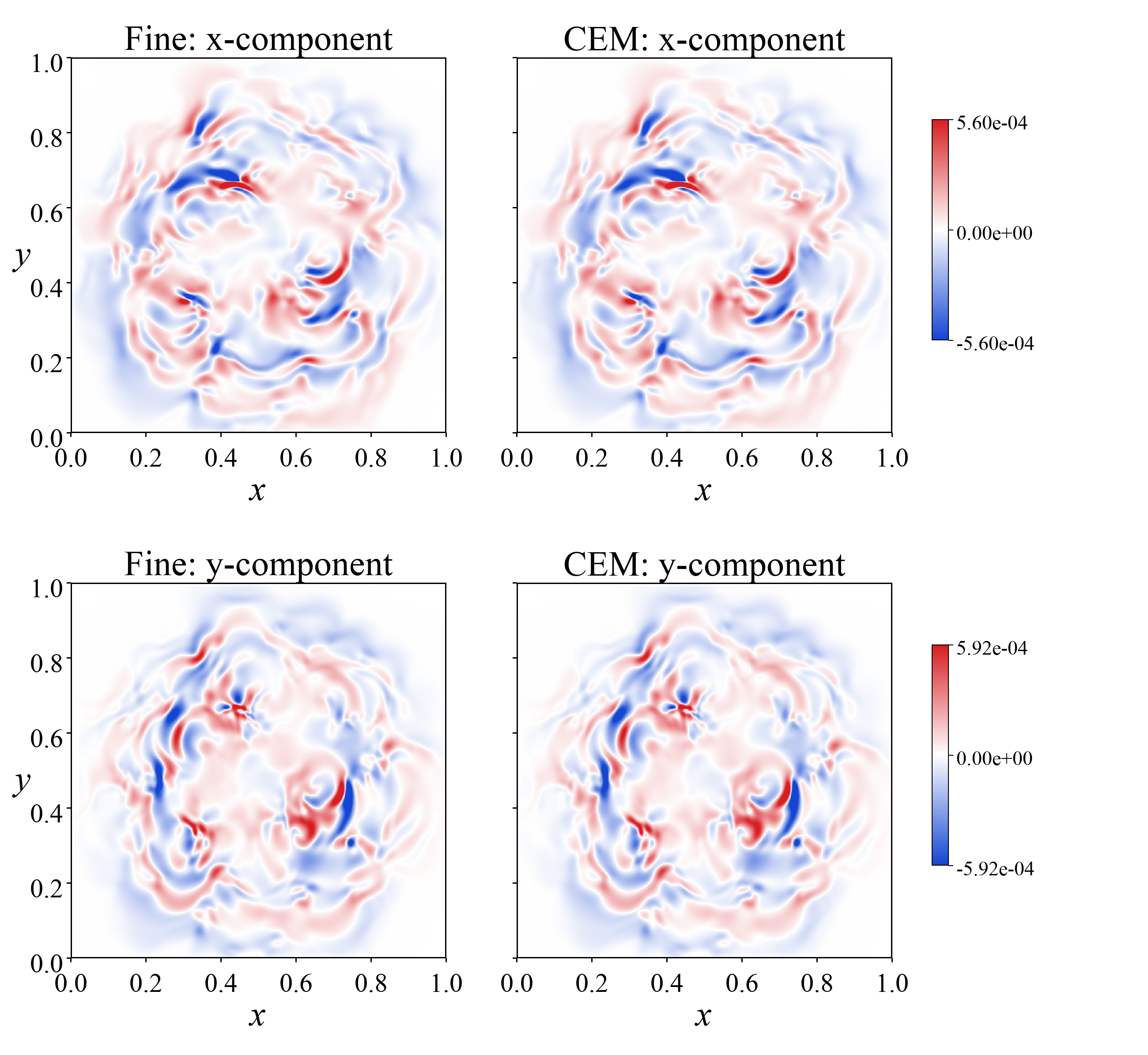}
  \caption{Strong random medium: displacement components at $T=0.45$.
  Left: fine-grid reference solution.  Right: CEM solution with $H/h=5$,
  $m=11$, and 12 local spectral functions per coarse block.  Each row uses a
  common symmetric color scale based on the larger 99.5th percentile of the
  fine and CEM amplitudes.}
  \label{fig:random-wave-components}
\end{figure}

\section{Conclusions}
\label{conclusions}
In this work, we developed a time explicit multiscale framework for
elastic wave propagation in heterogeneous media by combining local multipoint
stress condensation with a density-weighted localized multiscale reduction. The
resulting formulation evolves only the cell-centered displacement globally,
retains inexpensive local recovery of the fine-scale stress, and produces an
identity coarse mass matrix that enables explicit time stepping without global
linear solves. Theoretical stability and convergence results, together with
numerical experiments in binary scattering and strongly correlated random media,
confirm the accuracy and effectiveness of the proposed method.


\section*{Declaration of competing interest}

The authors declare that they have no known competing financial interests or personal relationships that could have appeared
to influence the work reported in this paper.

\section*{Declaration of Generative AI and AI-assisted technologies in the writing process}

During the preparation of this work the authors used ChatGPT in order to improve readability and language. After
using this tool, the authors reviewed and edited the content as needed and take full responsibility for the content of the
publication.

\section*{Acknowledgments}
Eric Chung’s research is partially supported by the Hong Kong RGC General Research Fund Projects 14304525 and 14305624. 

\bibliographystyle{siamplain}

\end{document}